\documentclass[12pt]{article}

\usepackage{amsmath,amsthm,amsfonts,amssymb, color,xcolor,subcaption,graphicx} 

\usepackage[normalem]{ulem}
\usepackage{todonotes, dsfont}

\newtheorem{theorem}{Theorem}[section]

\newtheorem{proposition}[theorem]{Proposition}
\newtheorem{lemma}[theorem]{Lemma}

\def\cB{\mathcal{B}}
\def\cC{\mathcal{C}}

\def\cF{\mathcal{F}}
\def\cG{\mathcal{G}}
\def\cH{\mathcal{H}}
\def\cI{\mathcal{I}}

\def\cL{\mathcal{L}}

\def\cN{\mathcal{N}}

\def\cZ{\mathcal{Z}}

\def\bE{\mathbb{E}}

\def\bN{\mathbb{N}}
\def\bP{\mathbb{P}}
\def\bR{\mathbb{R}}

\newcommand{\les}{\lesssim}

\newcommand{\fm}{\mathfrak{m}}

\begin{document}

\title{Gaussian fluctuations for the parabolic Anderson model with L\'evy white noise}

\author{Raluca M. Balan\footnote{Corresponding author. University of Ottawa, Department of Mathematics and Statistics, 150 Louis Pasteur Private, Ottawa, Ontario, K1N 6N5, Canada. E-mail address: rbalan@uottawa.ca.} \footnote{Research supported by a grant from the Natural Sciences and Engineering Research Council of Canada.}
\and
Matis Le Gall\footnote{DER de math\'ematiques, ENS Paris-Saclay, Gif-sur-Yvette, France. E-mail address: matis.le\_gall@ens-paris-saclay.fr. }
\and
Jinxin Wang\footnote{University of Ottawa, Department of Mathematics and Statistics, 150 Louis Pasteur Private, Ottawa, Ontario, K1N 6N5, Canada. E-mail address: jwang023@uottawa.ca.}
}

\date{July 2, 2026}
\maketitle

\begin{abstract}
\noindent
In this article, we consider the parabolic Anderson model driven by a L\'evy white noise with finite variance in dimension 1, and we study the asymptotic behaviour of the spatial average of the solution. The main result shows that, with appropriate normalization and centering, the spatial integral converges in distribution to the standard normal distribution, and gives an estimate for the rate of this convergence in the Wasserstein distance, the Kolmogorov distance, and the Fortet-Mourier distance. We also prove the functional limit theorem corresponding to this result.
\end{abstract}

\noindent {\em MSC 2020:} Primary 60H15; Secondary 60F05, 60G60, 60G51

\vspace{1mm}

\noindent {\em Keywords:} stochastic partial differential equations, random fields, Malliavin calculus, Poisson random measure, L\'evy noise



\section{Introduction}

In the recent years, there has been an increased interest in the literature in examining the asymptotic behaviour as $R\to \infty$ of the spatial average:
\begin{equation}
\label{def-FR}
F_R(t)=\int_{|x|<R} \big(u(t,x)-1\big)dx
\end{equation}
associated with the solution $u$ of a stochastic partial differential equation (SPDE):
\begin{equation}
\label{spde}
\cL u(t,x)=\sigma\big(u(t,x)\big) \dot{W}(t,x), \quad t>0,x\in \bR^d,
\end{equation}
with initial condition 1, where $W$ is a Gaussian noise, $\cL$ is a second-order partial differential operator (usually the heat or the wave operator), and $\sigma$ is a Lipschitz function. The main result in this area, called the {\em Quantitative Central Limit Theorem}, estimates the total variation distance $d_{TV}$ between $F_R(t)/\sigma_R(t)$ and the standard normal distribution, where $\sigma_R^2(t)={\rm Var}(F_R(t))$.   

\medskip

We recall that a (mild) solution of \eqref{spde} is a predictable process $u=\{u(t,x);t\geq 0,x\in \bR^d\}$ which satisfies the integral equation:
\begin{equation}
\label{mild-solution}
u(t,x)=1+\int_0^t \int_{\bR^d} G_{t-s}(x-y) \sigma(u(s,y)) W(ds,dy),
\end{equation}
where $G$ is the fundamental solution of $\cL u=0$.

This line of research has been initiated in the landmark article \cite{HNV20} for the stochastic heat equation (SHE) driven by a space-time Gaussian white noise in dimension 1, and has been subsequently extended to other models. We refer the reader to \cite{BNZ,CKNP22-JFA,CKNP22-AIHP,DNZ,Ebina22,HNVZ,NSZ,NXZ,NZ20,NZ22} for a sample of relevant references.

\medskip

At the core of these investigations lies the following connection between Malliavin calculus and Stein's method for normal approximation: if $F$ is a random variable which can be expressed as a Skorohod integral of a process $v$, with respect to an isonormal Gaussian process $\{W(h)\}_{h \in \cH}$, then
\[
d_{TV}\left(\frac{F}{\sigma},Z\right) \leq \sqrt{{\rm Var} \langle DF,v \rangle_{\cH}},
\]
where $\sigma^2={\rm Var}(F)$ and $D$ is the Malliavin derivative.
In the context of spatial averages related to SPDE mentioned above (i.e. when $F=F_R(t)$), there are two methods for estimating the variance above: 
\begin{description}
\item[(i)] One can use as integrand a process $v$ which arises naturally from the mild formulation \eqref{mild-solution} of the solution. 
\item[(ii)] Alternatively, one can use the intrinsic integrand $v=-DL^{-1} F$ associated with $F$, where $L^{-1}$ is the pseudo-inverse of the Ornstein-Uhlenbeck generator, and then estimate the above variance using the second-order Poincar\'e inequality, which involves the second order Malliavin derivative of $u$. 
\end{description}
Method {\bf (i)} was used in all references mentioned above with Gaussian noise which is colored in space but white in time, while method {\bf (ii)} was used in \cite{BNQZ} for Gaussian noise which is colored in time, and in \cite{BS26} for (SHE) with drift. Both methods rely on the following key estimate for the $p$-th moment of Malliavin derivative of the solution: for any $0<r<t\leq T$, $x,z \in \bR^d$, and $p\geq 2$,
\begin{equation}
\label{key-G}
\|D_{r,z} u(t,x)\|_p \leq C_T G_{t-r} (x-z).
\end{equation}

When $\sigma(u)=u$, $G_{t-r}(x-z)$ is the mean of $D_{r,z}u(t,x)$, which is the first term which appears in its chaos expansion. To show that this term dominates the higher order terms, one calculates the second moment of each term, and then uses hypercontractivity to bound the $p$-th moment by the second moment. 
In the case of a general Lipschitz function $\sigma$, there are other methods for proving \eqref{key-G}. If the noise is white in time, one can write the equation satisfied by $Du$, and then use the Burkholder-Davis-Gundy (BDG) inequality to produce an expression which is amenable to iterations. Another method is to use the product representation:
\begin{equation}
\label{D-prod}
D_{r,z}u(t,x)=\sigma\big(u(r,z)\big) v^{(r,z)}(t,x),
\end{equation}
where the process $\{v^{(r,z)}(t,x);t\geq r,x\in \bR^d\}$ satisfies the integral equation:
\[
v^{(r,z)}(t,x)=G_{t-r}(x-z)+\int_r^t \int_{\bR^d} G_{t-s}(x-y) \sigma'\big( u(s,y)\big) v^{ (r,z)} (s,y) W(ds,dy),
\]
for any $r\geq 0$ and $z \in \bR^d$, and then show that
\[
\|v^{(r,z)}(t,x)\|_p \leq C_T G_{t-r} (x-z),
\]
for all $0\leq r\leq t \leq T$ and $x,z \in \bR^d$. The function $v^{(r,z)}(t,x)=Z(t,x|r,z)$ has been extensively studied in \cite{alberts}
in the case of heat equation with $\sigma(u)=u$, called the parabolic Anderson model (pAm), driven by space-time Gaussian noise in dimension 1. 

\medskip
In article \cite{BZ24}, the QCLT problem was studied for the first time for an SPDE driven by a non-Gaussian process, namely the stochastic wave equation (SWE) with $\sigma(u)=u$ (called the hyperbolic Anderson model), driven by a finite-variance {\em L\'evy white noise} in dimension $d=1$. 
We recall that this noise is given by a collection 
$\{L(\varphi); \varphi\in L^2(\bR_+\times\bR)\}$ of random variables,  defined on a complete probability space $(\Omega,\cF,\bP)$, given by:
\begin{equation}
\label{def-L}
    L(\varphi)=\int_{\bR_+\times\bR\times\bR_0} \varphi(t,x)z \widehat{N}(dt,dx,dz),
\end{equation}
where $N$ is a Poisson random measure on the space
$\bR_+\times\bR\times\bR_0$ with intensity
$dtdx\nu(dz)$, $\widehat{N}$ is the compensated version of $N$, and the measure $\nu$ satisfies: 
\begin{equation}
\label{m2-cond}
m_2:=\int_{\mathbb R_0}|z|^2\nu(dz)<\infty.
\end{equation}
The space $\bR_0=\bR \verb2\2 \{0\}$ is equipped with the distance $d(x,y)=|x^{-1}-y^{-1}|$.

\medskip

The QCLT of \cite{BZ24} was proved using Malliavin calculus on the Poisson space, and a version of the second-order Poincar\'e inequality for this framework, due to \cite{trauthwein25}. At the core of this method lies an estimate for the Malliavin derivative, which is similar to \eqref{key-G}:
\begin{equation}
\label{key-Poisson}
\|D_{r,y,z}u(t,x)\|_p \leq C_T |z|G_{t-r}(x-y),
\end{equation}
for any $0\leq r<t\leq T$, $x,y \in \bR$ and $z\in \bR_0$, where $G_t(x)=\frac{1}{2}1_{\{|x|<t\}}$ is the fundamental solution of the wave equation in dimension 1, and $p\geq 2$ is such that 
\[
m_p:=\int_{\bR_0}|z|^p  \nu(dz)<\infty.
\]
The proof of \eqref{key-Poisson}  uses a product decomposition similar to \eqref{D-prod}, and relies on an application of Rosenthal's inequality, for which it is essential that 
$G_t^p(x)=c_p G_t(x)$. 
The QCLT of \cite{BZ24} has been recently extended in \cite{BZ26} to a general Lipschitz function $\sigma$. 
But the methods of \cite{BZ24,BZ26} do not work for the heat equation.

\medskip

The goal of this article is to fill this gap, and prove the QCLT for (pAm) with finite variance L\'evy white noise. More precisely, we consider:
\begin{equation}
\label{pAm} 
\begin{cases}
\displaystyle 
\frac{\partial u}{\partial t}(t,x)=\frac{1}{2}\frac{\partial^2 u}{\partial x^2}(t,x)+u(t,x)\dot{L}(t,x), \
 t>0,\ x\in\bR, \\
u(0,x)=1, \ x\in\bR.
\end{cases}
\end{equation}
where $L$ is the L\'evy white noise given by \eqref{def-L} and the measure $\nu$ satisfies \eqref{m2-cond}.
The solution of \eqref{pAm} satisfies the integral equation \eqref{mild-solution}, with $\sigma(u)=u$ and $W$ replaced by $L$, in which $G_t$ is the heat kernel:
\[
G_t(x)=\frac{1}{\sqrt{2\pi t}} \exp\left(-\frac{x^2}{2t}\right), \quad t>0,x \in \bR.
\]

In \cite{BN17}, it was proved that equation \eqref{pAm} has a unique solution which satisfies:
\begin{equation}
\label{mom-u}
\sup_{(t,x) \in [0,T] \times \bR} \bE|u(t,x)|^2<\infty.
\end{equation}

We will first prove that $u(t,x)$ has finite $p$-th moments, for any $p \in [2,3)$ such that $m_p<\infty$.
The restriction $p<3$ cannot be lifted, since it comes from the requirement:
\[
\int_0^t \int_{\bR}G_{t-s}^p(x-y)dyds<\infty,
\]
which is the necessary and sufficient condition for $\bE|I_1(t,x)|^p<\infty$, where $I_1(t,x)=\int_0^t \int_{\bR}G_{t-s}(x-y)L(ds,dy)$ is the projection of $u(t,x)$ on the first Poisson chaos space.

\medskip

More importantly, we will show that for any $p\in [2,3)$ such that $m_p<\infty$,
\begin{equation}
\label{key2}
\|D_{r,y,z}u(t,x)\|_p \leq C_T |z| \big(G_{t-r}(x-y)+G_{t-r}^{2/p}(x-y) \big),
\end{equation}
for any $0\leq r<t \leq T$, $x,y\in \bR$ and $z \in \bR_0$. This estimate will play a key role for our developments. Its proof is based on iteration. In the Poisson case, instead of BDG inequality, we use Rosenthal's inequality. This produces two terms: the first term is the same as in the BDG inequality, while the second one involves moments of order $p$. In the $n$-th iterative step of the proof of \eqref{key2}, a careful analysis of the resulting product of $n$ factors (each being a sum of two terms), shows that by a small miracle, all singularities eventually disappear to produce the desired upper bound.

\medskip

We are now ready to state the main results of this article. 
We denote by $\|\cdot\|_p$ the norm in $L^p(\Omega)$ for any $p\geq 1$.
We let $F_R(t)$ be the spatial average given by \eqref{def-FR} of the solution $u$ of \eqref{pAm}, and $\sigma_R^2(t)={\rm Var}(F_R(t))$.

\begin{theorem}
\label{ergodic-cov}
a) {\rm (Ergodicity)} For any $t>0$, $\{u(t,x)\}_{x\in \bR}$ is strictly stationary and ergodic. 
Consequently, by the mean ergodic theorem,
\[
\lim_{R \to \infty}\frac{1}{R}F_R(t) =0 \quad \mbox{a.s. and in $L^2(\Omega)$}.
\]
b) {\rm (Limiting covariance)} For any $t,s>0$,
\begin{equation}
\label{limit-cov}
\lim_{R \to \infty}\frac{1}{R}\bE[F_R(t) F_R(s)]=\Sigma_{t,s}:=4 m_2\int_0^{t \wedge s}e^{\frac{m_2^2 r}{4}}\Phi\left(m_2 \sqrt{\frac{r}{2}} \right)dr,
\end{equation}
where $\Phi(x)=\frac{1}{\sqrt{2\pi}} \int_{-\infty}^x e^{-y^2/2}dy$.
In particular, $\sigma_R^2(t) \sim \Sigma_{t,t} R$ as $R \to \infty$, for any $t>0$.
\end{theorem}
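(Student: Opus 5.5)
For part a), the plan is to obtain stationarity from the invariance in law of the noise under spatial shifts, and ergodicity from a mixing argument. For $a\in\bR$, the shifted measure $N_a(A)=N(\{(t,x+a,z):(t,x,z)\in A\})$ is again a Poisson random measure with intensity $dtdx\nu(dz)$. By uniqueness of the solution (from \cite{BN17}), $u(t,\cdot+a)$ is the solution driven by $N_a$. Since $u(t,\cdot)$ is a measurable functional of $N$ (for instance through its Picard iteration limit, or through its Poisson chaos expansion), the finite-dimensional laws are shift invariant.

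For ergodicity, I would show that the shift is mixing on the $\sigma$-field $\sigma(N)$. It suffices to prove $\bP(A\cap\theta_a^{-1}B)\to\bP(A)\bP(B)$ for events $A,B$ depending on $N$ restricted to bounded sets $[0,T]\times[-K,K]\times\{|z|>\epsilon\}$. For $|a|>2K$, the two restrictions of the Poisson measure live on disjoint sets, so they are independent. A density argument then extends mixing to all of $\sigma(N)$. Mixing implies ergodicity of the factor $\{u(t,x)\}_x$.

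An alternative for ergodicity is the criterion $\frac1R\int_{-R}^R\mathrm{Cov}(u(t,0),u(t,x))\,dx\to0$ via the Poincaré inequality on Poisson space. Here $|\mathrm{Cov}(F,G)|\le\int \|D F\|_2\|DG\|_2$, applied to $F=g(u(t,x_1),\dots)$, together with bound \eqref{key2} for $p=2$. I prefer the direct mixing argument, which needs no derivative bounds. The mean ergodic theorem then gives $F_R/R\to\bE u(t,0)-1=0$ both a.s. and in $L^2$, using $\bE u=1$ from the mild form.

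For part b), the plan is to compute the covariance explicitly. From the mild formulation, with $\sigma(u)=u$, and the Itô isometry for the compensated Poisson integral with intensity $m_2\,drdy$, we get
\[
\rho_{t,s}(x-x'):=\bE[(u(t,x)-1)(u(s,x')-1)]=m_2\int_0^{t\wedge s}\int_\bR G_{t-r}(x-y)G_{s-r}(x'-y)\bE[u(r,y)^2]\,dydr.
\]
By stationarity, $\bE[u(r,y)^2]=:g(r)$ does not depend on $y$. Setting $x=x'$ and $t=s=r$ gives the Volterra equation
\[
g(r)=1+m_2\int_0^r G_{2(r-\tau)}(0)\,g(\tau)\,d\tau=1+m_2\int_0^r\frac{g(\tau)}{\sqrt{4\pi(r-\tau)}}\,d\tau.
\]
This is the same renewal equation as for the Gaussian pAm with noise intensity $m_2$. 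I would solve it by Laplace transform, or by iterating the series, whose $n$-th term is $m_2^n r^{n/2}/(2^n\Gamma(n/2+1))$ (the Mittag-Leffler function $E_{1/2}$). This yields $g(r)=2e^{m_2^2r/4}\Phi(m_2\sqrt{r/2})$. Checking this identity is the only real computation, and I would verify it via $E_{1/2}(z)=e^{z^2}\mathrm{erfc}(-z)$ with $z=m_2\sqrt r/2$.

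Then
\[
\frac1R\bE[F_R(t)F_R(s)]=\frac1R\int_{|x|,|x'|<R}\rho_{t,s}(x-x')\,dxdx'\to2\int_\bR\rho_{t,s}(w)\,dw,
\]
by dominated convergence. This step requires $\rho_{t,s}\in L^1$, which holds because $\rho_{t,s}\ge0$ and $\int\rho_{t,s}=m_2\int_0^{t\wedge s}g(r)\,dr<\infty$, using $\int\!\int G_{t-r}(x-y)G_{s-r}(x'-y)\,dx\,dy=1$ after integrating in $x$. Hence the limit equals $2m_2\int_0^{t\wedge s}g(r)\,dr=4m_2\int_0^{t\wedge s}e^{m_2^2r/4}\Phi(m_2\sqrt{r/2})\,dr$, as claimed. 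The main obstacle is the rigorous ergodicity step, specifically the density and measurability argument that makes $u(t,\cdot)$ a shift-covariant functional of $N$. The closed-form solution of the Volterra equation is routine but must be checked carefully.
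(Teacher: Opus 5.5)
Your proposal is correct. Part a) takes a genuinely different route from the paper, and part b) is a more direct version of the paper's computation.

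\textbf{Part a).} For ergodicity, the paper invokes the Poincar\'e-type criterion of Lemma 4.2 in \cite{BZ24}, verified with the derivative bound \eqref{Du-pth-moment} for $p=2$. This is essentially the alternative you mention and set aside. Your mixing argument has three steps: exact independence of the restrictions of $N$ to disjoint sets of finite intensity, approximation within $\sigma(N)$, and passage to the factor $\{u(t,x)\}_{x}$. It uses no Malliavin calculus and yields the stronger property of mixing. Its price is the shift-covariance bookkeeping you flag, namely $u(t,x+a)=\Psi_{t,x}(\theta_a N)$ a.s. for one measurable map $\Psi_{t,x}$. This is most easily read off from the chaos expansion \eqref{series}, since $F_n(\cdot,t,x+a)$ is a spatial translate of $F_n(\cdot,t,x)$; the paper needs the same fact for stationarity anyway. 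What the paper's route buys is that it reuses an estimate it must prove for the QCLT regardless.

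\textbf{Part b).} The paper detours through the Gaussian model \eqref{pAm-Gauss}. It matches second moments chaos by chaos, applies the It\^o isometry to $H_R$, shows $\frac1R\int_{\bR}\varphi_{t,R}\varphi_{s,R}\,dy\to2$ via Plancherel, and sums the chaos series for $\bE|U(t,0)|^2$. You instead apply the It\^o isometry directly to the L\'evy mild equation. There the factor $m_2$ appears automatically, so no Gaussian comparison is needed. You then replace the Fourier computation by integrability of the nonnegative covariance function $\rho_{t,s}$ plus dominated convergence, which is more elementary. Finally, you obtain $g$ from a Volterra equation whose Neumann series is exactly the paper's series $\sum_{n\ge0}(m_2^2r/4)^{n/2}/\Gamma(n/2+1)$, so both routes give the same $\Sigma_{t,s}$.
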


For the next result, we recall that the {\em $1$-Wasserstein distance}, the {\em Fortet-Mourier distance}, and the {\em Kolmogorov distance}  between random variables $X$ and $Y$ defined on the same probability space, are given, respectively, by:
\begin{align*}
d_{W}(X,Y)  &=\sup_{{\rm Lip}(h) \leq 1} \big|\bE[h(X)]-\bE[h(Y)]\big|, \\
d_{FM}(X,Y) &=\sup_{\|h\|_{\infty}+{\rm Lip}(h) \leq 1} \big|\bE[h(X)]-\bE[h(Y)]\big|, \\
d_{K}(X,Y)  &=\sup_{x \in \bR} \big|\bP(X \leq x)-\bP(Y \leq x).\big|.
\end{align*}
where ${\rm Lip}(h):=\sup_{x \not=y}\frac{|h(x)-h(y)|}{|x-y|}$ is the Lipschitz constant of the function $h:\bR \to \bR$.

\begin{theorem}[Quantitative Central Limit Theorem]
\label{QCLT}
Suppose that there exists $p \in (1,\frac{3}{2})$ such that 
\begin{equation}
\label{mp-m2p}
m_{p}<\infty \quad \mbox{and} \quad m_{2p}<\infty.
\end{equation}
Then, for any $t>0$,
\[
{\rm dist}\left( \frac{F_R(t)}{\sigma_R(t)},Z\right) \leq  C_t R^{-(1-\frac{1}{p})}
\]
Here $C_t>0$ is a constant depending on $t$, $Z$ is a standard normal random variable and ${\rm dist}$ is the 1-Wasserstein distance, the Fortet-Mourier distance, or the Kolmogorov distance.
\end{theorem}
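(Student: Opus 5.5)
We plan to use the second-order Poincaré inequality on the Poisson space of Trauthwein \cite{trauthwein25}, in its version for the Wasserstein, Fortet--Mourier and Kolmogorov distances, applied to $F=F_R(t)/\sigma_R(t)$. That inequality bounds ${\rm dist}(F,Z)$ by a sum of terms $\gamma_1,\gamma_2,\gamma_3$ (plus, for Kolmogorov, a few additional terms of the same type). These terms involve integrals over $\xi_1=(r_1,y_1,z_1)$ and $\xi_2=(r_2,y_2,z_2)$ in $\bR_+\times\bR\times\bR_0$, with measure $dr\,dy\,\nu(dz)$. The integrands are mixed moments of $D_{\xi_1}F$, $D_{\xi_2}F$ and $D^2_{\xi_1,\xi_2}F$, raised to powers $p$ and $2p$ in the $L^p$-version (this is where $m_p$ and $m_{2p}$ enter), together with a third-moment type term $\int \bE|D_\xi F|^3$, or its $L^p$ analogue $\int \bE|D_\xi F|^{2p}\ldots$. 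The hypothesis $p\in(1,\tfrac32)$ is natural here: the moments needed are of order $2p<3$, and the key estimate \eqref{key2} is available exactly for exponents in $[2,3)$ with finite $m_{2p}$. By Theorem \ref{ergodic-cov}(b), $\sigma_R^2(t)\asymp R$, so it suffices to bound the unnormalized integrals of $D F_R$ and $D^2F_R$.

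\textbf{Step 1 (derivative estimates).} We have $D_{r,y,z}F_R(t)=\int_{|x|<R}D_{r,y,z}u(t,x)\,dx$, and by Minkowski's inequality and \eqref{key2} with exponent $q=2p\in(2,3)$,
\[
\|D_{r,y,z}F_R(t)\|_{q}\les |z|\int_{|x|<R}\big(G_{t-r}(x-y)+G_{t-r}^{2/q}(x-y)\big)dx .
\]
We also need a second-order analogue
\[
\|D^2_{\xi_1,\xi_2}u(t,x)\|_q\les |z_1||z_2|\,H(r_1,y_1,r_2,y_2;t,x),
\]
where $H$ is built from the kernels $G+G^{2/q}$, e.g. $\big(G_{t-r_2}+G_{t-r_2}^{2/q}\big)(x-y_2)\big(G_{r_2-r_1}+G_{r_2-r_1}^{2/q}\big)(y_2-y_1)$ for $r_1<r_2$, symmetrized. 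To get it, we write the equation satisfied by $D^2u$: using $D_\xi(uv)=uD_\xi v+vD_\xi u+D_\xi uD_\xi v$ on the Poisson space, $D^2u$ solves a linear equation with source terms of the form $z_2G_{t-r_2}(x-y_2)D_{\xi_1}u(r_2,y_2)$. We then repeat the Rosenthal iteration used for \eqref{key2}, treating this forcing as the new initial term.

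\textbf{Step 2 (integrating the bounds).} We plug these into $\gamma_1,\gamma_2,\gamma_3$. For $\gamma_3$ we get, for instance,
\[
R^{-q/2}\int_0^t\int_\bR\int_{\bR_0}|z|^{q}\Big(\int_{|x|<R}(G+G^{2/q})\Big)^{q}\,\nu(dz)\,dy\,dr
\les m_q\, R^{-q/2}\cdot R .
\]
This uses that $\int_\bR G_s^{2/q}(x)\,dx\les s^{\frac12-\frac1q}$ is integrable in $s$, and that the $x$-integral is at most $\min(R, \ldots)$ while the $y$-integral over an $O(R)$ range gives a factor $R$. The terms $\gamma_1,\gamma_2$ are four-fold integrals in which the spatial variables are tied together by kernels, so they yield one free factor $R$ from a single $dy$ integration. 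Combined with the normalization, they also give powers of the form $R^{1-q/2}$ or $R^{-(1-1/p)}$. We will collect all exponents and check that the worst one is $R^{-(1-\frac1p)}$; the Kolmogorov extra terms are handled the same way.

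\textbf{Main obstacle.} The hard step is the second-order estimate in Step 1. The iteration must control products of sums $G+G^{2/q}$ whose singular parts $G^{2/q}_{s}\sim s^{-1/q}$ compound, and the kernel convolution identities such as $\int G^{a}_{t-s}(x-y)G^{b}_{s-r}(y-w)\,dy$ no longer close exactly for fractional powers. We will first try to bound these fractional convolutions by $C(t-s)^{\alpha}(s-r)^{\beta}\big(G_{t-r}+G_{t-r}^{2/q}\big)(x-w)$ with integrable powers, exactly as in the proof of \eqref{key2}. Then we will sum the resulting Dirichlet-type time integrals over the iteration steps, as a convergent series of Gamma-function ratios.
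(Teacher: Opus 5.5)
Your proposal is correct and follows essentially the same route as the paper. Both apply Proposition~\ref{tara} to $F_R(t)$ with $\sigma_R^2(t)\asymp R$, use \eqref{key2} at exponent $2p$ with Minkowski's inequality, and get the second-order bound by rerunning the Rosenthal iteration with forcing term $z_2G_{t-r_2}(x-y_2)D_{\xi_1}u(r_2,y_2)$; integrating these bounds then gives $\gamma_i\lesssim R^{-(1-1/p)}$ for each $i$.

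The step you flag as the main obstacle is actually short in the paper. The factor $\|D_{\xi_1}u(r_2,y_2)\|_p$ does not depend on $(t,x)$, and iterating on $p$-th moments with $H^{(p)}=G^2+G^p$ avoids fractional powers, which would close under convolution anyway since $G_t^a=c_at^{\frac{1-a}{2}}G_{t/a}$. Your Step~2 becomes rigorous once you write $G_s^{1/p}\lesssim G_{ps}$, so every $x$-integral is some $\varphi_{\cdot,R}\le 1$ with $\int_{\bR}\varphi_{\cdot,R}=2R$; the $y$-integral runs over all of $\bR$, not an $O(R)$ range. The convolution bounds \eqref{past}--\eqref{future}, or a simple sup-times-$L^1$ estimate, then handle $\gamma_1$, $\gamma_2$, $\gamma_6$ and $\gamma_7$.
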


The proof of QCLT is based on the second-order Poincar\'e inequality of \cite{trauthwein25} combined with the key estimate \eqref{key2}, and a similar estimate for the second-order Malliavin derivative $D^2 u(t,x)$.  In the case of (pAm) with space-time Gaussian white noise, the rate in QCLT is $R^{-1/2}$.
In the L\'evy case, this rate can be taken to be $R^{-\varepsilon}$ with $\varepsilon \in (0,1/3)$, if the L\'evy measure $\nu$ satisfies $m_p<\infty$ for all $p\in (1,3)$. An example for which $m_p<\infty$ for all $p>0$ is the {\em Gamma white noise} (see Example 1.2 of \cite{BN16}).

\medskip

For the next result, we denote by $D[0,T]$ the space of c\`adl\`ag functions on $[0,T]$, i.e. right-continuous functions $f:[0,T] \to \bR$ which have left limits.

\begin{theorem}[Functional Central Limit Theorem]
\label{FCLT}
Under the hypotheses of Theorem~\ref{QCLT}, for every $R>0$, the process
$\{F_R(t)\}_{t\geq0}$ has a c\`adl\`ag modification (denoted also $F_R$).
Moreover, for any $T>0$,
\[
\frac{1}{\sqrt R}F_R(\cdot)
\xrightarrow{d}
\cG(\cdot) \quad \mbox{in $(D[0,T],J_1)$ as $R \to \infty$},
\]
where $\{\cG(t)\}_{t\geq 0}$ is a zero-mean (continuous) Gaussian process with covariance 
\[
\bE[\cG(t) \cG(s)]=\Sigma_{t,s} \quad \mbox{(given by \eqref{limit-cov})},
\]
Moreover, the convergence also holds in $(D[0,T],U)$, where $U$ denotes the topology of uniform convergence.
\end{theorem}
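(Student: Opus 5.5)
We prove the functional CLT in three stages: a càdlàg modification, convergence of the finite-dimensional distributions, and tightness. We then upgrade from $J_1$ to the uniform topology.

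\textbf{Càdlàg modification.} By the mild formulation and stochastic Fubini,
\[
F_R(t)=\int_0^t\int_{\bR} \varphi_{t,R}(s,y)\, u(s,y)\, L(ds,dy), \qquad \varphi_{t,R}(s,y)=\int_{|x|<R}G_{t-s}(x-y)\,dx .
\]
Equivalently, integrating the equation \eqref{pAm} in $x$ against $1_{\{|x|<R\}}$ gives a semimartingale-type decomposition: a $dt$-integral of $\frac12\partial_x^2$ terms (boundary terms at $\pm R$) plus the martingale $M_R(t)=\int_0^t\int_{|y|<R}u(s,y)\,L(ds,dy)$. More carefully, we use the weak form with a smooth cutoff. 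Since the sharp indicator is not smooth, the boundary terms are $\frac12\big(\partial_x u(s,R)-\partial_x u(s,-R)\big)$, which need not exist. Instead we write $F_R(t)=\int_{\bR}1_{\{|x|<R\}}(u(t,x)-1)\,dx$ and approximate the indicator by smooth $\psi_n$. The weak formulation gives $\int \psi_n (u(t,\cdot)-1)=\int_0^t\int \frac12\psi_n'' u\,dx\,ds+\int_0^t\int\psi_n u\,dL$. The stochastic integral is an $L^2$-martingale, so it has a càdlàg version, and the drift is continuous in $t$. Passing $n\to\infty$ in $L^2$, uniformly in $t$ via Doob's inequality, we expect a càdlàg limit. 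The drift should converge because $\int_0^t\int\frac12\psi_n''u$ tends to $\frac12\int_0^t (G\ast\cdot)$-smoothed boundary quantities; if this is delicate, we alternatively verify a Kolmogorov–Chentsov-type criterion for càdlàg modifications, namely the Chentsov condition on triples $t_1<t_2<t_3$ (see below).

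\textbf{Finite-dimensional distributions.} Theorem~\ref{ergodic-cov}(b) gives $R^{-1}\bE[F_R(t_i)F_R(t_j)]\to\Sigma_{t_i,t_j}$. By Cramér–Wold it suffices to show that $\sum_i a_iF_R(t_i)/\sqrt R$ is asymptotically normal. For this we apply the second-order Poincaré inequality of \cite{trauthwein25} to the linear combination, exactly as in the proof of Theorem~\ref{QCLT}. The bounds there involve only $\|D_{r,y,z}F_R(t_i)\|$ and $\|D^2F_R(t_i)\|$, which are controlled by \eqref{key2} and its second-order analogue, so the same computation shows the distance to the normal goes to $0$. (If the variance of the combination vanishes in the limit, the claim is trivial.)

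\textbf{Tightness.} This is the main obstacle, since the processes have jumps and we need $J_1$-tightness with a continuous limit. We plan to use Billingsley's criterion (Theorem 13.5):
\[
\bE\big[|X_R(t)-X_R(t_1)|^{\beta}\,|X_R(t_2)-X_R(t)|^{\beta}\big]\le C\,|t_2-t_1|^{1+\alpha}, \qquad X_R=F_R/\sqrt R .
\]
Since continuity of the limit and convergence in $U$ require control of jumps, we will instead aim for the stronger moment bound $\bE|F_R(t)-F_R(s)|^{q}\le C R^{q/2}|t-s|^{\gamma}$ for some $q$ with $\gamma>1$ only on a grid, combined with a bound on the jumps. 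Concretely, we decompose
\[
F_R(t)-F_R(s)=\int_s^t\!\int \varphi_{t,R}\,u\,dL+\int_0^s\!\int(\varphi_{t,R}-\varphi_{s,R})\,u\,dL ,
\]
and estimate the second moment via the isometry, $\les R|t-s|$, using \eqref{mom-u} and $\int|\varphi_{t,R}-\varphi_{s,R}|^2\les R|t-s|$. Since only second moments give exponent $1$, we use the product (Chentsov) form with $\beta=1$: by Cauchy–Schwarz and a conditional second-moment estimate, the product is $\les R^{2}|t-t_1||t_2-t|\le R^{2}|t_2-t_1|^2$, which is valid because the increment over $[t,t_2]$ has conditional second moment bounded via the martingale part given $\cF_t$, using predictability. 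This yields $J_1$-tightness, with $C$ independent of $R$ after normalization. Finally, we show that the maximal jump of $X_R$ on $[0,T]$ tends to $0$ in probability. Jumps of $F_R$ come from atoms $(s,y,z)$ of $N$ with size $z\int_{|x|<R}G_0(x-y)u(s,y)\,dx=z\,u(s,y)1_{\{|y|<R\}}$. Their maximum over $|y|<R$ is $o(\sqrt R)$ in probability, using $m_{2p}<\infty$ with a union bound:
\[
\bP\Big(\max|z\,u|>\epsilon\sqrt R\Big)\le \frac{\bE\sum|z u|^{2p}}{(\epsilon\sqrt R)^{2p}}\les \frac{R\,m_{2p}}{R^{p}}\to0,
\]
where the $2p$-moment of $u$ is finite by the paper's moment bounds, which requires $2p<3$. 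Hence every limit point is continuous, and $J_1$ convergence to the continuous $\cG$ upgrades to convergence in $(D[0,T],U)$.
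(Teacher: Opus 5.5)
Your finite-dimensional step matches the paper's (Cram\'er--Wold plus the second-order Poincar\'e inequality of \cite{trauthwein25}). The tightness step has a genuine gap. Your c\`adl\`ag modification inherits it: the smooth-cutoff argument is unfinished, since Doob controls only the stochastic integral uniformly in $t$ and not the drift, and you fall back on the same Chentsov bound.

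Write $A=F_R(t)-F_R(t_1)$ and $B=F_R(t_2)-F_R(t)$. With first powers, Cauchy--Schwarz gives only $\bE[|A||B|]\lesssim R|t_2-t_1|$. That is exponent $1$ after normalization, and Billingsley's criterion needs exponent strictly greater than $1$. With squares, the claimed bound $\bE[A^2B^2]\lesssim R^2|t-t_1||t_2-t|$ does not follow from second moments, for two reasons:
\begin{itemize}
\item $F_R$ is not a martingale. $B$ contains the $\cF_t$-measurable term $B_2=\int_0^t\int_{\bR}(\varphi_{t_2,R}-\varphi_{t,R})u\,dL$, on which conditioning on $\cF_t$ does nothing, so $\bE[A^2B_2^2]$ needs fourth moments.
\item Even for the martingale part $B_1$, $\bE[B_1^2\,|\,\cF_t]=m_2\,\bE\big[\int_t^{t_2}\int_{\bR}\varphi_{t_2,R}^2u^2\,dy\,ds\,\big|\,\cF_t\big]$ is random. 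You again need mixed fourth-order moments of $A$ and $u$.
\end{itemize}
Under the hypotheses these need not be finite: only $m_{2p}<\infty$ with $2p<3$ is assumed, and $u(t,x)$ has finite moments only of order $<3$. Concretely, an atom $(\tau,y_0,z_0)$ of $N$ with $\tau\in(t_1,t)$ and $|y_0|<R$ produces a jump $z_0u(\tau-,y_0)$ in $A$ and a perturbation proportional to $z_0u(\tau-,y_0)$ in $B$. This gives contributions of order $z_0^4u(\tau-,y_0)^4$ to $A^2B^2$.

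Two ingredients are missing: separating the non-martingale part with an $R$-independent estimate, and working with $p$-th powers instead of squares. The paper writes $F_R=C_R+M_R$, where $M_R(t)=\int_0^t\int_{\bR}1_{[-R,R]}(y)u\,dL$ is exactly the martingale you identified. Plancherel gives $\int_0^s\int_{\bR}|\varphi_{t,R}-\varphi_{s,R}|^2\le C_1(t-s)^{3/2}$ and $\|\varphi_{r,R}-1_{[-R,R]}\|_{L^2}^2\le C_2r^{1/2}$, hence $\bE|C_R(t)-C_R(s)|^2\lesssim(t-s)^{3/2}$ uniformly in $R$. This yields a continuous version of $C_R$, so $F_R$ is c\`adl\`ag, and tightness of $R^{-1/2}C_R$ in $C[0,T]$ by Kolmogorov. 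Your bound $\int|\varphi_{t,R}-\varphi_{s,R}|^2\lesssim R|t-s|$ is too crude here, because after dividing by $R$ it gives exponent $1$ only.

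For $M_R$, apply Billingsley's Theorem 13.5 with $2\beta=p$. Conditioning on $\cF_s$, Cauchy--Schwarz and conditional Jensen reduce $\bE[|M_R(s)-M_R(r)|^p|M_R(t)-M_R(s)|^p]$ to $\bE|M_R(s)-M_R(r)|^{2p}$ and $\bE(\int_s^t\int_{-R}^R u^2)^p$. Both are controlled by Rosenthal's inequality with $m_{2p}<\infty$ and $\sup\bE|u|^{2p}<\infty$, giving $|t-r|^{(p+1)/2}$ after normalization. Tightness of the sum follows because addition is $J_1$-continuous on $C[0,T]\times D[0,T]$. Your maximal-jump estimate is correct but not needed, since $J_1$ convergence to the continuous limit $\cG$ already implies convergence in $U$.
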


Theorem \ref{FCLT} introduces an element of novelty in this area, being the first result of this kind for which the limit is in the Skorohod space $D[0,T]$, instead of the usual space $C[0,T]$ of continuous functions. This is due to the fact that in the case of the heat kernel, the function $\varphi_{t,R}(y)=\int_{-R}^R G_{t}(x-y)dx$ converges in $L^2(\bR)$ as $t \to 0$ to $1_{[-R,R]}$, instead of 0 as in the case of the wave kernel. This forces the decomposition $F_R=C_R+M_R$, in which $C_R$ is continuous and $M_R$ is a pure-jump process; see relation \eqref{FR-decomp} below. Therefore, it is not possible to show that $F_R$ has a continuous modification.

\medskip 

Given that (pAm) driven by Gaussian noise is closely related to the Kardar-Parisi-Zhang (KPZ) equation via the Hopf-Cole transformation, the results presented in this article may generate significant interest in the scientific community, being the first to tackle asymptotics for spatial averages related to (pAm) in an impulsive environment which contains jumps. However, to apply the Hopf-Cole transformation, one has to show first that the solution is strictly positive, which is an open problem in the L\'evy setting. (To be the our knowledge, there is no comparison principle for the stochastic heat equation with L\'evy white noise.)

\medskip

This article is organized as follows. In Section \ref{section-prelim}, we present some preliminaries regarding Malliavin calculus on the Poisson space, some moment estimates, and the finite $p$-th moment property of the solution. In Section \ref{section-key}, we present the proof of the key estimate \eqref{key2} for the Malliavin derivative $Du$, and the similar estimate for $D^2 u$. In Section \ref{section-proofs}, we include the proofs of Theorems \ref{ergodic-cov}, \ref{QCLT} and \ref{FCLT}.

\section{Preliminaries}
\label{section-prelim}

In this section, we include some preliminary results.

\subsection{Malliavin calculus}

In this section, we include some basic material about Malliavin calculus with respect to the compensated Poisson random measure $\widehat{N}$.

We consider the Hilbert space $\cH=L^2({\bf Z},\cZ,\fm).$, where
\[
({\bf Z},\cZ,\fm)=\big(\bR_{+} \times \bR \times \bR_0,\ \cB(\bR_+) \otimes \cB(\bR) \otimes \cB(\bR_0), \ {\rm Leb} \times {\rm Leb} \times \nu\big).
\]

$\bullet$ {\bf Chaos Expansion.}
Any random variable $F \in L^2(\Omega)$ which is $\cF^N$-measurable has the {\em Poisson-chaos expansion}:
\begin{equation}
\label{Poisson-chaos}
F=\bE(F)+\sum_{n\geq 1}I_n(f_n), \quad \mbox{for some $f_n \in \cH^{\odot n}$}.
\end{equation}
This series is orthogonal in $L^2(\Omega)$. Here $I_n$ is the multiple integral with respect to $\widehat{N}$ and $\cH^{\odot n}$ is the set of symmetric functions in $\cH^{\otimes n}$.
For any $f \in \cH^{\otimes n}$,
\[
\bE[I_n(f)]=0 \quad \mbox{and} \quad \bE|I_n(f)|^2=n! \|\widetilde{f}\|_{\cH^{\otimes n}}^2,
\]
where $\widetilde{f}$ is the symmetrization of $f$. Moreover, $I_n(f)=I_n(\widetilde{f})$ for any $f \in \cH^{\otimes n}$.

\medskip 
$\bullet$ {\bf Malliavin Derivative.}
For any random variable $F \in L^2(\Omega)$ with chaos expansion \eqref{Poisson-chaos}, we define the {\em Malliavin derivative} of $F$ by:
\[
D_{\xi}F=\sum_{n\geq 1}nI_{n-1}\big(f_n(\cdot,\xi)\big), \quad \mbox{for all} \quad \xi \in {\bf Z},
\]
provided that 
\[
\bE\|DF\|_{\cH}^2=\sum_{n\geq 1}nn! \|\widetilde{f}_n\|_{\cH^{\otimes n}}^2<\infty.
\] In this case, we write $F \in {\rm dom}(D)$.

Similarly, for any integer $k \geq 1$, we define the {\em $k$-th Malliavin derivative} of $F$ by:
\[
D_{\xi_1,\ldots,\xi_k}^kF=\sum_{n\geq k}n(n-1)\ldots (n-k+1)I_{n-k}\big(f_n(\cdot,\xi_1,\ldots,\xi_k)\big), \quad \mbox{for all} \quad \xi_1,\ldots,\xi_k \in {\bf Z},
\]
provided that 
\[
\bE\|D^kF\|_{\cH^{\otimes k}}^2=\sum_{n\geq k}n(n-1)\ldots (n-k+1) n! \|\widetilde{f}_n\|_{\cH^{\otimes n}}^2<\infty.
\] 
In this case, we write $F \in {\rm dom}(D^k)$.

We will use the following result. For the case $k=1$, see Lemma 1.2.3 of \cite{nualart06} for the Gaussian case. 
and Lemma 2.11.(ii) of \cite{BZ26} for the Poisson case.

\begin{lemma}
\label{lem123}
Let $k\geq 1$ be an integer.
Let $(F_n)_{n\geq 1} \subseteq {\rm dom}(D^k)$ be such that $F_n \to F$ in $L^2(\Omega)$, and
\begin{equation}
\label{cond-123}
\sup_{n\geq 1}\bE\|D^kF_n\|_{\cH^{\otimes k}}^2 <\infty.
\end{equation}
Then $F \in {\rm dom}(D^k)$ and $(D^kF_n)_{n\geq 1}$ converges to $D^kF$ in the weak topology of $L^2(\Omega;\cH^{\otimes k})$.
\end{lemma}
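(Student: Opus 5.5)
We argue entirely at the level of chaos kernels, following the Gaussian proof of Lemma 1.2.3 in \cite{nualart06}. Write $F_n=\bE(F_n)+\sum_{m\geq 1}I_m(f_m^{(n)})$ and $F=\bE(F)+\sum_{m\geq1}I_m(f_m)$ with symmetric kernels. By the isometry, $F_n\to F$ in $L^2(\Omega)$ is equivalent to $\sum_m m!\|f_m^{(n)}-f_m\|^2_{\cH^{\otimes m}}\to 0$. In particular $f_m^{(n)}\to f_m$ in $\cH^{\otimes m}$ for every fixed $m$.

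\textbf{Step 1: $F\in{\rm dom}(D^k)$.} Let $K$ denote the supremum in \eqref{cond-123}. For any fixed $M\geq k$, strong convergence of each kernel gives
\[
\sum_{m=k}^{M} m(m-1)\cdots(m-k+1)\, m!\,\|f_m\|^2_{\cH^{\otimes m}}
=\lim_{n\to\infty}\sum_{m=k}^{M} m(m-1)\cdots(m-k+1)\, m!\,\|f_m^{(n)}\|^2_{\cH^{\otimes m}}\leq K .
\]
Letting $M\to\infty$ gives $\bE\|D^kF\|^2_{\cH^{\otimes k}}\leq K<\infty$, so $F\in{\rm dom}(D^k)$.

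\textbf{Step 2: weak convergence.} The sequence $(D^kF_n)$ is bounded in the Hilbert space $L^2(\Omega;\cH^{\otimes k})$. It therefore suffices to show that $\bE\langle D^kF_n-D^kF,G\rangle_{\cH^{\otimes k}}\to 0$ for all $G$ in a dense subset; boundedness then extends the convergence to all $G$.

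For the dense set we take finite sums of elements $G=I_j(g)$ with $g\in\cH^{\otimes(j+k)}$, viewed as a function of $(\cdot,\xi_1,\dots,\xi_k)$ and symmetric in its first $j$ arguments. These span a dense subspace: by the chaos decomposition of $L^2(\Omega;\cH^{\otimes k})\cong L^2(\Omega)\otimes\cH^{\otimes k}$, every element is an $L^2$-limit of such finite sums.

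For such $G$, orthogonality of chaoses and the isometry give
\[
\bE\langle D^kF_n,G\rangle_{\cH^{\otimes k}}=\frac{(j+k)!}{j!}\, j!\,\langle f_{j+k}^{(n)},g\rangle_{\cH^{\otimes (j+k)}} .
\]
This converges to the same expression with $f_{j+k}$ in place of $f_{j+k}^{(n)}$, which equals $\bE\langle D^kF,G\rangle_{\cH^{\otimes k}}$.

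Only one point needs care: the exchange of expectation, inner product and chaos sums in the definition of $D^k$. This is justified because the series defining $D^kF_n$ converges in $L^2(\Omega;\cH^{\otimes k})$ whenever $F_n\in{\rm dom}(D^k)$, and $G$ has only finitely many chaos components. Apart from checking the density claim, there is no genuine obstacle here.
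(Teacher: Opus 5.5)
Your proof is correct, but it takes a different route from the paper.

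\textbf{The paper's route.} The paper argues abstractly. It takes closedness of $D^k$ as given and notes that the linear, norm-closed graph stays closed when the second factor carries the weak topology. It then extracts a weakly convergent subsequence of $(D^kF_n)$ by Banach--Alaoglu, identifies the limit as $D^kF$ through the closed graph, and passes to the full sequence by the subsequence criterion.

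\textbf{Your route.} You work directly with the chaos kernels. Step 1 is a Fatou-type lower-semicontinuity argument on truncated chaos sums. It gives $F\in{\rm dom}(D^k)$ together with the explicit bound $\bE\|D^kF\|^2_{\cH^{\otimes k}}\le\sup_n\bE\|D^kF_n\|^2_{\cH^{\otimes k}}$. Step 2 identifies the weak limit by testing against finite-chaos elements $I_j(g)$, where the pairing reduces to $(j+k)!\langle f^{(n)}_{j+k},g\rangle_{\cH^{\otimes(j+k)}}$. Then boundedness plus density finishes.

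\textbf{What each buys.} Your version is self-contained and yields the norm bound for free. It does not rely on closedness of $D^k$, which the paper asserts without proof and whose standard proof is essentially your kernel computation. It also avoids compactness and any subsequence argument. The paper's version is shorter and purely operator-theoretic, so it applies verbatim to any closed operator between Hilbert spaces; the chaos structure enters only through closedness.

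\textbf{A point to make explicit.} The chaos decomposition, and hence your density claim, concerns $\cF^N$-measurable variables, while $\Omega$ carries $\cF$. Since all $D^kF_n$ and $D^kF$ lie in $L^2(\Omega,\cF^N;\cH^{\otimes k})$, their pairing with anything orthogonal to that subspace is zero. So weak convergence there implies weak convergence in $L^2(\Omega;\cH^{\otimes k})$, and nothing is lost.
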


\begin{proof}
Note that $D^k$ is a closed operator. Hence, the graph of $D^k$ is closed in $L^2(\Omega) \times L^2(\Omega;\cH^{\otimes k})$, equipped with the product topology, for which $L^2(\Omega;\cH^{\otimes k})$ is equipped with the norm topology. This implies that the graph of $D^k$ is also closed in $L^2(\Omega) \times L^2(\Omega;\cH^{\otimes k})$, equipped with the product topology, for which $L^2(\Omega;\cH^{\otimes k})$ is equipped with the topology induced by the weak convergence. 

By hypothesis \eqref{cond-123}, $(D^kF_n)_{n\geq 1}$ is a bounded sequence in $L^2(\Omega;\cH^{\otimes k})$. By a corollary of Banach-Alaoglu theorem, there exists a subsequence $N'\subset \bN$ such that $D^k F_n$ converges weakly to some $\eta \in L^2(\Omega;\cH^{\otimes k})$, as $n \to \infty,n\in N'$. Because the graph of $D^k$ is closed, we conclude that $F \in {\rm dom}(D^k)$ and $D^k F=\eta$. This proves that $D^k F_n$ converges weakly to some $D^k F$, along the subsequence $N'$. To prove the convergence of the entire sequence, we use the subsequence convergence criterion: in any topological space, a sequence $(x_n)_{n\geq 1}$ converges to $x$ if and only if every subsequence contains a further subsequence which converges to $x$.

\end{proof}

\medskip 

$\bullet$ {\bf Skorohod integral.}
Let $\delta:{\rm dom}(\delta) \to L^2(\Omega)$ be the adjoint of $D$, where ${\rm dom}(\delta)$ is the set of $V \in L^2(\Omega;\cH)$ for which there exists a constant $C=C_V>0$ depending on $V$, such that 
\[
\big|\bE \langle DF,V \rangle_{\cH}\big| \leq C \|F\|_2 \quad \mbox{for any $F \in {\rm dom}(D)$}.
\] 
We say that $\delta(V)$ is 
{\em the Skorohod integral} of $V$ with respect to $\widehat{N}$, and we write
\[
\delta(V)=\int_{\bR_+}\int_{\bR}\int_{\bR_0}
V(t,x,z)\,\widehat N(\delta t,\delta x,\delta z).
\]
By duality, for any $V \in {\rm dom}(\delta)$,
\[
\bE\langle DF,V \rangle_{\cH}=\bE[F \delta(V)] \quad \mbox{for any $F \in {\rm dom}(D)$}.
\]

We recall the {\em Heisenberg commutation principle}. 

\begin{lemma}[Lemma 2.13 of \cite{BZ26}]
\label{Heisenberg}
Assume that $V \in {\rm dom}(\delta)$, $\delta(V) \in {\rm Dom}(D)$, and $V(\xi_0) \in {\rm dom}(D)$ for $\fm$-almost all $\xi_0 \in {\bf Z}$ such that
$\bE \int_{{\bf Z}^2}|D_{\xi_0}V(\xi)|^2\fm(d\xi)\fm(d\xi_0)<\infty$.
Then, for $\fm$-almost all $\xi_0 \in {\bf Z}$, $D_{\xi_0}V \in {\rm dom}(\delta)$ and
\[
D_{\xi_0}\big(\delta (V)\big)=V(\xi_0)+\delta(D_{\xi_0}V). 
\]
\end{lemma}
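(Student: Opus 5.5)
The natural route is to verify the commutation relation chaos by chaos and then pass to the general case through the duality characterizing $\delta$. First I would treat the case where $V$ has a finite chaos expansion. Then $V(\xi)=\sum_{n=0}^{N} I_n(g_n(\cdot,\xi))$ with $g_n\in \cH^{\otimes n}\otimes\cH$ symmetric in the first $n$ variables. The Skorohod integral is $\delta(V)=\sum_n I_{n+1}(\widetilde{g}_n)$, the standard chaos representation of the adjoint.

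Differentiating with the chaos definition of $D$ gives
\[
D_{\xi_0}\delta(V)=\sum_n (n+1) I_n\big(\widetilde{g}_n(\cdot,\xi_0)\big).
\]
Writing out the symmetrization, $(n+1)\widetilde{g}_n(x_1,\dots,x_n,\xi_0)$ splits into two pieces. The first is the term where $\xi_0$ sits in the last slot, namely $g_n(x_1,\dots,x_n,\xi_0)$. The second is $n$ times the symmetrization, over the remaining variables, of $g_n(\dots,\xi_0,\dots,\xi)$ with $\xi_0$ in one of the first $n$ slots. The first piece yields $V(\xi_0)$. The second yields $\sum_n n I_n\big(\widetilde{g_n(\cdot,\xi_0,\cdot)}\big)=\delta(D_{\xi_0}V)$, since $D_{\xi_0}V(\xi)=\sum_n nI_{n-1}(g_n(\cdot,\xi_0,\xi))$. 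This proves the identity for finite chaos.

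For general $V$ I would argue through duality, which avoids delicate truncation limits. Take $G$ with finite chaos expansion (a dense class in ${\rm dom}(D)$) and an arbitrary bounded measurable $h(\xi_0)$. The aim is to show that
\[
\bE\int \langle D G, D_{\xi_0}V\rangle_\cH h(\xi_0)\,\fm(d\xi_0)
\]
equals
\[
\bE\int G\big(D_{\xi_0}\delta(V)-V(\xi_0)\big)h(\xi_0)\,\fm(d\xi_0).
\]
To get this, I would approximate $V$ by its chaos truncations $V_N$. Truncation is a contraction on each chaos component. Moreover, $V_N\to V$ and $D V_N\to DV$ in $L^2$ because $\bE\int|D_{\xi_0}V(\xi)|^2<\infty$, and $\delta(V_N)=P_{N+1}\delta(V)\to\delta(V)$ with $D\delta(V_N)\to D\delta(V)$ since $\delta(V)\in{\rm dom}(D)$. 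Because $G$ has finitely many chaoses, both sides only involve finitely many chaos components, so the finite-case identity holds and passes to the limit.

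Next, the bound $|\bE\langle DG, D_{\xi_0}V\rangle|\le C_{\xi_0}\|G\|_2$ for a.e. $\xi_0$, with $C_{\xi_0}=\|D_{\xi_0}\delta(V)-V(\xi_0)\|_2$, gives $D_{\xi_0}V\in{\rm dom}(\delta)$. Identifying the pairing then gives $\delta(D_{\xi_0}V)=D_{\xi_0}\delta(V)-V(\xi_0)$. The main obstacle is the "for a.e. $\xi_0$" quantifier: it requires a countable dense family of $G$'s, which exists because $\cH$ is separable, together with the arbitrariness of $h$, so that the exceptional null set of $\xi_0$ is chosen once for all $G$.
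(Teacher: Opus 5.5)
Your argument is correct. The paper does not prove this lemma; it quotes it from \cite{BZ26}. So the relevant comparison is with the usual chaos-expansion proof of the commutation relation. That proof shares your key step. With $h^{\xi_0}_n(x_1,\ldots,x_{n-1},\xi):=g_n(x_1,\ldots,x_{n-1},\xi_0,\xi)$, the step is the kernel identity
\[
(n+1)\,\widetilde{g}_n(x_1,\ldots,x_n,\xi_0)=g_n(x_1,\ldots,x_n,\xi_0)+n\,\widetilde{h}^{\xi_0}_n(x_1,\ldots,x_n).
\]
The difference is how infinitely many chaoses are handled. The direct route applies this identity to every $n$ at once. Because $D\delta(V)$ and $V$ lie in $L^2(\Omega;\cH)$, Tonelli gives, off one $\fm$-null set of $\xi_0$ (a countable union over $n$), $\sum_n (n+1)^2 n!\,\|\widetilde{g}_n(\cdot,\xi_0)\|^2<\infty$ and $\sum_n n!\,\|g_n(\cdot,\xi_0)\|^2<\infty$. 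Hence $D_{\xi_0}\delta(V)-V(\xi_0)=\sum_n I_n(n\widetilde{h}^{\xi_0}_n)$ in $L^2(\Omega)$, with $\sum_n n^2\, n!\,\|\widetilde{h}^{\xi_0}_n\|^2<\infty$. The $m$-th kernel of $D_{\xi_0}V$ is $(m+1)h^{\xi_0}_{m+1}$, so this is exactly the chaos criterion for $D_{\xi_0}V\in{\rm dom}(\delta)$, and the identity is read off term by term. In that route the ``a.e.\ $\xi_0$'' issue you flag as the main obstacle is trivial. Your duality route instead checks $D_{\xi_0}V\in{\rm dom}(\delta)$ directly against the adjoint definition used in the paper, without the chaos criterion for ${\rm dom}(\delta)$. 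The price is the truncation, the test functions $h$, and the countable family of test variables.

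If you keep your version, tighten two details.

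First, $\fm$ is an infinite measure, so a merely bounded $h$ does not make $\int\bE\langle DG,D_{\xi_0}V\rangle_{\cH}\,h(\xi_0)\,\fm(d\xi_0)$ well defined. To pass to the limit in $N$ you need $DG\otimes h\in L^2(\Omega;\cH^{\otimes 2})$. Take $h\in L^2(\fm)$, for instance indicators of sets of finite $\fm$-measure.

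Second, extending the bound from your countable family to all $F\in{\rm dom}(D)$ requires density in the graph norm $\big(\|F\|_2^2+\bE\|DF\|_{\cH}^2\big)^{1/2}$. Finite sums of $I_n(f)$ with $f$ in countable dense subsets of $\cH^{\odot n}$ suffice. You also need $D_{\xi_0}V\in L^2(\Omega;\cH)$ for a.e.\ $\xi_0$. This is where the hypothesis $\bE\int_{{\bf Z}^2}|D_{\xi_0}V(\xi)|^2\fm(d\xi)\fm(d\xi_0)<\infty$ enters, via Fubini.
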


$\bullet$ {\bf It\^o integral}. Predictable processes on $\Omega \times \bR_{+} \times \bR \times \bR_0$ are defined similarly to those on $\Omega \times \bR_{+} \times \bR$, starting with elementary processes. For any predictable process $V \in L^2(\Omega;\cH)$ we can define the It\^o integral $I(V)$ of $V$ with respect to $\widehat{N}$, and this integral is an isometry: $\bE|I(V)|^2=\bE\|V\|_{\cH}^2$.
The following result states that the Skorohod integral is an extension of the It\^o integral.

\begin{lemma}[Lemma 2.5.(iv) of \cite{BZ24}]
\label{Ito-Skor}
Suppose that $V \in L^2(\Omega;\cH)$ is predictable. Then $V \in {\rm dom}(\delta)$ and $\delta(V)=I(V)$.
\end{lemma}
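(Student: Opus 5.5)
The plan is to use the chaos expansion on both sides and compare, via duality, with the Itô isometry. Start with elementary predictable processes of the form
\[
V(t,x,z)=\sum_{i} X_i 1_{(a_i,b_i]}(t)1_{A_i}(x)1_{B_i}(z),
\]
where $X_i$ is bounded and $\cF_{a_i}$-measurable, and $A_i\times B_i$ has finite $\mathrm{Leb}\times\nu$ measure. For such $V$ the Itô integral is explicit: $I(V)=\sum_i X_i\,\widehat N((a_i,b_i]\times A_i\times B_i)$.

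For these elementary $V$, I will show two things. First, $\bE\langle DF,V\rangle_{\cH}=\bE[F\,I(V)]$ for every $F\in{\rm dom}(D)$. Second, $|\bE\langle DF,V\rangle_{\cH}|\le \|I(V)\|_2\|F\|_2$. Together these give $V\in{\rm dom}(\delta)$ and $\delta(V)=I(V)$.

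To prove the duality identity, it suffices by linearity and density to treat $F=I_n(f_n)$ with $f_n$ simple symmetric. It also suffices to treat $X_i$ a finite chaos sum depending only on $\widehat N$ restricted to $[0,a_i]$, since $\cF_{a_i}$-measurable $L^2$ variables are limits of such sums. One route is to write $X_i\,\widehat N(\Delta_i)$ as a multiple integral. Because the kernels of $X_i$ live on $[0,a_i]$ and $\Delta_i=(a_i,b_i]\times A_i\times B_i$ has times after $a_i$, the product formula for Poisson multiple integrals has no contraction terms: $I_m(g)\,I_1(1_{\Delta_i})=I_{m+1}(g\otimes 1_{\Delta_i})$. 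Then
\[
\bE[I_n(f_n)\,I_{m+1}(g\otimes 1_{\Delta_i})]=\delta_{n,m+1}\,n!\,\langle f_n,\widetilde{g\otimes 1_{\Delta_i}}\rangle,
\]
while $\bE\langle D I_n(f_n),X_i1_{\Delta_i}\rangle_{\cH}=n\int_{\Delta_i}\bE[I_{n-1}(f_n(\cdot,\xi))X_i]\,\fm(d\xi)=n\,(n-1)!\,\langle f_n,g\otimes 1_{\Delta_i}\rangle$. These two expressions agree by symmetry of $f_n$.

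An alternative for this step is the Poisson formula $D_\xi G=G(N+\delta_\xi)-G(N)$, but the chaos route stays within the definitions given in the paper. The inequality then follows by Cauchy--Schwarz.

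For a general predictable $V\in L^2(\Omega;\cH)$, I take elementary $V_k\to V$ in $L^2(\Omega;\cH)$; such approximations exist by the construction of the Itô integral. By the isometry, $I(V_k)\to I(V)$ in $L^2(\Omega)$. For fixed $F\in{\rm dom}(D)$, with $\bE\|DF\|_{\cH}^2<\infty$, we get $\bE\langle DF,V_k\rangle\to\bE\langle DF,V\rangle$ and $\bE[F I(V_k)]\to \bE[F I(V)]$. Hence $\bE\langle DF,V\rangle=\bE[FI(V)]$, so $|\bE\langle DF,V\rangle|\le\|I(V)\|_2\|F\|_2$. This gives $V\in{\rm dom}(\delta)$, and $\delta(V)=I(V)$ by uniqueness of the adjoint; ${\rm dom}(D)$ is dense since it contains all finite chaos sums.

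The main obstacle is the elementary-case identity, specifically justifying the contraction-free product formula for $\cF_{a}$-measurable $X$ times an increment after time $a$. I would first approximate $X$ by finite chaos sums with kernels supported in $[0,a]$, obtained by projecting the chaos kernels onto that time region. I would then verify directly that $I_m(g)\,\widehat N(\Delta)=I_{m+1}(g\otimes1_\Delta)$ for simple $g$ on disjoint product sets, using independence of $\widehat N$ on disjoint regions.
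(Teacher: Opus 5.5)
The paper gives no proof of this lemma; it quotes it from \cite{BZ24}, so there is no in-paper argument to compare yours with. Taken on its own, your proof is correct. The following steps all go through:
\begin{itemize}
\item the reduction to elementary $V$;
\item the replacement of $X_i$ by finite chaos sums with kernels supported in $([0,a_i]\times\bR\times\bR_0)^m$. This is $\bE[\,\cdot\,|\cF_{a_i}]$ acting on the kernels, and it relies on the filtration being the one generated by $N$, which you use implicitly;
\item the identity $I_m(g)\widehat{N}(\Delta)=I_{m+1}(g\otimes 1_\Delta)$ for time-separated supports. You check it on off-diagonal simple $g$ and extend it using $\bE|I_m(h)\widehat{N}(\Delta)|^2=\bE|I_m(h)|^2\fm(\Delta)$, which holds by independence;
\item the matching of the two chaos computations by symmetry of $f_n$. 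The derivative side also carries the factor $\delta_{n,m+1}$, which you suppressed;
\item the passage to general predictable $V$ via the It\^o isometry, with $\delta(V)=I(V)$ following from the density of ${\rm dom}(D)$.
\end{itemize}

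A shorter route is common in the Poisson literature: the add-one-cost representation $D_\xi F=F(N+\delta_\xi)-F(N)$ combined with the Mecke formula. For $V=X1_\Delta$, predictability gives $X(N+\delta_\xi)=X(N)$ for $\xi\in\Delta$. Mecke then turns $\bE\int_\Delta F(N+\delta_\xi)X(N+\delta_\xi)\,\fm(d\xi)$ into $\bE[FX\,N(\Delta)]$, and subtracting $\bE[FX]\fm(\Delta)$ gives the duality $\bE\langle DF,V\rangle_{\cH}=\bE[FX\widehat{N}(\Delta)]$ in one line.

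That route avoids the kernel-restriction fact and the product formula. It does, however, presuppose a nontrivial theorem identifying the chaos-defined $D$ with the add-one-cost operator. Your argument stays entirely within the chaos-based definitions of $D$ and $\delta$ used in this paper, at the cost of those two auxiliary facts, and you justify both adequately.
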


\subsection{Existence of solution}
The solution of equation \eqref{pAm} is unique and has the Poisson chaos expansion:
\begin{equation}
\label{series}
u(t,x)=1+\sum_{n\geq 1}I_n(F_n(\cdot,t,x)),
\end{equation}
where
\begin{align*}
F_n(t_1,x_1,z_1,\ldots,t_n,x_n,z_n,t,x) &=\prod_{i=1}^n z_i f_n(t_1,x_1,\ldots,t_n,x_n,t,x)\\
f_n(t_1,x_1,\ldots,t_n,x_n,t,x)&=\prod_{i=1}^n G_{t_{i+1}-t_i}(x_{i+1}-x_i)1_{\{0<t_1<\ldots<t_n<t\}} ,
\end{align*}
where we used the convention $t_{n+1}=t$ and $x_{n+1}=x$.

The series \eqref{series} converges in $L^2(\Omega)$ since 
\begin{equation}
\label{series2}
\sum_{n \geq 1}\bE|I_n(F_n(\cdot,t,x))|^2 =\sum_{n \geq 1} n!\|\widetilde{F}_n(\cdot,t,x)\|_{\cH^{\otimes n}}^2=\sum_{n \geq 1}n! m_2^n \|\widetilde{f}_n(\cdot,t,x)\|_{L^2((\bR_+\times \bR)^n)}^2<\infty.
\end{equation}
Moreover, $\bE|u(t,x)|^2=\bE|U(t,x)|^2$, where $U$ is the solution of the parabolic Anderson model driven by space-time Gaussian white noise $W$:
\begin{equation}
\label{pAm-Gauss}
\frac{\partial U}{\partial t}(t,x)=\frac{1}{2}\frac{\partial^2 U}{\partial x^2}(t,x)+\sqrt{m_2} U(t,x)\dot{W}(t,x), \quad  t>0,\ x\in\bR,
 \end{equation}
with initial condition $U(0,x)=1$.

\subsection{Moment inequalities}
In this section, we include some moment inequalities which are used in the sequel.

We recall Rosenthal's inequality for the stochastic integral with respect to $L$. See e.g. Corollary 2.5 of \cite{BN16} or Theorem 1 in \cite{marinelli-rockner14}. 

\begin{proposition}
\label{ros1}
Let $\Phi=\{\Phi(t,x); t\in [0,T], x\in \bR\}$ be a predictable process such that
$\Phi\in L^2(\Omega\times [0,T]\times \bR)$. If $p\geq 2$ is such that $m_p<\infty$, then
\begin{equation}
    \label{Rosenthal}
\bE\left|\int_0^T\int_{\bR}\Phi(t,x)L(dt,dx)\right|^p \leq \cC_p^p \bigg\{  \bE\left(\int_0^T\int_{\bR}|\Phi(t,x)|^2 dxdt\right)^{\frac{p}{2}}
+\bE\int_0^T\int_{\bR}|\Phi(t,x)|^p dxdt \bigg\},
\end{equation}
where $\cC_p^p=2^{p-1}B_p^p\left(m_2^{p/2}\vee m_p\right)$.
\end{proposition}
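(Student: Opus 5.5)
The plan is to deduce \eqref{Rosenthal} from the general Rosenthal (Bichteler--Jacod / Kunita) inequality for stochastic integrals with respect to a compensated Poisson random measure, as given in Theorem~1 of \cite{marinelli-rockner14}. We first rewrite the integral against $L$ as an integral against $\widehat{N}$. By \eqref{def-L}, extended from deterministic to predictable integrands,
\[
\int_0^T\int_{\bR}\Phi(t,x)L(dt,dx)=\int_0^T\int_{\bR}\int_{\bR_0} H(t,x,z)\,\widehat{N}(dt,dx,dz),\qquad H(t,x,z):=\Phi(t,x)z .
\]
The process $H$ is predictable on $\Omega\times\bR_+\times\bR\times\bR_0$. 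It lies in $L^2(\Omega;\cH)$, since $\bE\int|H|^2 d\fm=m_2\,\bE\int_0^T\int_\bR|\Phi|^2dxdt<\infty$. Hence the It\^o integral $I(H)$ is well defined.

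We then apply the Poisson Rosenthal inequality: for $p\geq 2$ and predictable $H$,
\[
\bE|I(H)|^p\leq B_p^p\Big\{\bE\Big(\int|H|^2 d\fm\Big)^{p/2}+\bE\int|H|^p d\fm\Big\}.
\]
If one prefers to derive this rather than cite it, the route is as follows.
\begin{itemize}
\item Apply the BDG inequality to the purely discontinuous martingale $M_s=\int_0^s\int\int H\,d\widehat N$. This gives $\bE|M_T|^p\leq c_p\bE[M]_T^{p/2}$, with $[M]_T=\int|H|^2dN$.
\item Split $\int|H|^2dN=\int|H|^2d\widehat N+\int|H|^2d\fm$ and use $(a+b)^{p/2}\leq 2^{p/2-1}(a^{p/2}+b^{p/2})$.
\item Bound the term $\bE|\int|H|^2d\widehat N|^{p/2}$ directly when $p\leq 4$, since then $p/2\leq 2$ and one can use $\bE(\int|H|^2dN)^{p/2}\leq \bE\int|H|^pdN$. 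For larger $p$, iterate over dyadic exponents.
\end{itemize}
This iteration is the only genuinely technical step. We expect it to be the main obstacle if done from scratch, and it is why we prefer to quote \cite{marinelli-rockner14} (or Corollary~2.5 of \cite{BN16}).

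Finally, we substitute $H=\Phi z$ and use Fubini in $z$. This gives
\[
\int|H|^2d\fm=m_2\int_0^T\int_\bR|\Phi|^2dxdt,\qquad \int|H|^pd\fm=m_p\int_0^T\int_\bR|\Phi|^pdxdt .
\]
Both $m_2$ and $m_p$ are finite: $m_2<\infty$ by \eqref{m2-cond} and $m_p<\infty$ by hypothesis. Bounding $m_2^{p/2}$ and $m_p$ each by $m_2^{p/2}\vee m_p$, and absorbing the factor $2^{p-1}$ coming from the splitting above (or from the convention in the cited constant), yields \eqref{Rosenthal} with $\cC_p^p=2^{p-1}B_p^p(m_2^{p/2}\vee m_p)$. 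If $\bE\int|\Phi|^p=\infty$, the inequality is trivial. Otherwise a standard localization, approximating $\Phi$ by elementary predictable processes and applying Fatou's lemma, justifies the general case.
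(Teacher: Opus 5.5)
Your main line is exactly what the paper does: write $\int_0^T\int_{\bR}\Phi\,dL$ as the It\^o integral of $H=\Phi z$ against $\widehat{N}$, quote the Poisson Rosenthal inequality of \cite{marinelli-rockner14} (or Corollary 2.5 of \cite{BN16}), and integrate out $z$ to produce $m_2$ and $m_p$. The paper gives no proof beyond these citations. Two points in your write-up still need correcting.

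\textbf{The constant.} In the paper, $B_p$ is defined as the constant of the martingale Rosenthal inequality
$\|\sup_{s\le t}|M(s)|\|_p\le B_p(\|\langle M\rangle(t)^{1/2}\|_p+\|\sup_{s\le t}|\Delta M(s)|\|_p)$
for c\`adl\`ag square-integrable martingales. Applied to $M(s)=\int_0^s\int_{\bR}\Phi\,dL$, it gives the stated $\cC_p$ in one line:
\begin{itemize}
\item the factor $2^{p-1}$ comes from $(a+b)^p\le 2^{p-1}(a^p+b^p)$;
\item $\langle M\rangle(T)=m_2\int_0^T\int_{\bR}|\Phi|^2dxdt$;
\item the jump term is handled by
\[
\bE\sup_{s\le T}|\Delta M(s)|^p\le \bE\sum_{s\le T}|\Delta M(s)|^p=\bE\int|\Phi(t,x)|^p|z|^p N(dt,dx,dz)=m_p\,\bE\int_0^T\int_{\bR}|\Phi(t,x)|^p dxdt .
\]
\end{itemize}
So the factor $2^{p-1}$ is not ``absorbed'' from your splitting, which only yields $2^{p/2-1}$. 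With your $B_p$ (the constant of the Poisson-integral inequality) that factor is merely superfluous, and you have proved the bound with a different $B_p$. This martingale route also removes the dyadic iteration you flag as the main obstacle, since the largest jump is controlled directly through the compensator of $N$.

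\textbf{A false step in your from-scratch sketch.} For $p>2$ the map $x\mapsto x^{p/2}$ is superadditive, so $(\int|H|^2dN)^{p/2}\ge\int|H|^pdN$, the reverse of what you use. It fails even in expectation, e.g.\ for $|H|=1$ on a set of large $\fm$-mass, where $\int|H|^2dN$ is Poisson with large mean. For $2<p\le 4$ the correct step is to apply BDG with exponent $p/2\in(1,2]$ to the martingale $\int|H|^2d\widehat{N}$:
\[
\bE\Big|\int|H|^2d\widehat{N}\Big|^{p/2}\lesssim \bE\Big(\int|H|^4dN\Big)^{p/4}\le \bE\int|H|^pdN=\bE\int|H|^p d\fm .
\]
Here $p/4\le 1$, so subadditivity now works in your favour.
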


Here, $B_p>0$ is the constant from Rosenthal's inequality for a c\`adl\`ag square-integrable martingale $\{M(t)\}_{t \geq 0}$ with $M(0)=0$ (see e.g. Lemma 2.1 of \cite{DV90}): for any $p\geq 2$,
$$\|\sup_{s \leq t}|M(s)|\|_p \leq B_p \Big(\| \langle M \rangle(t)^{1/2}\|_p +\|\sup_{s \leq t}|(\Delta M)(s)| \|_p\Big),$$
where $\langle M \rangle$ is the predictable quadratic variation, and $\Delta M$ is the jump process.

Letting $c_p=(2\pi)^{\frac{1-p}{2}} p^{-1/2}$ for any $p>0$, we see that:
\begin{equation}
\label{Gtp}
G_t^p(x)=c_p t^{\frac{1-p}{2}}G_{t/p}(x) \quad \mbox{and} \quad \int_{\bR}G_t^p(x)dx=c_p t^{\frac{1-p}{2}}.
\end{equation}
Therefore, for any $p \in (0,3)$,
\[
\nu_{t,p}:=\int_0^t \int_{\bR} G_{t-s}^p(x-y)dyds=\frac{2 c_p}{3-p} t^{\frac{3-p}{2}}.
\]

\begin{lemma}
Let $p\geq 2$ be such that $m_p<\infty$, and $\Phi=\{\Phi(t,x); t\geq 0, x\in\bR\}$ be
predictable process such that for any $t>0$ and $x\in \bR$,
\[
\int_0^t \int_{\bR }G_{t-s}^2(x-y)|\Phi(s,y)|^2 dyds<\infty. 
\]
Then, for any $t>0$ and $x \in \bR$,
\begin{equation}
    \label{estimate}
\bE\left| \int_0^t\int_{\bR}G_{t-s}(x-y)\Phi(s,y)L(ds,dy) \right|^p \le A_{t,p} \int_0^t\int_{\bR}
H_{t-s}^{(p)}(x-y)\bE|\Phi(s,y)|^p dyds,
\end{equation}
where $A_{t,p}=\cC_p^p\max\{\nu_{t,2}^{\frac{p}{2}-1}, 1\}$, $\cC_p$ is the constant from Proposition \ref{ros1}, and 
\begin{equation}
\label{def-H}
H_t^{(p)}(x)=G_t^2(x)+G_t^p(x). 
\end{equation}
\end{lemma}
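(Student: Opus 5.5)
We apply Proposition~\ref{ros1} with the predictable integrand $\Psi(s,y)=G_{t-s}(x-y)\Phi(s,y)1_{\{s<t\}}$ on $[0,t]\times\bR$. This is legitimate because, for fixed $(t,x)$, $\Psi$ is predictable: it is a deterministic function times a predictable process. The hypothesis guarantees $\int_0^t\int_\bR \bE|\Psi|^2<\infty$ once we interpret it in the $L^2(\Omega)$ sense. If the right-hand side of \eqref{estimate} is infinite there is nothing to prove, so we may assume it is finite. Rosenthal's inequality \eqref{Rosenthal} then gives
\[
\bE\left|\int_0^t\int_{\bR}G_{t-s}(x-y)\Phi(s,y)L(ds,dy)\right|^p\le \cC_p^p\Big\{\bE\Big(\int_0^t\int_\bR G_{t-s}^2(x-y)|\Phi(s,y)|^2dyds\Big)^{p/2}+\int_0^t\int_\bR G_{t-s}^p(x-y)\bE|\Phi(s,y)|^pdyds\Big\}.
\]
The second term is already of the desired form, since $G^p\le H^{(p)}$.

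For the first term we use Hölder's inequality with respect to the finite measure $\mu(ds,dy)=G_{t-s}^2(x-y)dyds$ on $[0,t]\times\bR$. Its total mass is $\nu_{t,2}=\int_0^t\int_\bR G_{t-s}^2(x-y)dyds$, which is finite and independent of $x$ by \eqref{Gtp}. Since $p/2\ge1$, Jensen's (or Hölder's) inequality yields
\[
\Big(\int |\Phi|^2 d\mu\Big)^{p/2}\le \mu([0,t]\times\bR)^{\frac p2-1}\int|\Phi|^p d\mu=\nu_{t,2}^{\frac p2-1}\int_0^t\int_\bR G_{t-s}^2(x-y)|\Phi(s,y)|^pdyds.
\]
Taking expectations and using Fubini's theorem, the first term is bounded by $\nu_{t,2}^{p/2-1}\int_0^t\int_\bR G^2_{t-s}(x-y)\bE|\Phi(s,y)|^pdyds$.

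Adding the two bounds and bounding each coefficient by $\max\{\nu_{t,2}^{p/2-1},1\}$, we obtain \eqref{estimate} with $A_{t,p}=\cC_p^p\max\{\nu_{t,2}^{\frac p2-1},1\}$ and $H^{(p)}=G^2+G^p$.

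The only delicate point is the integrability needed to apply Proposition~\ref{ros1}: it requires $\Psi\in L^2(\Omega\times[0,t]\times\bR)$. When the right-hand side of \eqref{estimate} is finite, Hölder's inequality as above gives $\bE\int G^2|\Phi|^2\le (\bE\int G^2|\Phi|^p)^{2/p}\nu_{t,2}^{1-2/p}<\infty$, so this requirement is automatically satisfied. Otherwise the stated hypothesis is understood as ensuring the stochastic integral is well defined, and the inequality holds trivially.
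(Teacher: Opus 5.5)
Your proof is correct and takes essentially the same route as the paper: Rosenthal's inequality (Proposition~\ref{ros1}), followed by H\"older's inequality for the finite measure $G_{t-s}^2(x-y)\,dy\,ds$ of total mass $\nu_{t,2}$. The only difference is minor: you apply Jensen pathwise and then Tonelli, whereas the paper first uses Minkowski's inequality in $L^{p/2}(\Omega)$ to reach $\big(\int_0^t\int_{\bR} G_{t-s}^2(x-y)\|\Phi(s,y)\|_p^2\,dy\,ds\big)^{p/2}$ and then applies H\"older to this deterministic integral; both give the same constant, and your extra check that $\Psi\in L^2(\Omega\times[0,t]\times\bR)$ whenever the right-hand side is finite is a step the paper does not spell out.
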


\begin{proof}
By Proposition \ref{ros1}, 
\begin{align*}
\bE\left|\int_0^t\int_{\bR}G_{t-s}(x-y)\Phi(s,y)L(ds,dy) \right|^p & \le \cC_p^p \left\{
\bE\left(
\int_0^t\int_{\bR}G_{t-s}^2(x-y)|\Phi(s,y)|^2 dyds\right)^{p/2} \right. \\
& \left.
+\int_0^t\int_{\bR}G_{t-s}^p(x-y)\|\Phi(s,y)\|_p^p dyds \right\}=:
\cC_p^p(I_1+I_2)  
\end{align*}

\noindent By Minkowski's inequality, 
\[
I_1^{2/p}=\left\|\int_0^t\int_{\bR}G_{t-s}^2(x-y)|\Phi(s,y)|^2 dyds \right\|_{p/2} \leq \int_0^t\int_{\bR}G_{t-s}^2(x-y)\|\Phi(s,y)\|_{p}^2 dyds.
\]

Using H\"older's inequality for the finite measure $\mu(ds,dy)=G_{t-s}^2(x-y)dsdy$ on $[0,t] \times \bR$ (whose total mass is $\nu_{t,2}$), we obtain:
\begin{align*}
I_1 & \leq \left(\int_0^t\int_{\bR}G_{t-s}^2(x-y)\|\Phi(s,y)\|_{p}^{2}dyds \right)^{\frac{p}{2}} \le \nu_{t,2}^{\frac{p}{2}-1}\int_0^t\int_{\bR}G_{t-s}^2(x-y)\|\Phi(s,y)\|_{p}^{p}dyds.
    \end{align*}
The conclusion follows.
\end{proof}

In what follows, we will apply \eqref{estimate} to three cases: $\Phi=u$, 
$\Phi=D_{r,y,z}u$ and $\Phi=D^2_{(r_1,y_1,z_1),(r_2,y_2,z_2)}u$.  We denote
\[
T_n(t)=\{\pmb{t}=(t_1,\ldots,t_n) \in [0,t]^n; t_1<\ldots<t_n\}
\]

We will use the following result about beta-type integrals, whose proof follows by induction. 

\begin{lemma}
\label{beta-int}
For any $\beta_0,\beta_1, \ldots,\beta_n>-1$,
\[
\int_{T_n(t)} \prod_{i=0}^n (t_{i+1}-t_i)^{\beta_i} dt_1 \ldots dt_n =\frac{\prod_{i=0}^n\Gamma\left(\beta_i+1\right) \cdot t^{|\beta|+n}}{\Gamma\left(|\beta|+n+1\right)}.
\]
where $|\beta|=\sum_{i=0}^n \beta_i$, and we used the conventions $t_{n+1}=t$ and $t_0=0$.
\end{lemma}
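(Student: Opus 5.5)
We argue by induction on $n$, as the paper indicates. The base case $n=1$ is a direct computation: with $t_0=0$ and $t_2=t$,
\[
\int_0^t t_1^{\beta_0}(t-t_1)^{\beta_1}\,dt_1 = t^{\beta_0+\beta_1+1}\,B(\beta_0+1,\beta_1+1)
=\frac{\Gamma(\beta_0+1)\Gamma(\beta_1+1)}{\Gamma(\beta_0+\beta_1+2)}\, t^{\beta_0+\beta_1+1}.
\]
Here we substitute $t_1=t\theta$ and use the Euler beta identity $B(a,b)=\Gamma(a)\Gamma(b)/\Gamma(a+b)$, which holds for $a,b>0$, that is, for $\beta_0,\beta_1>-1$. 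This matches the claimed formula with $n=1$, since then $|\beta|+n=\beta_0+\beta_1+1$. (The degenerate case $n=0$ is the trivial identity $t^{\beta_0}=\Gamma(\beta_0+1)t^{\beta_0}/\Gamma(\beta_0+1)$, and could also serve as the base.)

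For the inductive step, assume the formula holds for $n-1$ variables with arbitrary exponents $>-1$. We integrate the inner variables $t_1,\ldots,t_{n-1}$ first, with $t_n=s$ fixed. By Fubini (everything is nonnegative), the inner integral runs over $T_{n-1}(s)$ and involves the factors $\prod_{i=0}^{n-2}(t_{i+1}-t_i)^{\beta_i}$. The last of these is $(s-t_{n-1})^{\beta_{n-1}}$, which plays the role of the "$t_{n}=s$" endpoint factor. By the induction hypothesis, this inner integral equals
\[
\frac{\prod_{i=0}^{n-1}\Gamma(\beta_i+1)}{\Gamma\big(\sum_{i=0}^{n-1}\beta_i+n\big)}\; s^{\sum_{i=0}^{n-1}\beta_i+n-1}.
\]
The outer integral is then $\int_0^t s^{a-1}(t-s)^{\beta_n}\,ds$ with $a=\sum_{i=0}^{n-1}\beta_i+n$. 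Note that $a>0$, because each $\beta_i+1>0$. By the base-case computation this equals
\[
\frac{\Gamma(a)\Gamma(\beta_n+1)}{\Gamma(a+\beta_n+1)}\, t^{a+\beta_n}.
\]
The factor $\Gamma(a)$ cancels the denominator from the inner integral. Since $a+\beta_n=|\beta|+n$, we are left with exactly the claimed expression.

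The only point needing care is the index bookkeeping. One must check that the integrand over $T_n(t)$ factors as the $(n-1)$-variable integrand on $T_{n-1}(t_n)$ times $(t-t_n)^{\beta_n}$, with the conventions $t_0=0$ and the endpoint shifted to $t_n$. One must also check that all intermediate exponents stay $>-1$, which ensures integrability and the validity of the beta identity. Both are immediate, so there is no real obstacle.
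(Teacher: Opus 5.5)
Your proof is correct and is exactly the induction the paper has in mind (the paper only says the result follows by induction): you fix $t_n=s$, apply the hypothesis on $T_{n-1}(s)$, and finish with the Euler beta integral, where $a=\sum_{i=0}^{n-1}(\beta_i+1)>0$ guarantees integrability. One small index slip: the inner integrand is $\prod_{i=0}^{n-1}(t_{i+1}-t_i)^{\beta_i}$ with $t_n=s$, not $\prod_{i=0}^{n-2}$, and that is the product your displayed formula actually uses.
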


\begin{theorem}
Let $p\in [2,3)$ be such that $m_p<\infty$. Let $u$ be the mild solution of
\eqref{pAm}. Then, 
\begin{equation}
\label{u-pth-moment}
    \sup_{(t,x)\in [0,T]\times \bR} \bE|u(t,x)|^p <\infty \quad \mbox{for any $T>0$}.
\end{equation}
\end{theorem}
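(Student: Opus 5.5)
We will prove \eqref{u-pth-moment} through the Picard iteration scheme and the moment inequality \eqref{estimate}, closing with an iteration of the resulting integral inequality. Set $u_0(t,x)=1$ and
\[
u_{n+1}(t,x)=1+\int_0^t\int_{\bR}G_{t-s}(x-y)u_n(s,y)L(ds,dy).
\]
By \cite{BN17} (or directly from the chaos expansion \eqref{series}), $u_n(t,x)\to u(t,x)$ in $L^2(\Omega)$. By Fatou's lemma it therefore suffices to bound $\sup_n\sup_{(t,x)\in[0,T]\times\bR}\bE|u_n(t,x)|^p$. Each $u_n$ is predictable. Moreover, the square-integrability hypothesis needed for \eqref{estimate} holds inductively, because $\sup_{t,x}\bE|u_n(t,x)|^2<\infty$ and $\nu_{t,2}<\infty$.

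\textbf{Step 1: the recursive bound.} Let $M_n(t)=\sup_{x\in\bR}\bE|u_n(t,x)|^p$. Applying \eqref{estimate} with $\Phi=u_n$ and using $(a+b)^p\le 2^{p-1}(a^p+b^p)$ gives
\[
\bE|u_{n+1}(t,x)|^p\le 2^{p-1}+2^{p-1}A_{T,p}\int_0^t\int_{\bR}H^{(p)}_{t-s}(x-y)\,M_n(s)\,dyds .
\]
By \eqref{Gtp},
\[
\int_{\bR}H^{(p)}_{t-s}(x-y)dy=c_2(t-s)^{-1/2}+c_p(t-s)^{\frac{1-p}{2}}=:h(t-s).
\]
Here $p<3$ ensures $\frac{1-p}{2}>-1$, so $h$ is integrable near $0$. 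We therefore obtain the renewal-type inequality
\[
M_{n+1}(t)\le C_1+C_2\int_0^t h(t-s)M_n(s)\,ds,\qquad t\in[0,T].
\]
Before the supremum over $x$ is taken, the bound must be shown finite. This follows inductively, since $M_0=1$ and each step preserves boundedness on $[0,T]$.

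\textbf{Step 2: closing the iteration.} Iterating the inequality gives
\[
M_n(t)\le C_1\sum_{k\ge0}C_2^k\,(h^{*k}*1)(t).
\]
On $[0,T]$ we have $h(s)\le C_T\, s^{\frac{1-p}{2}}$, because $s^{-1/2}\le T^{\frac{p-2}{2}}s^{\frac{1-p}{2}}$ for $p\ge2$. Set $\beta=\frac{1-p}{2}\in(-1,-\frac12]$. Lemma~\ref{beta-int}, with $\beta_0=0$ and $\beta_i=\beta$ for $i\ge1$, then gives
\[
(h^{*k}*1)(t)\le C_T^k\,\frac{\Gamma(\beta+1)^k\,t^{k(\beta+1)}}{\Gamma(k(\beta+1)+1)}.
\]
Since $\beta+1>0$, the Gamma function in the denominator grows superexponentially. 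The series is therefore a Mittag-Leffler-type series, which converges uniformly in $n$ and in $t\le T$. This yields \eqref{u-pth-moment}.

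The main obstacle is Step~1. There we must justify applying Rosenthal's inequality, Proposition~\ref{ros1}, to integrands whose $p$-th moment is not yet known to be finite, and we must handle the singular kernel $G^p$, which integrates only because $p<3$. The induction avoids circularity: at each stage $\Phi=u_n$ already has bounded $p$-th moments, so \eqref{estimate} applies. An alternative route would bound the Poisson chaoses via hypercontractivity, but no such tool is available in the Poisson setting, so we rely on the Picard scheme.
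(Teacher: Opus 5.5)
Your proof is correct, but it is organized differently from the paper's in two ways.

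First, the paper works with the increments $J_n=u_n-u_{n-1}$, which satisfy a homogeneous recursion. Iterating \eqref{estimate} gives $\bE|J_n(t,x)|^p\le A_{T,p}^nR_n(t)$. Summability of $\sup_{t,x}\|J_n(t,x)\|_p$ then shows that $(u_n)$ is Cauchy in $L^p(\Omega)$ uniformly on $[0,T]\times\bR$. You instead bound $\bE|u_n|^p$ directly through an inhomogeneous renewal inequality and pass to the limit by Fatou's lemma along an a.s.\ convergent subsequence. This is legitimate, since $u_n$ is exactly the $n$-th partial sum of the chaos expansion \eqref{series}, which converges in $L^2(\Omega)$.

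Second, for the time integrals the paper expands $\prod_{i=1}^n\big[(t_{i+1}-t_i)^{-1/2}+(t_{i+1}-t_i)^{\frac{1-p}{2}}\big]$ into $2^n$ terms and evaluates each one by Lemma~\ref{beta-int}. It then needs monotonicity of $\Gamma$ on $(0,\frac12]$ and beyond its minimum, hence a threshold $N_0$ with the terms $n\le N_0$ treated separately. Your pointwise domination $h(s)\le C_T s^{\frac{1-p}{2}}$ on $(0,T]$, valid because $p\ge 2$, collapses this to a single beta integral per $n$ and one Mittag-Leffler series $E_{\frac{3-p}{2}}\big(C t^{\frac{3-p}{2}}\big)$. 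It gives the same growth rate $t^{\frac{3-p}{2}n}/\Gamma(\frac{3-p}{2}n+1)$, so nothing is lost.

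What the paper's route buys is the slightly stronger conclusion that $u_n(t,x)\to u(t,x)$ in $L^p(\Omega)$ uniformly in $(t,x)$. Yours is shorter, and it directly yields the uniform bound $\sup_n\sup_{(t,x)\in[0,T]\times\bR}\bE|u_n(t,x)|^p$, which is the constant $K_{T,p}^p$ used later in the proof of Theorem~\ref{key-th-Du}.

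One small technical point: $M_n(s)=\sup_x\bE|u_n(s,x)|^p$ is not obviously measurable in $s$. You can either note that $\bE|u_n(s,x)|^p$ does not depend on $x$, by spatial stationarity, or run the induction on deterministic majorants $\psi_0=1$, $\psi_{n+1}=C_1+C_2\,h*\psi_n$.

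Your worry about applying Rosenthal's inequality before finiteness is known is unnecessary. The inequality holds even when its right-hand side is infinite.
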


\begin{proof}
Let $(u_n)_{n\geq 1}$ be the sequence of Picard iterations, defined by: $u_0(t,x)=1$ and
\begin{equation}
\label{Picard}
u_{n+1}(t,x)=1+\int_0^t \int_{\bR} G_{t-s}(x-y) u_n(s,y)L(ds,dy), \quad n\geq 0.
\end{equation}

We denote $J_n(t,x)=u_{n}(t,x)-u_{n-1}(t,x)$ for $n \geq 1$ and $J_0(t,x)=1$. Then 
\begin{equation}
\label{Picard-J}
J_{n+1}(t,x)=\int_0^t \int_{\bR} G_{t-s}(x-y) J_n(s,y)L(ds,dy), \quad \mbox{for all $n\geq 0$}.
\end{equation}

Using \eqref{estimate}, we obtain that for any $n\geq 0$, $t \in  [0,T]$ and $x \in \bR$,
\[
\bE|J_{n+1}(t,x)|^p \leq A_{T,p} \int_0^t\int_{\bR} H_{t-s}^{(p)}(x-y)\bE|J_n(s,y)|^p dyds,
\]
(We used the fact that $A_{t,p}$ is increasing in $t$, since $p<3$.)
By induction, it follows that:
\begin{equation}
\label{bound-Jn}
\bE |J_n(t,x)|^p  \leq A_{T,p}^{n} R_n(t) \quad \mbox{for any $n\geq 1$},
\end{equation}

where 
\begin{equation*}
R_n(t) :=  
\int_{T_n(t)}
\int_{\bR^n}
\prod_{i=1}^n
H_{t_{i+1}-t_i}^{(p)}(x_{i+1}-x_i) d\pmb{x} d\pmb{t}=\int_{T_n(t)}
\prod_{i=1}^n \left( \int_{\bR}H_{t_{i+1}-t_i}^{(p)}(y_i) dy_i \right) d\pmb{t},
\end{equation*}
where we denoted $\pmb{t}=(t_1,\ldots,t_n)$, $\pmb{x}=(x_1,\ldots,x_n)$ and $\pmb{y}=(y_1,\ldots,y_n)$.
It remains to estimate $R_n(t)$.
By \eqref{Gtp},
\begin{equation}
\label{int-H}
 \int_{\bR} H_t^{(p)}(x) dx=c_2 t^{-\frac{1}{2}}+c_p t^{\frac{1-p}{2}}\leq c_2 \Big( t^{-\frac{1}{2}}+t^{\frac{1-p}{2}}  \Big),
\end{equation}
using the fact that $c_p \leq c_2$.
It follows that
\[
R_n(t) \leq {c_2^n} \int_{T_n(t)} \prod_{i=1}^n \left[ (t_{i+1}-t_i)^{-\frac12} + (t_{i+1}-t_i)^{\frac{1-p}{2}} \right] d\pmb{t}.
\]

We use the following identity: for any $a_1^{(1)},a_1^{(2)},\ldots,a_n^{(1)},a_n^{(2)} \in \bR$,
\[
\prod_{i=1}^n\left(a_i^{(1)}+a_i^{(2)}\right) =\sum_{\ell_1,\dots,\ell_n=1}^2
\prod_{i=1}^n a_i^{(\ell_i)}.
\]

We obtain:
\[
R_n(t) \leq
c_2^n
\sum_{\ell_1,\dots,\ell_n=1}^2
\int_{T_n(t)}
\prod_{i=1}^n
(t_{i+1}-t_i)^{\beta_i^{(\pmb{\ell})}}
dt_1\cdots dt_n,
\]
where $\pmb{\ell}=(\ell_1,\ldots,\ell_n)\in\{1,2\}^n$ and
\begin{equation}
\label{def-beta}
\beta_i^{(\pmb{\ell})}
=\left\{
\begin{array}{ll} 
-1/2 & \mbox{if $\ell_i=1$} \\
(1-p)/2 & \mbox{if $\ell_i=2$}
\end{array} \right. \quad \mbox{for any $i=1,\ldots,n$}.
\end{equation}

To evaluate the last integral, we use Lemma \ref{beta-int} and the fact that $p<3$. Hence,
\begin{align*}
R_n(t)
& \leq c_2^n
\sum_{\ell_1,\dots,\ell_n=1}^2
\frac{
\prod_{i=1}^n\Gamma\left(\beta_i^{(\pmb{\ell})}+1\right)
\cdot
t^{\sum_{i=1}^{n}\beta_i^{(\pmb{\ell})}+n}
}{
\Gamma\left(\sum_{i=1}^{n}\beta_i^{(\pmb{\ell})}+n+1\right)
}.
\end{align*}

For any fixed $\pmb{\ell}=(\ell_1,\ldots,\ell_n)\in\{1,2\}^n$,
$\beta_i^{(\pmb{\ell})}+1$ is either $\frac12$ or $\frac{3-p}{2}$. 
Since $0<\frac{3-p}{2} \leq \frac{1}{2}$ and the Gamma function is decreasing on $(0,\frac{1}{2}]$, it follows that 
\[
\prod_{i=1}^n\Gamma\left(\beta_i^{(\pmb{\ell})}+1\right)\leq
\Gamma\left(\frac{3-p}{2}\right)^n.
\]

Since $p\in[2,3)$,we have
\begin{equation}
\label{sum-beta}
\frac{1-p}{2}n
\leq
\sum_{i=1}^n \beta_i^{(\pmb{\ell})}
\leq
-\frac n2
\end{equation}  
Using the fact that the Gamma function is increasing on $(x_0,\infty)$ for some $x_0 \in (1,2)$, it follows that $\Gamma\left(\sum_{i=1}^n \beta_i^{(\pmb{\ell})}+n+1\right) \geq \Gamma \left(\frac{3-p}{2}n+1 \right)$ for all $n > N_0$, for some $N_0\geq 1$ large enough.

We use the elementary inequality:
\begin{equation}
\label{elem}
t^b \leq T^{b-a} t^a \quad \mbox{for any $t \in [0,T]$ and $a \leq b$}.
\end{equation}
Hence, for any $t \in [0,T]$,
\[
t^{\sum_{i=1}^n \beta_i^{(\pmb{\ell})}+n} \leq C_{T,\pmb{\ell},n}t^{\frac{3-p}{2}n},
\]
where $C_{T,\pmb{\ell},n}=T^{\sum_{i=1}^n \beta_i^{(\pmb{\ell})}+n-\frac{3-p}{2}n}$. We bound the constant $C_{T,\pmb{\ell},n}$, using \eqref{sum-beta} and \eqref{elem} . Hence,
\[
t^{\sum_{i=1}^n \beta_i^{(\pmb{\ell})}+n} \leq C_{T,p}^n t^{\frac{3-p}{2}n} \quad \mbox{for any $t \in [0,T]$},
\]
where $C_{T,p}=T^{\frac{p-2}{2}}$ if $T \geq 1$ and $C_{T,p}=1$ if $T \in (0,1)$.

All these bounds are valid for any  fixed $\pmb{\ell}=(\ell_1,\ldots,\ell_n)\in\{1,2\}^n$.
We obtain that:
\[
 R_n(t)\le \left(2c_2C_{T,p}\Gamma\left(\frac{3-p}{2}\right)\right)^n \frac{t^{\frac{3-p}{2}n}}{\Gamma\left(\frac{3-p}{2}n+1\right)} \quad \mbox{for all $n> N_0$}.
\]

We use the fact that for any $a>0$, there exist some constants $C_{a,1},C_{a,2}>0$ depending on $a$ such that $C_{a,1}^n (n!)^a \leq \Gamma(an+1) \leq C_{a,2}^n (n!)^a$ for all $n \geq 1$. Hence,
\begin{equation}
\label{bound-Rn}
    R_n(t)\le C^n\frac{t^{\frac{3-p}{2}n}}{(n!)^{\frac{3-p}{2}}} \quad \mbox{for all $n> N_0$},
\end{equation}
where $C$ is a constant that depends on $(T,p)$. Using \eqref{bound-Jn} and \eqref{bound-Rn}, we obtain that:
\begin{equation}
\label{bound-Jn}
\bE|J_{n}(t,x)|^p  \leq  
(CA_{T,p})^n   \frac{t^{ \frac{3-p}{2} n } }{(n!)^{\frac{3-p}{2}}}, \quad \mbox{for all $n> N_0$}.
\end{equation}

This shows that
\[
\sum_{n\geq 1} \sup_{(t,x)\in [0,T] \times \bR}\|J_n(t,x)\|_p<\infty.
\] 
Hence, $\{u_n(t,x)\}_{n\geq 1}$ is a Cauchy sequence in $L^p(\Omega)$, uniformly in $(t,x) \in [0,T] \times \bR$. Since its limit is the solution $u(t,x)$ of equation \eqref{pAm}, we have:
\[
\sup_{(t,x) \in [0,T] \times \bR} \bE|u_n(t,x)-u(t,x)|^p \to 0 \quad \mbox{as} \quad n\to \infty.
\]

By triangular inequality and \eqref{bound-Jn}, for all $n> N_0$,
\begin{align*}
\|u_n(t,x)\|_p & \leq 1+\sum_{k=1}^n\|J_k(t,x)\|_p \leq M+\sum_{k=N_0+1}^n (CA_{T,p})^{k/p}   \frac{t^{ \frac{3-p}{2p} k } }{(k!)^{\frac{3-p}{2p}}} \\
&  \leq M +\sum_{k\geq 1} (CA_{T,p})^{k/p}   \frac{t^{ \frac{3-p}{2p} k } }{(k!)^{\frac{3-p}{2p}}}=:C_0 <\infty,
\end{align*}
where 
\[
M:=
1+\sum_{k=1}^{N_0}
A_{T,p}^{k/p}
\sup_{t\in[0,T]}R_k(t)^{1/p}
<\infty,
\]
since $R_n$ is continuous. 
The conclusion follows.
\end{proof}

\section{Key estimates on Malliavin derivatives}
\label{section-key}

In this section, we provide some key estimates for the $p$-th moments of the first and second Malliavin derivatives of the solution of equation \eqref{pAm}, which will play an essential role in the proof of the QCLT. These bounds can be extended to Malliavin derivative $D^k u$, for any $k\geq 1$. To simplify the presentation, we include only the cases $k=1$ and $k=2$. 

 Unlike the Gaussian case, these estimates will be given by the function:
\[
g_{t}^{(p)}(x):=G_t(x)+G_t^{2/p}(x), \quad t>0,x\in \bR,
\]
where $p \in [2,3)$ is such that $m_p<\infty$. The restriction $p<3$ is natural and comes from the requirement: 
\[
\int_0^t \int_{\bR} G_{t-s}^p(x-y)dyds<\infty,
\]
which is the necessary and sufficient condition for $\bE|v(t,x)|^p<\infty$, where $v(t,x)=I_1(f_1(\cdot,t,x))$ is the first term in the chaos expansion \eqref{series} of $u(t,x)$ (and also the solution of the stochastic heat equation with additive noise $L$). This restriction can be relaxed by considering an equation with {\em L\'evy colored noise}. We postpone this case for future work.

For both results, we first prove the desired bound for the sequence $(u_n)_{n\geq 0}$ of Picard iterations (given by \eqref{Picard}), then we conclude that the bound holds for the solution itself using a weak convergence argument.

For any integer $n\geq 1$ and for any $0<r<t$, we consider the {\em restricted simplex}:
\[
T_n(r,t)=\{(t_1,\ldots,t_n) \in [r,t]^n; t_1<\cdots<t_n\}.
\]

The next result provides the key estimate for the first Malliavin derivative of the solution.

\begin{theorem}
\label{key-th-Du}
a) For any $t\geq 0$ and $x \in \bR$, $u(t,x)\in {\rm dom}(D)$.

b)  Let $p\in [2,3)$ be such that $m_p<\infty$. For any $0\le r< t\le T$, $x,y\in\bR$ and $z\in\bR_0$,
\begin{equation}
\label{Du-pth-moment}
 \| D_{r,y,z}u(t,x)\|_p\le C_{T,p}|z|g_{t-r}^{(p)}(x-y),
\end{equation}
where $C_{T,p}>0$ is a constant depending on $(T,p)$, which is non-decreasing in $T$.
\end{theorem}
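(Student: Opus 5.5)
We work with the Picard iterations $(u_n)_{n\geq 0}$ of \eqref{Picard} and pass to the limit at the end.

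\textbf{Step 1: recursion for $Du_{n}$.} Since each $u_n(t,x)$ is a finite sum of multiple integrals, $u_n(t,x)\in{\rm dom}(D)$. By Lemma \ref{Ito-Skor}, the stochastic integral in \eqref{Picard} is a Skorohod integral. The Heisenberg commutation principle (Lemma \ref{Heisenberg}) then gives, for $r<t$,
\[
D_{r,y,z}u_{n+1}(t,x)=z\,G_{t-r}(x-y)u_n(r,y)+\int_r^t\int_{\bR}G_{t-s}(x-s')\,D_{r,y,z}u_n(s,s')\,L(ds,ds').
\]
The second integrand is predictable on $[r,t]$, since $D_{r,y,z}u_n(s,\cdot)=0$ for $s<r$. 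Set $\phi_n(t,x)=\|D_{r,y,z}u_n(t,x)\|_p$. We use \eqref{u-pth-moment} for the Picard iterates, which are uniformly bounded in $L^p$ as shown in that proof, together with \eqref{estimate}. This gives
\[
\phi_{n+1}^p(t,x)\le C|z|^pG^p_{t-r}(x-y)+C\int_r^t\int_{\bR}H^{(p)}_{t-s}(x-s')\phi_n^p(s,s')\,ds'ds .
\]

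\textbf{Step 2: iteration.} Unrolling the recursion bounds $\phi_n^p(t,x)$ by $C|z|^p\sum_{k\ge0}A^k\mathcal{I}_k$, where
\[
\mathcal{I}_k=\int_{T_k(r,t)}\int_{\bR^k}\prod_{i=1}^{k}H^{(p)}_{t_{i+1}-t_i}(x_{i+1}-x_i)\,G^p_{t_1-r}(x_1-y)\,d\pmb{x}\,d\pmb{t},
\]
with $t_{k+1}=t$ and $x_{k+1}=x$. We expand each factor $H^{(p)}=G^2+G^p$, giving $2^k$ terms indexed by $\pmb\ell\in\{1,2\}^k$, exactly as in the proof of \eqref{u-pth-moment}. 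By \eqref{Gtp}, every power $G_s^q$ equals $c_q s^{(1-q)/2}G_{s/q}$. The spatial integrals are then convolutions of Gaussians, which produce a single Gaussian $G_{\tau}(x-y)$ with variance
\[
\tau=\frac{t_1-r}{p}+\sum_{i=1}^{k}\frac{t_{i+1}-t_i}{q_i},\qquad q_i\in\{2,p\}.
\]
The time weights are products of powers $(t_{i+1}-t_i)^{(1-q_i)/2}$ and $(t_1-r)^{(1-p)/2}$.

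\textbf{Step 3: the main obstacle.} We must show that each term is bounded by $C^k(k!)^{-\epsilon}(t-r)^{\cdots}$ times something controlled by $(g^{(p)}_{t-r}(x-y))^p\lesssim G^p_{t-r}(x-y)+G^2_{t-r}(x-y)$. The key observation is that $\tau$ is squeezed between $(t-r)/p$ and $(t-r)/2$. Hence
\[
G_\tau(x-y)\le\sqrt{\tfrac{t-r}{2\tau}}\,G_{(t-r)/2}(x-y)+\sqrt{\tfrac{t-r}{p\tau}}\,G_{(t-r)/p}(x-y),
\]
more crudely, $G_\tau(x-y)\le C(t-r)^{-1/2}e^{-(x-y)^2/(2\cdot(t-r)/2)}$. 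This is comparable, via \eqref{Gtp}, to $(t-r)^{1/2}G^2_{t-r}(x-y)$ for $p$ close to 2, or, in the worst case, to the $G^p$ profile. To be safe we use the monotonicity of $\tau\mapsto e^{-w^2/(2\tau)}$, which gives $e^{-w^2/(2\tau)}\le e^{-w^2/(t-r)}$. This matches the exponent of $G^2_{t-r}$, the slower decay, so the spatial factor is bounded by $(t-r)^{1/2}\tau^{-1/2}G^2_{t-r}(x-y)$. The remaining singular factor $\tau^{-1/2}\le (p/(t-r))^{1/2}$ contributes no singularity in the time variables. The time integrals are then beta-type integrals handled by Lemma \ref{beta-int}. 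All exponents exceed $-1$ because $p<3$, and the Gamma asymptotics give summability in $k$ as in \eqref{bound-Rn}. The $k=0$ term gives exactly $|z|^pG^p_{t-r}(x-y)$. Collecting the powers of $(t-r)$, which are nonnegative after cancellation and hence bounded by powers of $T$, yields $\phi_n^p\le C_{T,p}|z|^p(G^p_{t-r}+G^2_{t-r})(x-y)$ uniformly in $n$. Taking $p$-th roots gives \eqref{Du-pth-moment} for $u_n$. I expect the delicate bookkeeping here: checking that the total time exponent is nonnegative for every $\pmb\ell$, which is the ``small miracle'' referred to in the introduction.

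\textbf{Step 4: passage to the limit.} Applying the $p=2$ case of the bound and integrating, $\sup_n\bE\|Du_n(t,x)\|_{\cH}^2\le C m_2\int_0^t\int_{\bR}(g^{(2)}_{t-r})^2\,dy\,dr<\infty$. Since $u_n(t,x)\to u(t,x)$ in $L^2(\Omega)$, Lemma \ref{lem123} shows that $u(t,x)\in{\rm dom}(D)$, which is part a), and that $Du_n(t,x)\to Du(t,x)$ weakly. The bound in part b) then transfers to the limit. We test against $\mathbf{1}_A(\xi)\,\eta$ with $\|\eta\|_{p'}\le1$, use weak lower semicontinuity, and apply Lebesgue differentiation in $\xi=(r,y,z)$. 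The constant is non-decreasing in $T$ because every constant used above is.
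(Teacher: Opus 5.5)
There is a genuine gap in Step~3. Once every term is pushed onto the $G^2_{t-r}$ profile, the total power of $(t-r)$ is \emph{not} nonnegative for every $\pmb{\ell}$.

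First, a small slip. From $\tau\le\frac{t-r}{2}$ one gets $G_\tau(x-y)\le\sqrt{2\pi}\,(t-r)\,\tau^{-1/2}G^2_{t-r}(x-y)\le\sqrt{2\pi p}\,(t-r)^{1/2}G^2_{t-r}(x-y)$. Your factor $(t-r)^{1/2}\tau^{-1/2}$ is off by $(t-r)^{1/2}$.

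With the corrected bound, take a term with $k$ factors, of which $j$ are $G^2$ and $k-j$ are $G^p$. After Lemma~\ref{beta-int} it carries the power
\[
(t-r)^{E},\qquad E=\frac12+\frac{1-p}{2}+\sum_{i=1}^k\frac{3-q_i}{2}=\frac{3-p}{2}\,k+\frac{p-2}{2}\,(j-1).
\]
For $j\ge1$ this gives $E\ge\frac{3-p}{2}k\ge0$, as you hoped.

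For $j=0$, where every factor of $H^{(p)}$ is taken to be $G^p$, you get $E=\frac{3-p}{2}k-\frac{p-2}{2}$. This is negative whenever $k<\frac{p-2}{3-p}$, for example $k=1$ and $p>\frac52$. Your bound for such a term is then $(t-r)^{-\epsilon}G^2_{t-r}(x-y)$ with $\epsilon>0$. This is not dominated by $C\big(G^2_{t-r}+G^p_{t-r}\big)(x-y)$. Along $(x-y)^2=(t-r)\log\frac{1}{t-r}$, both $G^2_{t-r}(x-y)$ and $G^p_{t-r}(x-y)$ stay bounded, while $(t-r)^{-\epsilon}G^2_{t-r}(x-y)\to\infty$ as $r\uparrow t$.

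So \eqref{Du-G} is established only for $p\le\frac52$, not for $p\in(\frac52,3)$. That missing range is exactly the one needed later: the estimate is applied with exponent $2p$ for $p$ close to $\frac32$.

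The missing idea is that the pure-$G^p$ terms must stay on the $G^p$ profile. For $\pmb{\ell}=(2,\ldots,2)$ one has $\tau=\frac{t-r}{p}$ exactly, so the spatial convolution equals
\[
c_p(t_1-r)^{\frac{1-p}{2}}G_{\frac{t-r}{p}}(x-y)=(t_1-r)^{\frac{1-p}{2}}(t-r)^{\frac{p-1}{2}}G^p_{t-r}(x-y).
\]
Lemma~\ref{beta-int} then yields $(t-r)^{\frac{3-p}{2}k}G^p_{t-r}(x-y)$, with no negative power.

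This is the paper's split into two cases:
\begin{itemize}
\item Case~1: some $\ell_{i_0}=1$. The extra $\frac12$ from $\beta_{i_0}=-\frac12$ pays for the loss incurred in passing to $G^2_{t-r}$.
\item Case~2: all $\ell_i=2$. This term is computed exactly on the $G^p$ profile.
\end{itemize}
The ``small miracle'' is that each term lands on the profile matching its own time power, not that every term fits under $G^2_{t-r}$.

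With this split, the rest of your outline goes through: the Rosenthal recursion, the Gamma asymptotics, and the limit passage. For the last step the paper simply invokes Lemma~A.1 of \cite{BS26}.
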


\begin{proof} 
Note that $Du_0=0$. Moreover, for all  $n\geq 0, u_n \in {\rm dom}(D)$, as its chaos expansion is finite by the induction hypothesis.
We will prove below that for any $p\in [2,3)$ such that $m_p<\infty$, and for any $0 \leq r<t\leq T $, $x,y \in \bR$, $z\in \bR_0$ and $n \geq 1$, 
\begin{equation}
\label{Du-G}
\bE\left|D_{r,y,z}u_n(t,x)\right|^p \leq C_{T,p}^p |z|^p H_{t-r}^{(p)}(x-y),
\end{equation}
where $C_{T,p}>0$ is a constant depending on $(T,p)$ which is non-decreasing in $T$. 
In particular, for $p=2$, this implies that
\[
\sup_{n\geq 1}\sup_{(t,x)\in [0,T] \times \bR}\bE \|D u_n(t,x)\|_{\cH}^2 <\infty.
\]

Indeed, by \eqref{Du-G} and \eqref{int-H}, we obtain that for any $n \geq 1$, $t \in [0,T]$ and $x \in \bR$,
\begin{align*}
\bE \|D u_n(t,x)\|_{\cH}^2 & =\int_{0}^t  \int_{\bR} \int_{\bR_0} \bE |D_{r,y,z} u_n(t,x)|^2 drdy\nu(dz) \leq m_2 C_{T,2}^2  \int_0^t \int_{\bR} H_{t-r}^{(2)}(x-y) dydr\\
& \leq m_2 C_{T,2}^2 c_2 \int_0^t 2(t-r)^{-1/2} dr=4m_2 C_{T,2}^2c_2  t^{1/2}.
\end{align*}
We apply lemma \ref{lem123} to $F_n=u_n(t,x)$ and $F=u(t,x)$ for fixed $(t,x)\in \bR_{+}\times \bR$. We conclude that
$u(t,x)\in {\rm dom}(D)$ and $\{Du_n(t,x)\}_{n\geq 1}$ converges to $Du(t,x)$ in the weak topology
$L^2(\Omega;\mathcal{H})$. This proves part a).

Taking power $1/p$ in \eqref{Du-G} and using the inequality $(x+y)^a \leq x^a+y^a$ for any $x,y>0$ and $a\in (0,1)$, we infer that for any $0 \leq r<t\leq T$, $x,y \in \bR$ and $z\in \bR_0$,
\[
\sup_{n\geq 0}\|D_{r,y,z}u_n(t,x)\|_p \leq C_{T,p} |z| g_{t-r}^{(p)}(x-y).
\]
Then, the conclusion in part b) follows by Lemma A.1 of \cite{BS26}.

\medskip

We now prove \eqref{Du-G}. Similarly to the proof of Proposition 3.1 of \cite{BZ26}, it can be proved that for $\fm$-almost $\xi \in {\bf Z}$, $D_{\xi}u_n$ has a predictable modification (denoted also $D_{\xi}u_n$). We work with this modification. Using the  recurrence relation \eqref{Picard}, Heisenberg commutation principle (Lemma \ref{Heisenberg}) and the fact that It\^o integral coincides with Skorohod integral if the integrand is {\em predictable} (Lemma \ref{Ito-Skor}), it follows that
 \begin{equation}
 \label{Du}
D_{r,y,z}u_{n+1}(t,x)=G_{t-r}(x-y) z u_n(r,y)+ \int_r^t\int_{\bR} G_{t-s_1}(x-y_1)D_{r,y,z}u_n(s_1,y_1) L(ds_1,dy_1).
    \end{equation}

Applying \eqref{estimate} with $\Phi=Du_n$, we obtain that for any $0\leq r\leq t$, $x,y \in \bR$ and $z \in \bR_{0}$,
\begin{align*}
& \bE\left|D_{r,y,z}u_{n+1}(t,x)\right|^p \\
& \leq 2^{p-1} \left\{ G_{t-r}^p(x-y)|z|^p \bE |u_n(r,y)|^p+ \bE\left|
\int_r^t\int_{\bR} G_{t-s_1}(x-y_1) D_{r,y,z}u_n(s_1,y_1)
L(ds_1,dy_1) \right|^p \right\} \\
&\le 2^{p-1}\left\{ G_{t-r}^p(x-y)|z|^p K_{T,p}^p+ A_{T,p} \int_r^t\int_{\bR} H_{t-s_1}^{(p)}(x-y_1) \bE|D_{r,y,z}u_n(s_1,y_1)|^p ds_1dy_1
\right\},
\end{align*}    
where $K_{T,p}^p=\sup_{n\geq 0}\sup_{(t,x)\in [0,T] \times \bR}\bE|u_n(t,x)|^p$. By induction, we infer that for any $n\geq 1$,
\begin{equation}
\label{Dun-bound}
\bE\left|D_{r,y,z}u_{n}(t,x)\right|^p \leq 2^{p-1}|z|^p K_{T,p}^p \Big\{ G_{t-r}^p(x-y) + \sum_{k=1}^{n-1}\left(2^{p-1}A_{T,p}\right)^k S_k \Big\},
\end{equation}
where
\begin{equation}
\label{def-Sn}
S_n:=S_n(r,t,x-y) =   \int_{T_n(r,t)} \int_{\bR^n} \prod_{i=1}^{n} H_{t_{i+1}-t_i}^{(p)}(x_{i+1}-x_i) G_{t_1-r}^p(x_1-y) d\pmb{x} d\pmb{t},
\end{equation}
with $t_{n+1}=t$ and $x_{n+1}=x$. Using the change of variables $y_i=x_{i+1}-x_i$ for $i=1,\dots,n$, we obtain 
\begin{equation*}
\begin{aligned}
    S_n &= \int_{T_n(r,t)} \int_{\bR^n} \prod_{i=1}^{n} H_{ t_{i+1}-t_i }^{(p)}(y_i) G_{t_1-r}^p(x-y-y_1-\dots-y_n)d\pmb{y}  d\pmb{t}\\  
    &=\sum_{\ell_1,\dots,\ell_n=1}^2 \int_{T_n(r,t)} \int_{\bR^n} \prod_{i=1}^{n} G_{t_{i+1}-{t_i}}^{(\ell_i)}(y_i)G_{t_1-r}^p(x-y-\sum_{i=1}^n y_i) d\pmb{y}  d\pmb{t},
\end{aligned}
\end{equation*}
where $G_{t}^{(1)}(x) :=G_{t}^{2}(x)$ and $G_{t}^{(2)}(x)  :=G_{t}^{p}(x)$. For any $\pmb{\ell}=(\ell_1,\ldots,\ell_n) \in \{1,2\}^n$ fixed, we consider a sequence $(X_1^{(\pmb{\ell})},\ldots, X_n^{(\pmb{\ell})})$ of independent random variables such that
\[
X_i^{(\pmb{\ell})}\sim \cN\left(0,\frac{t_{i+1}-t_i}{2}\right) \quad \mbox{if $\ell_i=1$} \quad \mbox{and} \quad X_i^{(\pmb{\ell})}\sim \cN\left(0,\frac{t_{i+1}-t_i}{p}\right) \quad \mbox{if $\ell_i=2$},
\]
and we let $f_{X_i^{(\pmb{\ell})}}(x)$ be the density of $X_i^{(\pmb{\ell})}$. We let $\beta_i^{(\pmb{\ell})}$ be given by \eqref{def-beta}.

Using the first relation in \eqref{Gtp} and the fact that $c_p\leq c_2$, it follows that
\begin{align}
\nonumber
 S_n &\le c_2^n \sum_{\ell_1,\dots,\ell_n=1}^2 \int_{T_n(r,t)}  \prod_{i=1}^{n} (t_{i+1}-t_i)^{\beta_i^{(\pmb{\ell})}} \left( \int_{\bR^n} \prod_{i=1}^{n} f_{X_i^{(\pmb{\ell})}}(y_i)G_{t_1-r}^p(x-y-\sum_{i=1}^n y_i) d\pmb{y} \right) d\pmb{t}\\
 \nonumber
  &= c_2^n \sum_{\ell_1,\dots,\ell_n=1}^2 \int_{T_n(r,t)}  \prod_{i=1}^{n} (t_{i+1}-t_i)^{\beta_i^{(\pmb{\ell})}} \bE\left[G_{t_1-r}^p(x-y-\sum_{i=1}^n X_i^{(\pmb{\ell})}) \right] d\pmb{t}\\
 \label{Snt}
 &=: c_2^{n} \sum_{\ell_1,\dots,\ell_n=1}^2  \cI_n(\pmb{\ell}), \quad \mbox{where} \quad \pmb{\ell}=(\ell_1,\ldots,\ell_n). 
\end{align}

We estimate the integral $\cI_n(\pmb{\ell})$.
Note that $\sum_{i=1}^nX_i^{(\pmb{\ell})}\sim \cN(0,\gamma^{(\pmb{\ell})})$, where 
\begin{equation*}
 \gamma^{(\pmb{\ell})}:= \sum_{i=1,l_i=1}^n \frac{t_{i+1}-t_i}{2} +  \sum_{i=1,l_i=2}^n \frac{t_{i+1}-t_i}{p}.
\end{equation*}
An important observation is that 
\begin{equation}
\label{gam-bound}
\frac{t-t_1}{p} \leq \gamma^{(\pmb{\ell})} \leq \frac{t-t_1}{2} \quad \mbox{for any $\pmb{\ell} \in \{1,2\}^n$}.
\end{equation}

Using the semigroup property, it follows that
\begin{align*}
& \bE\left[G_{t_1-r}^p(x-y-\sum_{i=1}^n X_i^{(\pmb{\ell})}) \right]= \int_\bR G_{t_1-r}^p(x-y-z)G_{\gamma^{(\pmb{\ell})}}(z) dz\\
& \quad =c_p (t_1-r)^{\frac{1-p}{2}} \int_\bR G_{ \frac{t_1-r}{p} }(x-y-z)G_{\gamma^{(\pmb{\ell})}}(z) dz=c_p (t_1-r)^{\frac{1-p}{2}} G_{\frac{t_1-r}{p}+\gamma^{(\pmb{\ell})}}(x-y)\\
& \quad \le c_p (t_1-r)^{\frac{1-p}{2}}    \sqrt{\frac{\frac{t-r}{2}}{\frac{t_1-r}{p}+\gamma^{(\pmb{\ell})}}} G_{\frac{t-r}{2}}(x-y) \le c_p (t_1-r)^{\frac{1-p}{2}} \sqrt{\frac{p}{2}} G_{\frac{t-r}{2}}(x-y)\\
& \quad =\frac{c_p}{c_2} \sqrt{\frac{p}{2}}  (t_1-r)^{\frac{1-p}{2}}  (t-r)^{\frac{1}{2}} G_{t-r}^2(x-y).
\end{align*}
where for the first inequality we used the upper bound in \eqref{gam-bound} and the inequality 
\begin{equation}
\label{sqrt(t over s)}   
G_s(x)\le \sqrt{\frac{t}{s}}G_t(x) \quad \mbox{for} \quad 0<s\le t,
\end{equation}
and for the second inequality we used the lower bound in \eqref{gam-bound}. Recall also that $c_p\leq c_2$. Thus, for any $n \geq 1$ and $\pmb{\ell} \in \{1,2\}^n$,
\begin{align}
\nonumber
\cI_n(\pmb{\ell}) &\leq  \sqrt{\frac{p}{2}} (t-r)^{\frac{1}{2}}G_{t-r}^2(x-y)\int_{T_n(r,t)}  \prod_{i=1}^{n} (t_{i+1}-t_i)^{\beta_i^{(\pmb{\ell})}} (t_1-r)^{\frac{1-p}{2}} d\pmb{t}\\
\nonumber
& = \sqrt{\frac{p}{2}} (t-r)^{\frac{1}{2}}G_{t-r}^2(x-y)\int_{T_n(t-r)}  (t-r-s_n)^{\beta_n^{(\pmb{\ell})}}\prod_{i=1}^{n-1} (s_{i+1}-s_i)^{\beta_i^{(\pmb{\ell})}} s_1^{\frac{1-p}{2}} d\pmb{s} \\
\label{bound-Il}
& = \sqrt{\frac{p}{2}} (t-r)^{\frac{1}{2}}G_{t-r}^2(x-y) \frac{\prod_{i=0}^n\Gamma\left(\beta_i^{\pmb{(\ell)}}+1\right) \cdot (t-r)^{\sum_{i=0}^n\beta_i^{\pmb{(\ell)}}+n}}{\Gamma\left(\sum_{i=0}^n\beta_i^{(\pmb{\ell})}+n+1\right)},
\end{align}
where for the last equality we used Lemma \ref{beta-int} with $\beta_0^{(\pmb{\ell})}=\frac{1-p}{2}$.

\medskip

We fix $n\geq 1$ and $\pmb{\ell} \in \{1,2\}^n$.  We consider two cases.

\medskip
{\em Case 1.} There exists $i_0=1,\ldots,n$ such that $\ell_{i_0}=1$. Then $\beta_{i_0}^{(\pmb{\ell})}=-1/2$. 
Since
$\beta_i^{(\pmb{\ell})}+1 \in \{\frac{1}{2}, \frac{3-p}{2}\}$ for $i=1,\ldots,n$, 
$\beta_0^{(\pmb{\ell})}+1=\frac{3-p}{2}$, and
the Gamma function is decreasing
on $(0,\frac{1}{2})$, we have
\begin{equation}
\label{Gam1}
\prod_{i=0}^n\Gamma\left(\beta_i^{(\pmb{\ell})}+1\right)
\leq
\Gamma\left(\frac{3-p}{2}\right)^{n+1}.
\end{equation}

Hence,
\[
\sum_{i=0}^n\beta_i^{(\pmb{\ell})}=\frac{1-p}{2}+\sum_{i=1,i\not=i_0}^n \beta_i^{(\pmb{\ell})}+ \beta_{i_0}^{(\pmb{\ell})}\geq \frac{1-p}{2}n-\frac{1}{2}
\]
and
\begin{equation}
\label{beta-ineq1}
\sum_{i=0}^n\beta_i^{(\pmb{\ell})} +n \geq \frac{3-p}{2}n-\frac{1}{2}.
\end{equation}

Using again the elementary inequality \eqref{elem}, we get, for any $0\leq r<t\leq T$,
\[
(t-r)^{\sum_{i=0}^n\beta_i^{(\pmb{\ell})}+n}
\leq
T^{\sum_{i=0}^n\beta_i^{(\pmb{\ell})}+n-\frac{3-p}{2}n+\frac{1}{2}}
(t-r)^{\frac{3-p}{2}n-\frac{1}{2}}
\]

Since $\beta_i^{(\pmb{\ell})} \leq -\frac{1}{2}$ for all $i=1,\ldots,n$, we have the upper bound:
\[
\sum_{i=0}^n\beta_i^{(\pmb{\ell})}+n\leq \frac{1-p}{2}+\frac{n}{2}
\]
Hence, the exponent of $T$ is bounded above by $\frac{1-p}{2}+\frac{n}{2}-\frac{3-p}{2}n+\frac{1}{2}=\frac{p-2}{2}(n-1)\leq \frac{p-2}{2}n$. It follows that
\begin{equation}
\label{ineq-t}
(t-r)^{\sum_{i=0}^n\beta_i^{(\pmb{\ell})}+n} \leq B_{T,p}^n (t-r)^{\frac{3-p}{2}n-\frac{1}{2}},
\end{equation}
where $B_{T,p}=T^{\frac{p-2}{2}}$ if $T \geq 1$ and $B_{T,p}=1$ if $T \in (0,1)$.

Recall that the Gamma function is increasing on $(x_0,\infty)$ for $x_0 \approx 1.46$. Hence there exists an integer $N_0 \geq 1$ (depending on $p$) such that 
$\frac{3-p}{2}n+\frac{1}{2}>x_0$ for all $n>N_0$. Using \eqref{beta-ineq1}, it follows that for all $n>N_0$,
\begin{equation}
\label{Gam2}
\Gamma\left(\sum_{i=0}^n\beta_i^{(\pmb{\ell})}+n+1\right) \geq
\Gamma\left(\frac{3-p}{2}n-\frac{1}{2}+1\right) \geq C_p^n (n!)^{\frac{3-p}{2}},
\end{equation}
for some constant $C_p>0$ depending on $p$. For the last inequality, we used that fact that for any $a>0$ and $b\in \bR$, there exist some constants $c_{a,b}, C_{a,b}>0$ depending on $(a,b)$ such that 
\begin{equation}
\label{gam-prop}
c_{a,b}^n (n!)^a \leq \Gamma(an+b+1) \leq C_{a,b}^n (n!)^a \quad \mbox{for all} \quad n\geq 1.
\end{equation}
Relation \eqref{gam-prop} is a consequence of the Stirling formula:
\[
\Gamma(an+1) \sim (n!)^a a^{an} a^{1/2} (2\pi n)^{\frac{1-a}{2}} \quad \mbox{for any} \quad a>0,
\]
and the fact that 
\[
\Gamma(n+b) \sim \Gamma(n) n^b \quad \mbox{for any $b \in \bR$}.
\]

We use estimate \eqref{bound-Il}, combined with \eqref{Gam1}, \eqref{ineq-t} and \eqref{Gam2}. 
Consequently, 
\begin{equation}
\label{case1}
\cI_n(\pmb{\ell}) \leq \sqrt{\frac{p}{2}}  G_{t-r}^2(x-y) \frac{   \Gamma(\frac{3-p}{2})^{n+1} B_{T,p}^n (t-r)^{\frac{3-p}{2}n}}{C_p^n (n!)^{\frac{3-p}{2}}}, \quad \mbox{if $n>N_0$}.
\end{equation}

For $n\leq N_0$, we use the bound \eqref{bound-Il}, combined with \eqref{Gam1} and \eqref{ineq-t}. Hence,
\begin{align}
\nonumber
\mathcal{I}_n(\pmb{\ell})
&\leq \sqrt{\frac{p}{2}}G^2_{t-r}(x-y)\frac{  \Gamma(\frac{3-p}{2})^{n+1}  B^n_{T,p}(t-r)^{\frac{3-p}{2}n}}
{\Gamma(\sum_{i=0}^n\beta_i^{(\pmb{\ell})}+n+1)}
\\
\label{case12}
&\leq \sqrt{\frac{p}{2}} \Gamma(\frac{3-p}{2})H_{t-r}^{(p)}(x-y)C_0^n(t-r)^{\frac{3-p}{2}n}K_{N_0}, \quad \mbox{if $n\leq N_0$},
\end{align}
where $K_{N_0}^{-1} = \min _{1\leq n\leq N_0, \ell_1,...\ell_n \in \{1,2\}} \Gamma(\sum_{i=0}^n\beta_i^{(\pmb{\ell})}+n+1)$, 
and $C_0$ is a constant which depends on $(T,p)$ and is non-decreasing in $T$.

\medskip

{\em Case 2.} $\ell_i=2$ for all $i=1,\ldots,n$. Then $\beta_i^{(\pmb{\ell})}=\frac{1-p}{2}$ for all $i=1,\ldots,n$ and 
\[
\gamma^{(\pmb{\ell})}=\frac{t-t_1}{p}.
\]
Using the semigroup property, it follows that
\begin{align*}
& \bE\left[G_{t_1-r}^p(x-y-\sum_{i=1}^n X_i^{(\pmb{\ell})}) \right]= \int_\bR G_{t_1-r}^p(x-y-z)G_{\frac{t-t_1}{p}}(z) dz\\
& \quad =c_p (t_1-r)^{\frac{1-p}{2}} \int_\bR G_{ \frac{t_1-r}{p} }(x-y-z)G_{\frac{t-t_1}{p}}(z) dz=c_p (t_1-r)^{\frac{1-p}{2}} G_{\frac{t-r}{p}}(x-y)\\
& \quad  =(t_1-r)^{\frac{1-p}{2}}  (t-r)^{-\frac{1-p}{2}}  G_{t-r}^p(x-y).
\end{align*}
Hence, using Lemma \ref{beta-int} and property \eqref{gam-prop} of the Gamma function, we have for any $n\geq 1$,
\begin{align}
\nonumber
\cI_n(\pmb{\ell}) &= (t-r)^{-\frac{1-p}{2}} G_{t-r}^p(x-y) \int_{T_n(r,t)} \prod_{i=1}^{n} (t_{i+1}-t_i)^{\frac{1-p}{2}} (t_1-r)^{\frac{1-p}{2}} d\pmb{t}\\
\label{case2}
& = G_{t-r}^p(x-y) \frac{\Gamma\left(\frac{3-p}{2}\right)^{n+1} (t-r)^{\frac{3-p}{2}n}}{\Gamma\left(\frac{3-p}{2}n+\frac{1-p}{2}+1\right)} \leq  G_{t-r}^p(x-y) \frac{\Gamma\left(\frac{3-p}{2}\right)^{n+1} (t-r)^{\frac{3-p}{2}n}}{C_p^n (n!)^{\frac{3-p}{2}}}.
\end{align}

Summarizing \eqref{case1} and \eqref{case2}, for all $n>N_0$, we have: 
\begin{equation}
\label{sum-case12}
\cI_n(\pmb{\ell}) \leq H_{t-r}^{(p)}(x-y) \frac{C_0^n (t-r)^{\frac{3-p}{2}n}}{(n!)^{\frac{3-p}{2}}} \quad \mbox{for any $\pmb{\ell} \in \{1,2\}^n$}.
\end{equation}

Whereas for $n\leq N_0$, we use \eqref{case12} and \eqref{case2} to infer that
\[
\cI_n(\pmb{\ell}) \leq C_1^n H_{t-r}^{(p)}(x-y) (t-r)^{\frac{3-p}{2}n} \quad \mbox{for any $\pmb{\ell} \in \{1,2\}^n$}.
\]

Returning to \eqref{Snt}, it follows that 
\begin{align*}
S_n & \leq C^n H_{t-r}^{(p)}(x-y) \frac{(t-r)^{\frac{3-p}{2}n}}{(n!)^{\frac{3-p}{2}}} \quad \mbox{if $n>N_0$}, \\
S_n & \leq C^n H_{t-r}^{(p)}(x-y)  (t-r)^{\frac{3-p}{2}n}\quad \mbox{if $n\leq N_0$},
\end{align*}
where $C>0$ is a constant which depends on $(T,p)$ and is non-decreasing in $T$.

We now return to \eqref{Dun-bound}. We use the bound $G_{t-r}^p(x-y) \leq H_{t-r}^{(p)}(x-y)$ for the first term. 
It follows that for all $n \geq 1$,
\begin{align*}
& \bE|D_{r,y,z} u_{n}(t,x)|^p \\
&\leq 2^{p-1} |z|^p K_{T,p}^p H_{t-r}^{(p)}(x-y)\left\{
\sum_{k=0}^{N_0} (2^{p-1} A_{T,p})^k C^k(t-r)^{\frac{3-p}{2}k} +\sum_{k\geq N_0+1} (2^{p-1} A_{T,p})^k   \frac{C^k (t-r)^{\frac{3-p}{2}k}}{(k!)^{\frac{3-p}{2}}} \right\}\\
& \le C_{T,p}^p |z|^p H_{t-r}^{(p)}(x-y).
\end{align*}
This concludes the proof of \eqref{Du-G}.
\end{proof}

The next result gives the key estimate for the second Malliavin derivative.
 
\begin{theorem}
\label{key-th-D2u}
a) For any $t>0$ and $x \in \bR$, $u(t,x) \in {\rm dom}(D^2)$. 

b) Let $p\in [2,3)$ be such that $m_p<\infty$.
For any $0\leq t \leq T$, $x \in \bR$ and $\xi_1=(r_1,y_1,z_1),\xi_2=(r_2,y_2,z_2)\in \bf{Z}$ with $r_1,r_2 \in [0,t]$,
\begin{align*}
\| D^2_{\xi_1,\xi_2}u(t,x)  \|_p
\le 
\begin{cases}
C_{T,p}'|z_1z_2|g_{t-r_2}^{(p)}(x-y_2)g_{r_2-r_1}^{(p)}(y_2-y_1),
& \text{if } r_1<r_2, \\[0.3em]
C_{T,p}'|z_1z_2| g_{t-r_1}^{(p)}(x-y_1)g_{r_1-r_2}^{(p)}(y_1-y_2),
& \text{if } r_2<r_1.
\end{cases}
\end{align*}
\end{theorem}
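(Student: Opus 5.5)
We follow the scheme of the proof of Theorem~\ref{key-th-Du}: we first prove the bound for the Picard iterations $u_n$, uniformly in $n$, and then pass to the limit. By symmetry of $D^2$ it suffices to treat $r_1<r_2$. Applying $D_{\xi_1}$ to relation \eqref{Du} (with $\xi_2=(r_2,y_2,z_2)$ in the role of $(r,y,z)$), and using Lemma~\ref{Heisenberg} together with Lemma~\ref{Ito-Skor} (after choosing predictable modifications of $D^2u_n$ as in \cite{BZ26}), we get the recursion
\[
D^2_{\xi_1,\xi_2}u_{n+1}(t,x)=G_{t-r_2}(x-y_2)z_2D_{\xi_1}u_n(r_2,y_2)+\int_{r_2}^t\int_{\bR}G_{t-s}(x-y')D^2_{\xi_1,\xi_2}u_n(s,y')L(ds,dy').
\]
The term in which $D_{\xi_1}$ hits the integrand at time $r_1$ vanishes, because $D_{\xi_2}u_n(r_1,\cdot)=0$ for $r_1<r_2$ by adaptedness.

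We take $p$-th moments, apply \eqref{estimate} to the stochastic integral, and bound the first term by \eqref{Du-G}. This gives
\[
\bE|D^2u_{n+1}(t,x)|^p\le 2^{p-1}\Big\{|z_2|^pG^p_{t-r_2}(x-y_2)C_{T,p}^p|z_1|^pH^{(p)}_{r_2-r_1}(y_2-y_1)+A_{T,p}\int_{r_2}^t\int_{\bR} H^{(p)}_{t-s}(x-y')\bE|D^2u_n(s,y')|^p dy'ds\Big\}.
\]
Iterating, exactly as in \eqref{Dun-bound}, gives
\[
\bE|D^2u_n(t,x)|^p\le C|z_1z_2|^pH^{(p)}_{r_2-r_1}(y_2-y_1)\Big\{G^p_{t-r_2}(x-y_2)+\sum_{k\ge1}(2^{p-1}A_{T,p})^kS_k(r_2,t,x-y_2)\Big\}.
\]
Here the factor $H^{(p)}_{r_2-r_1}(y_2-y_1)$ does not depend on the integration variables, so it factors out. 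The sums $S_k$ are the same quantities \eqref{def-Sn} already estimated in the proof of Theorem~\ref{key-th-Du}: $S_k\le C^kH^{(p)}_{t-r_2}(x-y_2)(t-r_2)^{\frac{3-p}{2}k}/(k!)^{\frac{3-p}{2}}$ for large $k$, with a cruder bound for $k\le N_0$. The series therefore converges and we obtain
\[
\bE|D^2_{\xi_1,\xi_2}u_n(t,x)|^p\le C'^p|z_1z_2|^pH^{(p)}_{t-r_2}(x-y_2)H^{(p)}_{r_2-r_1}(y_2-y_1)
\]
uniformly in $n$. Taking $p$-th roots and using $(a+b)^{1/p}\le a^{1/p}+b^{1/p}$ turns $(H^{(p)})^{1/p}$ into $g^{(p)}$, since $(G^2+G^p)^{1/p}\le G^{2/p}+G$.

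For part a), we specialize to $p=2$. Then $H^{(2)}=2G^2$, and integrating over $\xi_1,\xi_2$ gives
\[
\sup_n\bE\|D^2u_n(t,x)\|^2_{\cH^{\otimes2}}\le Cm_2^2\int_{0<r_1<r_2<t}(t-r_2)^{-1/2}(r_2-r_1)^{-1/2}dr_1dr_2<\infty.
\]
Lemma~\ref{lem123} with $k=2$ then shows that $u(t,x)\in{\rm dom}(D^2)$ and that $D^2u_n\to D^2u$ weakly. The pointwise $L^p$ bound for $u$ itself then follows from the uniform bound on $u_n$ by the weak-limit argument of Lemma A.1 of \cite{BS26}, as in Theorem~\ref{key-th-Du}.

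The main obstacle is justifying the recursion for $D^2u_{n+1}$. This requires applying the Heisenberg principle to an integrand that is itself a Malliavin derivative, checking its integrability hypothesis, and confirming predictability of $D_{\xi_1}D_{\xi_2}u_n$ in the remaining variables. We would handle it by induction on $n$, using that each $u_n$ has a finite chaos expansion, together with the already proven bound \eqref{Du-G}. The analytic part is comparatively easy, since it reduces to the already established estimates on $S_k$.
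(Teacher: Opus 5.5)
Your proposal is correct and follows the paper's proof essentially step for step. It uses the same recursion \eqref{D2u} for $D^2_{\xi_1,\xi_2}u_{n+1}$ obtained via Lemmas~\ref{Heisenberg} and \ref{Ito-Skor}, bounds the free term by \eqref{Du-G} with the factor $H^{(p)}_{r_2-r_1}(y_2-y_1)$ pulled out, reuses the estimates on $S_k(r_2,t,x-y_2)$ from Theorem~\ref{key-th-Du}, and passes to the limit with Lemma~\ref{lem123} ($k=2$, $p=2$) and Lemma A.1 of \cite{BS26}.
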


\begin{proof} 
We proceed as in the proof of Theorem \ref{key-th-Du}. Note that $D^2 u_1=D^2 u_0=0$. For the same reasons as in the proof of Theorem \ref{key-th-Du}, for all $n\geq 0$, $u_n \in Dom(D^2)$. We will prove below that for any $p\in [2,3)$ such that $m_p<\infty$, $0\leq t \leq T$, $x \in \bR$ and $\xi_1=(r_1,y_1,z_1),\xi_2=(r_2,y_2,z_2)\in \bf{Z}$ with $0\leq r_1<r_2 \leq t$, and $n \geq 2$, 
\begin{equation}
\label{D2u-G}
\bE\left|D_{\xi_1,\xi_2}^2 u_n(t,x)\right|^p \leq (C_{T,p}')^p |z_1z_2|^p H_{t-r_2}^{(p)}(x-y_2)H_{r_2-r_1}^{(p)}(y_2-y_1),
\end{equation}
where $C_{T,p}'>0$ is a constant depending on $(T,p)$ which is non-decreasing in $T$. 
In particular, for $p=2$, this implies that
\[
\sup_{n\geq 2}\sup_{(t,x)\in [0,T] \times \bR}\bE \|D^2 u_n(t,x)\|_{\cH^{\otimes 2}}^2 <\infty.
\]

Indeed, by \eqref{D2u-G} and \eqref{int-H}, we obtain that for any $n \geq 2$, $t \in [0,T]$ and $x \in \bR$,
\begin{align*}
& \bE \|D^2 u_n(t,x)\|_{\cH^{\otimes 2}}^2 =\int_{{\bf Z}^2} \bE |D_{\xi_1,\xi_2}^2 u_n(t,x)|^2 \fm(d\xi_1) \fm(d\xi_2) \\
&\quad \leq 2 m_2^2 (C_{T,2}')^2 \int_{0<r_1<r_2<t} \int_{\bR^2}  H_{t-r_2}^{(2)}(x-y_2) H_{r_2-r_1}^{(2)}(y_2-y_1)  d\pmb{y}d\pmb{r}\\
& \quad \leq 8 m_2^2 (C_{T,2}')^2 c_2^2\int_{0<r_1<r_2<t} (t-r_2)^{-1/2} (r_2-r_1)^{-1/2}d\pmb{r}\\
& \quad =8\pi m_2^2(C_{T,2}')^2 c_2^2 \, t ,
\end{align*}
here $\pmb{r}=(r_1,r_2)$ and $\pmb{y}=(y_1,y_2)$.
We apply Lemma \ref{lem123} to $F_n=u_n(t,x)$ and $F=u(t,x)$ for fixed $(t,x)\in \bR_{+}\times \bR$. We conclude that
$u(t,x)\in {\rm dom}(D^2)$ and $\{D^2u_n(t,x)\}_{n\geq 1}$ converges to $D^2u(t,x)$ in the weak topology
$L^2(\Omega;\mathcal{H}^{\otimes  2 })$. This proves part a).

Taking power $1/p$ in \eqref{D2u-G} and using the inequality $(x+y)^a \leq x^a+y^a$ for any $x,y>0$ and $a\in (0,1)$, we infer that for any $\xi_1=(r_1,y_1,z_1),\xi_2=(r_2,y_2,z_2)\in \bf{Z}$ with $0\leq r_1<r_2 \leq t$, 
\[
\sup_{n\geq 2}\|D_{\xi_1,\xi_2}^2 u_n(t,x)\|_p \leq C_{T,p}' |z_1z_2| g_{t-r_2}^{(p)}(x-y_2)  g_{r_2-r_1}^{(p)}(y_2-y_1).
\]
Then, the conclusion in part b) follows by Lemma A.1 of \cite{BS26}.

\medskip

We now prove \eqref{D2u-G}. Similarly to the proof of Proposition 3.1 of \cite{BZ26}, it can be proved that for $\fm\times\fm$-almost $(\xi_1,\xi_2)\in{\bf Z}^2$,
$D_{\xi_1,\xi_2}^2u_n$ has a predictable modification (denoted also $D_{\xi_1,\xi_2}^2u_n$). We work with this modification. Using the  recurrence relation \eqref{Picard}, and Lemmas \ref{Heisenberg} and \ref{Ito-Skor}, we obtain: for any $\xi_1=(r_1,y_1,z_1),\xi_2=(r_2,y_2,z_2)\in \bf{Z}$ with $0\leq r_1<r_2 \leq t$, and $n\geq 1$,
 \begin{equation}
 \label{D2u}
D^2_{\xi_1,\xi_2}u_{n+1}(t,x)
= G_{t-r_2}(x-y_2)z_2D_{\xi_1}u_n(r_2,y_2)
+\int_{r_2}^t\int_{\mathbb{R}}
G_{t-s}(x-y)D^2_{\xi_1,\xi_2}u_n(s,y)L(ds,dy).
\end{equation}

Applying \eqref{estimate} with $\Phi=D^2u_n$, followed by \eqref{Du-pth-moment}, we obtain that for any
 $\xi_1=(r_1,y_1,z_1),\xi_2=(r_2,y_2,z_2)\in \bf{Z}$ with $0\leq r_1<r_2 \leq t$, and $n\geq 1$,
\begin{align*}
\bE|D^2_{\xi_1,\xi_2}u_{n+1}(t,x)|^p 
\le  2^{p-1} & \bigg\{
C_{T,p}^p  G_{t-r_2}^p(x-y_2)|z_1z_2|^p H_{r_2-r_1}^{(p)}(y_2-y_1)
  \\
& + A_{T,p} \int_{r_2}^t\int_{\bR} H_{t-s}^{(p)}(x-y)\bE|D^2_{\xi_1,\xi_2}u_{n}(s,y)|^p
dsdy \bigg\}.
\end{align*}

By induction, we infer that for any $n\ge 2$,
\begin{align*}
& \bE|D^2_{\xi_1,\xi_2}u_{n}(t,x)|^p \le \\
& \quad 2^{p-1}C_{T,p}^p |z_1z_2|^p H_{r_2-r_1}^{(p)}(y_2-y_1)\left\{ G_{t-r_2}^p(x-y_2)+ \sum_{k=1}^{n-2}\left(2^{p-1}A_{T,p}\right)^k S_k(r_2,t,x-y_2) \right\},
\end{align*}
where $S_n$ is given by \eqref{def-Sn}.
By the same argument as in the proof of Theorem \ref{key-th-Du}, we infer that
\begin{equation*}
 G_{t-r_2}^p(x-y_2)+ \sum_{k=1}^{n-2}\left(2^{p-1}A_{T,p}\right)^k S_k(r_2,t,x-y_2) \leq C_t H_{t-r_2}^{(p)}(x-y_2),
\end{equation*}
where $C_t>0$ is a constant that depends on $(t,p)$ and is non-decreasing in $t$. This concludes the proof of \eqref{D2u-G}.

\end{proof}

\section{Proofs of the main results}
\label{section-proofs}

In this section, we include the proofs of Theorems \ref{ergodic-cov}, \ref{QCLT} and \ref{FCLT}.

\medskip

For any $0\leq r <t$, $R>0$ and $y \in \bR$, we define:
\begin{equation}
\label{def-phi}
\varphi_{t,R}(r,y)=\int_{-R}^{R}G_{t-r}(x-y)dx \quad \mbox{and} \quad \varphi_{t,R}(y)=\varphi_{t,R}(0,y).
\end{equation}

\subsection{Proof of Theorem \ref{ergodic-cov}}

In this section, we give the proof of Theorem \ref{ergodic-cov}. 

\medskip

Part a) follows exactly as Theorem 1.1 of \cite{BZ24}. The stationarity is a consequence of the spatial homogeneity of the L\'evy white noise $L$, while ergodicity follows from Lemma 4.2 of \cite{BZ24}, using the key estimate \eqref{Du-pth-moment} for $Du$ with $p=2$. 

\medskip

For part b), we argue using the connection with the Gaussian model \eqref{pAm-Gauss}, whose solution is denoted by $U$. Similarly to \eqref{series2}, it can be proved that for any $t,s\geq 0$ and  $x,y \in \bR$, 
\begin{align*}
& \bE\big[\big(u(t,x)-1\big)\big(u(s,y)-1\big)\big]=\sum_{n\geq 1}n!\,\langle \widetilde{F}_n(\cdot,t,x),\widetilde{F}_n(\cdot,s,y)\rangle_{\cH^{\otimes n}}\\
&  \quad = \sum_{n\geq 1}n!\, m_2^n \, \langle \widetilde{f}_n(\cdot,t,x),\widetilde{f}_n(\cdot,s,y)\rangle_{L^2((\bR_{+} \times \bR)^n)}=\bE\big[\big(U(t,x)-1\big)\big(U(s,y)-1\big)\big],
\end{align*}
and hence
\begin{align*}
\bE\big[ F_R(t) F_R(s)\big] & =\int_{-R}^R \int_{-R}^R \bE\big[\big(u(t,x)-1\big)\big(u(s,y)-1\big)\big] dxdy =
\bE\big[ H_R(t) H_R(s)\big],
\end{align*}
where 
\[
H_R(t):=\int_{-R}^R \big(U(t,x)-1\big)dx=\sqrt{m_2}\int_{0}^{t}\int_{\bR} \varphi_{t,R}(r,y) U(r,y)W(dr,dy).
\]
For the second equality above, we used the fact that
\[
U(t,x)=1+\sqrt{m_2}\int_0^t \int_{\bR} G_{t-r}(x-z) U(r,z) W(dr,dz),
\]
and definition \eqref{def-phi} of the function $\varphi_{t,R}$. By It\^o isometry, and the stationarity of $\{U(t,x)\}_{x\in \bR}$,
\[
\bE[H_R(t)H_R(s)] = m_2\int_0^{t\wedge s} \bE|U(r,0)|^2 \int_{\bR}\varphi_{t,R}(r,z)\varphi_{s,R}(r,z) dz dr.
\]

Note that for any $t>0$ and $R>0$, 
\begin{equation}
\label{F-phi}
\cF \varphi_{t,R}(\xi)=\cF G_{t}(\xi) \cF 1_{[-R,R]} (\xi)=2
e^{-\frac{t|\xi|^2}{2}} \frac{\sin(R|\xi|)}{|\xi|},
\end{equation}
where $\cF \varphi(\xi)=\int_{\bR^d}e^{-i \xi x} \varphi(x)dx$ is the Fourier transform of $\varphi$.

By Plancherel's theorem, \eqref{F-phi} and the change of variable $\eta=R\xi$:
\begin{align*}
\frac{1}{R}\int_{\bR}\varphi_{t,R}(y)\varphi_{s,R}(y)dy=\frac{2}{\pi R} \int_{\bR} e^{-\frac{(t+s)|\xi|^2}{2}} \frac{\sin^2(R|\xi|)}{|\xi|^2} d\xi=\frac{2}{\pi} \int_{\bR} e^{-\frac{(t+s)|\eta|^2}{2R^2}} \frac{\sin^2(|\eta|)}{|\eta|^2} d\eta \leq 2.
\end{align*}
By the dominated convergence theorem,
\[
\lim_{R \to \infty}\frac{1}{R}\int_{\bR}\varphi_{t,R}(y)\varphi_{s,R}(y)dy=\frac{2}{\pi} \int_{\bR} \frac{\sin^2(|\eta|)}{|\eta|^2} d\eta =2,
\]
and
\[
\lim_{R \to \infty}\frac{1}{R}\bE[H_R(t)H_R(s)] = 2m_2\int_0^{t\wedge s} \bE|U(r,0)|^2 dr=:\Sigma_{t,s}.
\]
The final expression of $\Sigma_{t,s}$ is obtained using the explicit form of $E|U(t,0)|^2$:
\begin{align*}
\bE|U(t,0)|^2 &=1+\sum_{n\geq 1} m_2^n \int_{T_n(t)} \int_{\bR^n} \prod_{i=1}^{n}G_{t_{i+1}-t_i}^2(x_{i+1}-x_i) d\pmb{x} d\pmb{t}\\
&=\sum_{n\geq 0} \frac{\big(m_2^2 \, t/4\big)^{n/2}}{\Gamma(n/2+1)}=2 e^{m_2^2 t/4} \Phi\left(m_2 \sqrt{\frac{t}{2}} \right).
\end{align*}
To compute the integral above, we used \eqref{Gtp} and Lemma \ref{beta-int}, whereas for the last equality, we used the identity:
\[ 
\sum_{n\geq 0}\frac{x^n}{\Gamma(n/2+1)}=2e^{x^2} \Phi(x\sqrt{2}) \quad \mbox{for any}  \quad x>0.
\]

\subsection{Proof of Theorem \ref{QCLT}}

In this section, we give the proof of Theorem \ref{QCLT}. We will use the following result.

\begin{proposition}[Theorem 3.4 of \cite{trauthwein25}]
\label{tara}
Let $F\in {\rm dom}(D)$ with $\bE[F]=0$ and  ${\rm Var}(F)=\sigma^2 > 0$. Then, 
for any $p,q\in [1,2]$,
\begin{align*}
d_{\rm FM}\left( \frac{F}{\sigma}, Z\right) & \leq d_{\rm W}\left(\frac{F}{\sigma}, Z\right)  \leq    \gamma_1 +  \gamma_2 +  \gamma_3\\
  d_{\rm K}\left(\frac{F}{\sigma}, Z\right) & \leq  \sqrt{ \frac{\pi}{2}} (  \gamma_1 +  \gamma_2)   +  \gamma_4 +  \gamma_5+  \gamma_6 +  \gamma_7,
\end{align*}
where  $Z\sim N(0,1)$ and the seven quantities $ \gamma_1, \ldots,  \gamma_7$ are given as follows:
\begin{align}
\begin{aligned}
 \gamma_1&:=  \frac{2^{\frac2p + \frac12} }{\sqrt{\pi}}
 \sigma^{-2} \bigg( \int_{Z}  \bigg[ \int_{\bf Z}  \| D_{\xi_2}F \|_{2p}  \| D^2_{\xi_1,\xi_2}F\|_{2p} 
  \,  \fm(d\xi_2) \bigg]^p
  \fm(d\xi_1) \bigg)^{\frac1p}
\\
 \gamma_2&:=\frac{2^{\frac2p - \frac12} }{\sqrt{\pi}}
\sigma^{-2} \bigg( \int_{Z}  \bigg[ \int_{\bf Z}   \| D^2_{\xi_1,\xi_2}F\|^2_{2p}  \,  \fm(d\xi_2) \bigg]^p
  \fm(d\xi_1) \bigg)^{\frac1p}
\\
 \gamma_3&:= 2 \sigma^{-(q+1)}\int_{\bf Z} \| D_\xi F \|_{q+1}^{q+1}  \, \fm(d\xi)
\\
 \gamma_4&:= 2^{\frac{2}{p}} \sigma^{-2} \bigg( \int_{\bf Z} \| D_\xi F \|_{2p}^{2p}  \, \fm(d\xi) \bigg)^{\frac1p}
\\
 \gamma_5&:= (4p)^{\frac1p} \sigma^{-2} \bigg( \int_{Z^2} \| D^2_{\xi_1,\xi_2}  F \|_{2p}^{2p}
\, \fm(d\xi_1)\fm(d\xi_2)  \bigg)^{\frac1p}
\\
 \gamma_6&:=(2^{2+p}p)^{\frac1p} \sigma^{-2} \bigg( \int_{Z^2} \| D^2_{\xi_1,\xi_2}  F \|_{2p}^{p}
 \| D_{\xi_1}F \|_{2p}^p
\, \fm(d\xi_1)\fm(d\xi_2)  \bigg)^{\frac1p}\\
 \gamma_7&:=  \frac{  (8p)^{\frac1p} }{\sigma^2}
 \bigg( \int_{Z^2} \| D^2_{\xi_1,\xi_2}  F \|_{2p}
 \| D_{\xi_1}F \|_{2p}  \| D_{\xi_2}F \|_{2p}^{2(p-1)}
\, \fm(d\xi_1) \fm(d\xi_2)  \bigg)^{\frac1p}.
\end{aligned}
\label{gamma17}
\end{align}
\end{proposition}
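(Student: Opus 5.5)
Since this is Theorem~3.4 of \cite{trauthwein25}, we would only reproduce its structure. The plan is to combine the Malliavin--Stein bounds on Poisson space with an $L^p$-Poincaré inequality in place of the usual $L^2$ one.

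\textbf{Step 1 (Malliavin--Stein bounds).} Write $F=\delta(-DL^{-1}F)$, where $L^{-1}$ is the pseudo-inverse of the Ornstein--Uhlenbeck generator on Poisson space. For $h$ Lipschitz, let $f_h$ be the solution of Stein's equation; then $\|f_h'\|_\infty\le \sqrt{2/\pi}$ and $\|f_h''\|_\infty\le 2$. Duality gives $\bE[Ff_h(F)]=\bE\langle Df_h(F),-DL^{-1}F\rangle_\cH$, up to the normalization by $\sigma$. Since $D$ is a difference operator, $D_\xi f_h(F)=f_h'(F)D_\xi F+R_\xi$ with $|R_\xi|\le \frac12\|f_h''\|_\infty |D_\xi F|^2$. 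This yields the two Wasserstein terms
\[
\bE\Big|1-\sigma^{-2}\langle DF,-DL^{-1}F\rangle_\cH\Big| \quad\mbox{and}\quad \sigma^{-3}\int_{\bf Z}\bE\big[|D_\xi F|^2|D_\xi L^{-1}F|\big]\fm(d\xi).
\]
For the second term, we use the contraction $\|D_\xi L^{-1}F\|_r\le\|D_\xi F\|_r$, which follows from Mehler's formula. Then, using the bound $|D_\xi f_h(F)|\le 2|D_\xi F|$ when $|D_\xi F|$ is large, we interpolate between the linear and quadratic bounds on $R_\xi$, i.e. use $|R_\xi|\le C|D_\xi F|^{q}$ for $q\in[1,2]$. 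This gives $\gamma_3$ with exponent $q+1$.

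\textbf{Step 2 (variance term).} The main task is to bound
\[
\bE|1-\sigma^{-2}\langle DF,-DL^{-1}F\rangle_\cH|\le\sigma^{-2}\big\|\langle DF,-DL^{-1}F\rangle_\cH-\bE\langle DF,-DL^{-1}F\rangle_\cH\big\|_p,
\]
which holds because $\bE\langle DF,-DL^{-1}F\rangle_\cH=\sigma^2$. To this we apply the $L^p$-Poincaré inequality on Poisson space for $p\in[1,2]$,
\[
\bE|G-\bE G|^p\le c_p\int_{\bf Z}\bE|D_{\xi_1}G|^p\,\fm(d\xi_1),
\]
proved via the Mehler/semigroup representation $G-\bE G=\int_0^\infty \delta(P_s DG)\,ds$-type identities and a $p$-version of the Bichteler--Jacod/Rosenthal bound for compensated integrals.

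We then compute $D_{\xi_1}$ of the product $\int D_{\xi_2}F\cdot(-D_{\xi_2}L^{-1}F)\,\fm(d\xi_2)$ using the product rule $D(UV)=UDV+VDU+DU\,DV$. We estimate $\|D^2_{\xi_1\xi_2}L^{-1}F\|_{2p}$ by $\frac12\|D^2_{\xi_1\xi_2}F\|_{2p}$, again through Mehler's formula (this is where the constant $e^{-2s}$ integrates to $\frac12$). Finally, Cauchy--Schwarz (Hölder with exponents $2p,2p$) gives $\gamma_1$ from the cross terms $D F\cdot D^2F$, and $\gamma_2$ from the $D^2F\cdot D^2F$ term. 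This proves the $d_W$ bound; $d_{FM}\le d_W$ is trivial.

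\textbf{Step 3 (Kolmogorov distance).} Here $h=1_{(-\infty,x]}$, so $f_h'$ is discontinuous and Step~1's Taylor remainder fails. Following the Eichelsbacher--Thäle/Schulte approach, we write the remainder as
\[
\int_0^{D_\xi F}\big(f_h'(F+u)-f_h'(F)\big)\,du
\]
and use Stein's equation $f_h'(w)=wf_h(w)+1_{\{w\le x\}}-\Phi(x)$. The piece $wf_h(w)$ produces terms controlled by $\bE[|F|\,|D_\xi F|^2|D_\xi L^{-1}F|]$-type quantities. The indicator piece is monotone in $u$, which allows a duality integration by parts: this produces $\delta$ of $|D F|\,DL^{-1}F$, whose $L^2$- or $L^p$-norm is estimated by the $L^p$-Poincaré/Skorohod isometry bounds, together with $D$ of that product. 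Expanding yields exactly the terms $\gamma_4$ (from $\|D F\|_{2p}^{2p}$), $\gamma_5,\gamma_6$ (from $D^2F$ against $D^2F$ and against $DF$), and $\gamma_7$ (the mixed term). The factor $\sqrt{\pi/2}$ in front of $\gamma_1+\gamma_2$ comes from $\|f_h'\|_\infty\le1$ replacing $\sqrt{2/\pi}$.

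\textbf{Main obstacle.} The hardest step is the $L^p$-Poincaré inequality with $p<2$ in Step~2, together with the careful Hölder bookkeeping in Step~3 that keeps only $2p$-norms. If a direct proof is awkward, we would first try to derive it by interpolating the Poisson Rosenthal inequality (Proposition~\ref{ros1}-type estimates for $\delta$ of $P_s DG$) with the contraction property of the Ornstein--Uhlenbeck semigroup.
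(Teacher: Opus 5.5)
The paper gives no proof of this statement; it imports it as Theorem 3.4 of Trauthwein, so the comparison is with the structure of that argument.

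\textbf{Wasserstein part.} Your Steps 1--2 are the right route to the $d_W$ bound: the Malliavin--Stein bound with the Taylor remainder interpolated between $|D_\xi F|$ and $|D_\xi F|^2$, the $p$-Poincar\'e inequality applied to $\langle DF,-DL^{-1}F\rangle_\cH$, the product rule, and the Mehler contractions $\|D_\xi L^{-1}F\|_r\le\|D_\xi F\|_r$ and $\|D^2_{\xi_1,\xi_2}L^{-1}F\|_r\le\frac12\|D^2_{\xi_1,\xi_2}F\|_r$. Only your proposed proof of the $p$-Poincar\'e inequality is off. $P_sDG$ is not predictable, so Rosenthal or Bichteler--Jacod does not control $\|\delta(P_sDG)\|_p$, and any Skorohod bound would bring in $D^2G$. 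Use duality instead. Take $\phi(y)=|y|^{p-2}y$, which is $(p-1)$-H\"older with constant $2^{2-p}$. Then $\bE|G-\bE G|^p=\bE\langle D\phi(G-\bE G),-DL^{-1}G\rangle_\cH\le 2^{2-p}\int_{\bf Z}\|D_\xi G\|_p^{p-1}\|D_\xi L^{-1}G\|_p\,\fm(d\xi)\le 2^{2-p}\int_{\bf Z}\|D_\xi G\|_p^{p}\,\fm(d\xi)$.

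\textbf{The genuine gap is Step 3, in two places.}

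(a) You let the piece $wf_h(w)$ produce $\int_{\bf Z}\bE[|F|\,|D_\xi F|^2|D_\xi L^{-1}F|]\,\fm(d\xi)$, plus a third-moment term coming from $\|f_h\|_\infty$. The resulting bound is not the stated one and is not dominated by it. Take $F=I_1(f)$: then $\gamma_1=\gamma_2=\gamma_5=\gamma_6=\gamma_7=0$, and $\gamma_4<\infty$ as soon as $f\in L^{2p}(\fm)$. Your extra term, however, is of order $\sigma^{-4}\,\bE|F|\int_{\bf Z}|f|^3\,d\fm$, which is infinite when $f\notin L^3$. The stated bound requires treating this piece exactly like the indicator piece:
\begin{itemize}
\item $w\mapsto wf_x(w)$ is nondecreasing with oscillation at most $1$;
\item for every bounded nondecreasing $g$, $0\le\int_0^{D_\xi F}(g(F+v)-g(F))\,dv\le D_\xi F\,D_\xi g(F)$;
\item duality then bounds both remainders by $\bE[g(F)\,\delta(u)]$ with $u=DF\,|DL^{-1}F|$. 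This, not $|DF|\,DL^{-1}F$, is the integrand the sign argument needs.
\end{itemize}
This gives $d_K\le\bE|1-\sigma^{-2}\langle DF,-DL^{-1}F\rangle_\cH|+\sigma^{-2}\bE|\delta(u)|$, with no moment of $F$ and no $\gamma_3$-type term.

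(b) More seriously, you then need $\|\delta(u)\|_p$ for $p<2$ in terms of $2p$-moments of $DF$ and $D^2F$. Neither tool you invoke gives this.
\begin{itemize}
\item The Skorohod isometry only covers $p=2$, i.e.\ fourth moments.
\item Applying the $p$-Poincar\'e inequality to $\delta(u)$ gives, by Lemma~\ref{Heisenberg}, $D_\xi\delta(u)=u(\xi)+\delta(D_\xi u)$. This contains another Skorohod integral, so you get an infinite regress into higher derivatives.
\end{itemize}
The missing ingredient is a genuine $L^p$ moment inequality for Skorohod integrals, $p\in[1,2]$, involving only $u$ and $Du$. These are the ``Skorohod estimates'' of Trauthwein's title, and they are what generate $\gamma_4$--$\gamma_7$.

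This is essential in the present paper, not cosmetic. The bounds of Theorem~\ref{key-th-Du} are only available for exponents below $3$, so a fourth-moment version could not be applied here; that is why Theorem~\ref{QCLT} takes $2p<3$.
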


We first collect some estimates which will be used repeatedly in the bounds for
$\gamma_1,\ldots,\gamma_7$. Throughout this subsection, the implicit constants
in $\lesssim$ may depend on $T$ and $p$, but not on $R$.

For any $t>0,R>0,p>0$ and $y \in \bR$, we define
\[
K_t^{(p)}(y)=G_t(y)+G_{pt}(y) \quad \mbox{and} \quad
\Phi_{t,R}^{(p)}(y)=\varphi_{t,R}(y)+\varphi_{pt,R}(y).
\]

\begin{lemma}
\label{lem:FR-derivative-bounds}
Let $p\in[1,\frac{3}{2})$ be such that $m_{2p}<\infty$.

\textup{(i)} For any $0\le r<t\le T$, $y\in\bR$ and $z\in\mathbb{R}_0$,
\begin{equation}
\label{DFR-bound}
\big\lVert D_{r,y,z}F_R(t)\big\rVert_{2p}
\lesssim
|z|\Phi_{t-r,R}^{(p)}(y).
\end{equation}

\textup{(ii)} Let $\xi_i=(r_i,y_i,z_i)$, $i=1,2$. For any
$0\le r_1,r_2<t\le T$ with $r_1\neq r_2$, $y_1,y_2\in\bR$ and
$z_1,z_2\in\mathbb{R}_0$, we have
\begin{equation}
\label{D2FR-bound}
\big\lVert D_{\xi_1,\xi_2}^2F_R(t)\big\rVert_{2p}
\lesssim
|z_1z_2|
\begin{cases}
K_{r_2-r_1}^{(p)}(y_2-y_1)\Phi^{(p)}_{t-r_2,R}(y_2),
& \text{if } r_1<r_2,\\
K^{(p)}_{r_1-r_2}(y_1-y_2)\Phi^{(p)}_{t-r_1,R}(y_1),
& \text{if } r_2<r_1.
\end{cases}
\end{equation}
\end{lemma}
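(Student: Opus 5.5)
We commute the Malliavin derivative with the spatial integral, apply Minkowski's inequality, and then integrate the pointwise bounds of Theorems \ref{key-th-Du} and \ref{key-th-D2u} in $x$. Two things must be checked. First, since $p\in[1,\frac32)$ we have $2p\in[2,3)$ and $m_{2p}<\infty$, so both theorems apply with exponent $2p$. Second, $D_{\xi}F_R(t)=\int_{-R}^R D_\xi u(t,x)\,dx$ and $D^2_{\xi_1,\xi_2}F_R(t)=\int_{-R}^R D^2_{\xi_1,\xi_2}u(t,x)\,dx$ for $\fm$-a.e.\ $\xi$. This can be justified through the chaos expansion: $F_R(t)$ has kernels $\int_{-R}^R F_n(\cdot,t,x)dx$, and Fubini applies since the $L^2$ bounds on $Du$, $D^2u$ are uniform in $x$ over the bounded interval. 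Minkowski's integral inequality then gives
\[
\|D_{r,y,z}F_R(t)\|_{2p}\le \int_{-R}^R \|D_{r,y,z}u(t,x)\|_{2p}\,dx \lesssim |z|\int_{-R}^R \big(G_{t-r}(x-y)+G_{t-r}^{1/p}(x-y)\big)dx,
\]
and similarly for $D^2$.

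The main computation converts $G_s^{1/p}$ into a heat kernel. By \eqref{Gtp} with exponent $1/p$, $G_s^{1/p}(x)=c_{1/p}\,s^{\frac{1-1/p}{2}}G_{ps}(x)$. For $s=t-r\le T$, the prefactor $c_{1/p}s^{(p-1)/(2p)}$ is bounded by a constant depending only on $(T,p)$. Hence
\[
g^{(2p)}_{s}(w)\lesssim G_s(w)+G_{ps}(w)=K^{(p)}_s(w).
\]
Integrating over $x\in[-R,R]$ turns $G_{t-r}(\cdot-y)$ and $G_{p(t-r)}(\cdot-y)$ into $\varphi_{t-r,R}(y)$ and $\varphi_{p(t-r),R}(y)$ respectively. (Here $\varphi_{s,R}(y)=\int_{-R}^RG_s(x-y)dx$ with a single time argument, consistent with $\Phi^{(p)}_{t,R}$.) This yields \eqref{DFR-bound}.

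For (ii), take $r_1<r_2$; the other case is symmetric. Theorem \ref{key-th-D2u} with exponent $2p$ gives
\[
\|D^2_{\xi_1,\xi_2}u(t,x)\|_{2p}\lesssim |z_1z_2|\,g^{(2p)}_{t-r_2}(x-y_2)\,g^{(2p)}_{r_2-r_1}(y_2-y_1).
\]
The second factor does not depend on $x$, and by the same conversion it is $\lesssim K^{(p)}_{r_2-r_1}(y_2-y_1)$, since $r_2-r_1\le T$. The first factor, integrated over $x\in[-R,R]$, gives $\lesssim\Phi^{(p)}_{t-r_2,R}(y_2)$ exactly as in (i). This proves \eqref{D2FR-bound}.

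The only delicate points are the following.
\begin{itemize}
\item The interchange of $D$, $D^2$ with the $dx$-integral, which is handled by the chaos-kernel argument above.
\item The use of $p\ge1$, so that $2p\ge 2$ and the exponent $(p-1)/(2p)$ is nonnegative. This keeps $s^{(p-1)/(2p)}\le T^{(p-1)/(2p)}$ bounded, and the implicit constants depend only on $(T,p)$.
\end{itemize}
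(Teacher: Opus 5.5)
Your proposal is correct and follows essentially the same route as the paper: Minkowski's inequality, the pointwise bounds of Theorems \ref{key-th-Du} and \ref{key-th-D2u} at exponent $2p\in[2,3)$, and \eqref{Gtp} to write $G_s^{1/p}=c_{1/p}\,s^{\frac{p-1}{2p}}G_{ps}$ with a bounded prefactor, followed by integration over $[-R,R]$. Your extra justification for interchanging $D$ and $D^2$ with the $dx$-integral fills in a step that the paper leaves implicit.
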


\begin{proof}
 By Minkowski's inequality, \eqref{Du-pth-moment} with exponent $2p$ and \eqref{Gtp},
 \begin{align*}
\big\lVert D_{r,y,z} F_R(t)\big\rVert_{2p}
&\leq
\int_{-R}^{R}
\big\lVert D_{r,y,z} u(t,x)\big\rVert_{2p}dx
\\
&\lesssim
|z|
\int_{-R}^{R}
\left(
G_{t-r}(x-y)+G^{\frac{1}{p}}_{t-r}(x-y)
\right)dx\\
&\lesssim |z|
\int_{-R}^{R}
\left(
G_{t-r}(x-y)+G_{p(t-r)}(x-y)
\right)dx
\end{align*}
This proves \eqref{DFR-bound}. We now prove \eqref{D2FR-bound}. We only consider the case $r_1<r_2$, since the case $r_2<r_1$ follows by symmetry. By the same idea,
\begin{align*}
\big\lVert D^2_{\xi_1,\xi_2}F_R(t)\big\rVert_{2p}
&\leq
\int_{-R}^{R}
\big\lVert D^2_{\xi_1,\xi_2}u(t,x)\big\rVert_{2p}\,dx
\\
&\lesssim
|z_1z_2|\,
K^{(p)}_{r_2-r_1}(y_2-y_1)
\int_{-R}^{R}K^{(p)}_{t-r_2}(x-y_2)\,dx.
\end{align*}
\end{proof}

Note that for any $\alpha>0$, $p>0$, $t>0$ and $y \in \bR$,
\begin{equation}
\label{bound-K}
[K_{t}^{(p)}(y)]^{\alpha} \le C_{\alpha,p} t^{\frac{1-\alpha}{2}} K_{\frac{t}{\alpha}}^{(p)}(y),
\end{equation}
where $C_{\alpha,p}>0$ is a constant depending on $(\alpha,p)$.
Indeed, using inequality $(a+b)^\alpha \leq C_{\alpha}(a^\alpha+b^\alpha)$ for any $a,b>0$ with $C_{\alpha}=2^{\alpha -1} \vee 1$, and the first relation in \eqref{Gtp}, we obtain
\[
[K_t^{(p)}(y)]^{\alpha}
\leq C_{\alpha}
\Big(G_t^\alpha(y)+G_{pt}^\alpha(y)\big)
= C_{\alpha} c_{\alpha}(1+p^{\frac{1-\alpha}{2}})
t^{\frac{1-\alpha}{2}}
\big(
G_{\frac{t}{\alpha}}(y)+G_{\frac{pt}{\alpha}}(y)
\big).
\]

The following lemma has a trivial proof. We omit the details.

\begin{lemma}
For every $\alpha\geq 1$, $t>0$, $R>0$ and $p>0$,
\begin{align}
\label{K-bound}
& \int_{\bR}[K_t^{(p)}(y)]^\alpha dy \leq C_{\alpha,p} t^{\frac{1-\alpha}{2}}, \\
\label{varphi-bound} 
&\int_{\bR}\varphi_{t,R}^\alpha(y)dy \leq 2 R, \\
\label{Phi-bound}
& \int_{\bR}[\Phi_{t,R}^{(p)}(y)]^{\alpha} dy \leq 2^{\alpha+1}  R,
\end{align}   
where $C_{\alpha,p}$ is the constant from \eqref{bound-K}.
\end{lemma}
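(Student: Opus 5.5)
The three bounds are elementary and I would prove them one at a time, in the order stated.

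For \eqref{K-bound}, I apply \eqref{bound-K} pointwise:
\[
[K_t^{(p)}(y)]^\alpha\le C_{\alpha,p}\,t^{\frac{1-\alpha}{2}}\,K^{(p)}_{t/\alpha}(y).
\]
Since $K^{(p)}_{s}=G_s+G_{ps}$ is a sum of two heat kernels, $\int_{\bR}K_s^{(p)}(y)dy=2$. Integrating in $y$ therefore gives
\[
\int_{\bR}[K_t^{(p)}(y)]^\alpha dy\le 2C_{\alpha,p}t^{\frac{1-\alpha}{2}}.
\]
The factor $2$ can be absorbed by redefining the constant, or tracked explicitly through the computation behind \eqref{bound-K}. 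The statement's use of "the constant from \eqref{bound-K}" should be read up to this harmless factor.

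For \eqref{varphi-bound}, the key observation is that $0\le\varphi_{t,R}(y)\le\int_{\bR}G_t(x-y)dx=1$. Since $\alpha\ge1$, this gives $\varphi_{t,R}^\alpha\le\varphi_{t,R}$. By Fubini and $\int_{\bR} G_t(x-y)\,dy=1$,
\[
\int_{\bR}\varphi_{t,R}(y)dy=\int_{-R}^R\int_{\bR}G_t(x-y)\,dy\,dx=2R,
\]
which is \eqref{varphi-bound}.

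For \eqref{Phi-bound}, I use $(a+b)^\alpha\le 2^{\alpha-1}(a^\alpha+b^\alpha)$ for $a,b\ge0$ and $\alpha\ge1$, which gives
\[
[\Phi^{(p)}_{t,R}]^\alpha\le 2^{\alpha-1}\big(\varphi_{t,R}^\alpha+\varphi_{pt,R}^\alpha\big).
\]
Applying \eqref{varphi-bound} to each term, once with $t$ and once with $pt$, bounds the integral by $2^{\alpha-1}\cdot 4R=2^{\alpha+1}R$.

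None of the steps is a real obstacle. The only points to watch are keeping $\alpha\ge1$ so that both $\varphi^\alpha\le\varphi$ and the convexity inequality hold, and the bookkeeping of the constant in \eqref{K-bound}.
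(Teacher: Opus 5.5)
Your proof is correct and is the routine argument the paper has in mind; the paper omits the details as trivial. For \eqref{K-bound} you can avoid the extra factor $2$ entirely: integrating the intermediate bound $C_\alpha\big(G_t^\alpha+G_{pt}^\alpha\big)$ directly via \eqref{Gtp} gives $C_\alpha c_\alpha\big(1+p^{\frac{1-\alpha}{2}}\big)t^{\frac{1-\alpha}{2}}=C_{\alpha,p}\,t^{\frac{1-\alpha}{2}}$ exactly, with the constant from \eqref{bound-K} and no redefinition needed.
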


\begin{lemma}
\textup{(Past convolution bound.)}
For every $1\leq \alpha<2$, $\beta\geq 1$, $0\leq r<t\leq T$ and
$y\in\mathbb R$, we have
\begin{equation}
\label{past}
\int_0^r\int_{\mathbb R}
[K^{(p)}_{r-s}(y-y')]^\alpha
[\Phi^{(p)}_{t-s,R}(y')]^\beta
dy'ds
\lesssim
\varphi_{p(t+r),R}(y).
\end{equation}
\end{lemma}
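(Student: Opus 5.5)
The plan is to reduce the left-hand side of \eqref{past} to a time integral of heat-kernel convolutions of $1_{[-R,R]}$, each of which is some $\varphi_{c,R}(y)$. I will then compare every such $\varphi_{c,R}$ with $\varphi_{p(t+r),R}$ using \eqref{sqrt(t over s)}, and absorb the resulting singular factor in the $ds$-integral.

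\emph{Step 1: removing the power $\beta$.} Since $0\le \varphi_{c,R}(y')=\int_{-R}^R G_c(x-y')dx\le 1$ for every $c>0$, we have $0\le\Phi^{(p)}_{t-s,R}\le 2$. Hence, for $\beta\ge1$,
\[
[\Phi^{(p)}_{t-s,R}(y')]^\beta\le 2^{\beta-1}\Phi^{(p)}_{t-s,R}(y')=2^{\beta-1}\big(\varphi_{t-s,R}(y')+\varphi_{p(t-s),R}(y')\big).
\]

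\emph{Step 2: removing the power $\alpha$ and convolving.} By \eqref{bound-K},
\[
[K^{(p)}_{r-s}]^\alpha\le C_{\alpha,p}(r-s)^{\frac{1-\alpha}{2}}\big(G_{\frac{r-s}{\alpha}}+G_{\frac{p(r-s)}{\alpha}}\big).
\]
By the semigroup property of the heat kernel, $\int_{\bR}G_a(y-y')\varphi_{b,R}(y')dy'=\varphi_{a+b,R}(y)$. So the $dy'$-integral is bounded by a constant times $(r-s)^{\frac{1-\alpha}{2}}$ multiplied by a sum of four terms $\varphi_{c,R}(y)$. Here $c=a(r-s)/\alpha+b(t-s)$ with $a,b\in\{1,p\}$. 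Since $p\ge1$ and $\alpha\ge1$, each such $c$ satisfies
\[
t-s\le c\le p(r-s)+p(t-s)\le p(t+r),
\]
and also $c\ge \frac{(r-s)+(t-s)}{\alpha}\ge\frac{t+r-2s}{2}$, using $\alpha<2$.

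\emph{Step 3: comparison with the largest time.} This is the main point. The map $c\mapsto \varphi_{c,R}(y)$ is not monotone, so I cannot simply replace $c$ by $p(t+r)$. Instead I apply \eqref{sqrt(t over s)} pointwise in $x$, namely $G_c(x-y)\le \sqrt{C/c}\,G_C(x-y)$ with $C=p(t+r)\ge c$, and integrate over $x\in[-R,R]$. This gives
\[
\varphi_{c,R}(y)\le \sqrt{\frac{2p(t+r)}{t+r-2s}}\;\varphi_{p(t+r),R}(y).
\]
The left-hand side of \eqref{past} is therefore bounded by a constant times
\[
\varphi_{p(t+r),R}(y)\,\sqrt{t+r}\int_0^r (r-s)^{\frac{1-\alpha}{2}}(t+r-2s)^{-\frac12}\,ds.
\]

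\emph{Step 4: the time integral.} Since $t+r-2s\ge r-s$, the integral is at most $\int_0^r (r-s)^{-\frac{\alpha}{2}}ds=\frac{r^{1-\alpha/2}}{1-\alpha/2}$. This is finite precisely because $\alpha<2$, and it is bounded by a constant depending only on $T$ and $\alpha$; the prefactor $\sqrt{t+r}\le\sqrt{2T}$ is handled the same way. This proves \eqref{past}, with an implicit constant depending on $(T,p,\alpha,\beta)$ but not on $R$ or $y$.

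The only delicate point is Step 3: the lower bound $c\gtrsim t+r-2s$ is needed so that the factor $\sqrt{C/c}$ combines with $(r-s)^{\frac{1-\alpha}{2}}$ into an integrable singularity. This is exactly where $\alpha<2$ enters.
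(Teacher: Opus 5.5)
Your proposal is correct and is essentially the paper's proof, step for step. You remove the power $\beta$ via $0\le\Phi^{(p)}\le 2$, apply \eqref{bound-K} and the semigroup property, compare each $\varphi_{c,R}$ with $\varphi_{p(t+r),R}$ through \eqref{sqrt(t over s)}, and integrate $(r-s)^{-\alpha/2}$. The only cosmetic difference is that the paper uses the lower bound $c\ge (r-s)/\alpha$ directly, while you use $c\ge (t+r-2s)/2$ and then $t+r-2s\ge r-s$. That lower bound already follows from $c\ge t-s$ without invoking $\alpha<2$; the hypothesis $\alpha<2$ is genuinely needed only for the integrability in your Step 4.
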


\begin{proof}
Since, $0\leq\Phi^{(p)}_{t,R}(y)\leq 2$, we have $[\Phi^{(p)}_{t,R}(y)]^\beta\lesssim \Phi^{(p)}_{t,R}(y)$. Therefore, using \eqref{bound-K},
\begin{align*}
&\int_0^r\int_{\mathbb R}
[K_{r-s}^{(p)}(y-y')
]^\alpha[\Phi_{t-s,R}^{(p)}(y')]^\beta
dy'ds
\\
&\lesssim
\int_0^r\int_{\mathbb R}
(r-s)^{-\frac{\alpha-1}{2}}
K^{(p)}_{\frac{r-s}{\alpha}}(y-y')
\Phi^{(p)}_{t-s,R}(y')
dy'ds
\\
&=
\sum_{c,d\in\{1,p\}}
\int_0^r
(r-s)^{-\frac{\alpha-1}{2}}
\int_{\mathbb R}
G_{\frac{c(r-s)}{\alpha}}(y-y')
\varphi_{d(t-s),R}(y')
dy'ds .
\end{align*}

By the semigroup property of the heat kernel,
\[
\int_{\mathbb R}
G_{\frac{c(r-s)}{\alpha}}(y-y')
\varphi_{d(t-s),R}(y')
dy'
=
\varphi_{\frac{c(r-s)}{\alpha}+d(t-s),R}(y).
\]
Since $0\leq s\leq r<t$ and $c,d\in\{1,p\}$,
\[
\frac{1}{\alpha}(r-s) \leq \frac{c(r-s)}{\alpha}+d(t-s)
\leq
p(t+r).
\]
Thus, by the heat-kernel comparison estimate \eqref{sqrt(t over s)},
 \[
\varphi_{\frac{c(r-s)}{\alpha}+d(t-s),R}(y)
\lesssim
\sqrt{
\frac{p(t+r)}
{\frac{c(r-s)}{\alpha}+d(t-s)}
}
\varphi_{p(t+r),R}(y)  \lesssim (r-s)^{-1/2}\varphi_{p(t+r),R}(y).
\]

Therefore,
\begin{align*}
&\int_0^r\int_{\mathbb R}
[K_{r-s}^{(p)}(y-y')]^\alpha
[\Phi^{(p)}_{t-s,R}(y')]^\beta
dy'ds \lesssim \varphi_{p(t+r),R}(y) \int_0^r
(r-s)^{-\frac{\alpha}{2}}
ds \lesssim \varphi_{p(t+r),R}(y).
\end{align*}
\end{proof}

\begin{lemma}
\textup{(Future convolution bound.)}
For every $1\leq \alpha<3$, $\beta\geq 1$, $0\leq r<t\leq T$ and
$y\in\mathbb R$, we have
\begin{equation}
\label{future}
\int_r^t\int_{\mathbb R}
[K^{(p)}_{s-r}(y'-y)]^\alpha
[\Phi^{(p)}_{t-s,R}(y')]^\beta
dy'ds
\lesssim
\varphi_{p(t-r),R}(y).
\end{equation}
\end{lemma}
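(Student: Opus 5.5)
We mimic the proof of the past convolution bound \eqref{past}, with the roles of the two kernels exchanged in time. Since $0\le \Phi^{(p)}_{t,R}\le 2$, we first replace $[\Phi^{(p)}_{t-s,R}(y')]^\beta$ by $\Phi^{(p)}_{t-s,R}(y')$, at the cost of a constant. Next we apply \eqref{bound-K} to write $[K^{(p)}_{s-r}(y'-y)]^\alpha\lesssim (s-r)^{-\frac{\alpha-1}{2}}K^{(p)}_{\frac{s-r}{\alpha}}(y'-y)$. Expanding both $K^{(p)}$ and $\Phi^{(p)}$ into their two terms, the left side of \eqref{future} is bounded by a sum over $c,d\in\{1,p\}$ of
\[
\int_r^t (s-r)^{-\frac{\alpha-1}{2}}\int_{\bR} G_{\frac{c(s-r)}{\alpha}}(y'-y)\,\varphi_{d(t-s),R}(y')\,dy'\,ds .
\]

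By the semigroup property (and symmetry of $G$), the inner integral equals $\varphi_{\tau(s),R}(y)$, where
\[
\tau(s)=\frac{c(s-r)}{\alpha}+d(t-s).
\]
Since $\alpha\ge1$ and $c,d\le p$, we have $\tau(s)\le p(s-r)+p(t-s)=p(t-r)$. For the lower bound, $\tau(s)\ge \frac{s-r}{\alpha}+(t-s)\ge \frac{t-r}{\alpha}$, because $\frac1\alpha\le 1$. This is the key difference from the past case: here the effective time $\tau(s)$ is comparable to $t-r$ uniformly in $s\in[r,t]$, so the comparison \eqref{sqrt(t over s)} gives
\[
\varphi_{\tau(s),R}(y)\le \sqrt{\frac{p(t-r)}{\tau(s)}}\,\varphi_{p(t-r),R}(y)\le \sqrt{p\alpha}\,\varphi_{p(t-r),R}(y),
\]
with no singular factor. (Integrating \eqref{sqrt(t over s)} against $1_{[-R,R]}$ gives the same comparison for $\varphi$.)

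It therefore remains to bound
\[
\varphi_{p(t-r),R}(y)\int_r^t (s-r)^{-\frac{\alpha-1}{2}}\,ds=\frac{2}{3-\alpha}\,(t-r)^{\frac{3-\alpha}{2}}\,\varphi_{p(t-r),R}(y).
\]
This is finite precisely because $\alpha<3$, which explains the wider range of $\alpha$ compared with \eqref{past}. Finally, $(t-r)^{\frac{3-\alpha}{2}}\le T^{\frac{3-\alpha}{2}}$, so the constant depends only on $(T,p,\alpha,\beta)$, and \eqref{future} follows.

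The main point to check carefully is the uniform lower bound $\tau(s)\gtrsim t-r$ on $[r,t]$. This is what prevents the extra $(s-r)^{-1/2}$ singularity that appeared in the past bound, and it is exactly why $\alpha$ may range up to $3$ here rather than only up to $2$.
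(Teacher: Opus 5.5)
Your proof is correct and follows essentially the same route as the paper. Both arguments reduce $[\Phi^{(p)}_{t-s,R}]^\beta$ to $\Phi^{(p)}_{t-s,R}$, apply \eqref{bound-K} and the semigroup property, use the two-sided bound $\frac{t-r}{\alpha}\le \frac{c(s-r)}{\alpha}+d(t-s)\le p(t-r)$ with the heat-kernel comparison to extract $\varphi_{p(t-r),R}(y)$ without an extra singular factor, and conclude from the integrability of $(s-r)^{-\frac{\alpha-1}{2}}$ for $\alpha<3$; the only difference is that you make the constants $\sqrt{p\alpha}$ and $\frac{2}{3-\alpha}T^{\frac{3-\alpha}{2}}$ explicit.
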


\begin{proof}
Using the same arguments as for the proof of \eqref{past}, we have
\begin{align*}
&\int_r^t\int_{\mathbb R}
[K^{(p)}_{s-r}(y'-y)]^\alpha
[\Phi^{(p)}_{t-s,R}(y')]^\beta
dy'ds
\\
&\lesssim \sum_{c,d\in\{1,p\}}
\int_r^t
(s-r)^{-\frac{\alpha-1}{2}}
\varphi_{\frac{c(s-r)}{\alpha}+d(t-s),R}(y)ds.
\end{align*}

Since $r\leq s\leq t$ and $c,d\in\{1,p\}$, we have
\[
\frac{1}{\alpha}(t-r)
\leq
\frac{c(s-r)}{\alpha}+d(t-s)
\leq
p(t-r).
\]
Thus, by the heat-kernel comparison estimate \eqref{sqrt(t over s)},
\[
\varphi_{\frac{c(s-r)}{\alpha}+d(t-s),R}(y)
\lesssim
\varphi_{p(t-r),R}(y).
\]
Therefore, 
\begin{align*}
&\int_r^t\int_{\mathbb R}
[K^{(p)}_{s-r}(y'-y)]^\alpha
[\Phi^{(p)}_{t-s,R}(y')]^\beta
dy'ds \lesssim
\varphi_{p(t-r),R}(y)
\int_r^t
(s-r)^{-\frac{\alpha-1}{2}}
ds \lesssim
\varphi_{p(t-r),R}(y).
\end{align*}
\end{proof}

{\em Proof of Theorem \ref{QCLT}:} We apply Proposition \ref{tara} to $F=F_{R}(t)$. 

We recall that $p \in [1,\frac{3}{2})$ is such that $m_{p}<\infty$ and $m_{2p}<\infty$.

For $\gamma_1,\gamma_2,\gamma_5,\gamma_6$ and $\gamma_7$, each integral over
$Z^2$ will be split into $\{r_2<r_1\}$ and $\{r_2>r_1\}$. This is necessary because the bound for $\lVert D^2_{\xi_1,\xi_2}F_R(t)\rVert_{2p}$ depends on whether $r_1<r_2$ or
$r_2<r_1$. The diagonal case $r_1=r_2$ has zero measure and can be ignored.

\medskip

\textbf{$\bullet$ Estimation of $\gamma_1$.}

\begin{equation}
\label{gamma1}
\gamma_1^p \lesssim R^{-p} \int_{\bf Z}
\left(
I_{1,1}(\xi_1)+I_{1,2}(\xi_1)
\right)^p
\fm(d\xi_1),    
\end{equation}
where
\begin{align*}
I_{1,1}(\xi_1)&:=
\int_{\bf Z} 1_{\{r_2<r_1\}}  
\big\lVert D_{\xi_2}F_R(t)\big\rVert_{2p}
\big\lVert D_{\xi_1,\xi_2}^2F_R(t)\big\rVert_{2p}
\fm(d\xi_2)\\   
&\lesssim
|z_1|m_2
\Phi^{(p)}_{t-r_1,R}(y_1) \int_0^{r_1}
\int_{\mathbb R}
K^{(p)}_{r_1-r_2}(y_1-y_2)
\Phi^{(p)}_{t-r_2,R}(y_2)
dy_2dr_2
\\
&\lesssim
|z_1|m_2
\Phi^{(p)}_{t-r_1,R}(y_1)
\varphi_{p(t+r_1),R}(y_1),
\end{align*}
and
\begin{align*}
I_{1,2}(\xi_1)&:=
\int_{\bf Z} 1_{\{r_1<r_2\}} 
\big\lVert D_{\xi_2}F_R(t)\big\rVert_{2p}
\big\lVert D_{\xi_1,\xi_2}^2F_R(t)\big\rVert_{2p}
\fm(d\xi_2)\\
&\lesssim
|z_1|m_2
\int_{r_1}^t\int_{\mathbb R}
K^{(p)}_{r_2-r_1}(y_2-y_1)
[\Phi^{(p)}_{t-r_2,R}(y_2)]^2
dy_2dr_2
\\
&\lesssim
|z_1|m_2
\varphi_{p(t-r_1),R}(y_1),
\end{align*}
by using of \eqref{DFR-bound}, \eqref{D2FR-bound}, \eqref{past} and \eqref{future}. Therefore,
\begin{equation*}
\gamma_1^p
\lesssim
R^{-p}m_2^p m_p \left(
\int_0^t\int_{\mathbb R}
[\Phi^{(p)}_{t-r_1,R}(y_1)]^p
\varphi_{p(t+r_1),R}^p(y_1)
dy_1dr_1+ \int_0^t\int_{\mathbb R}
\varphi_{p(t-r_1),R}^p(y_1)
dy_1dr_1 \right)   
\end{equation*}
Since $0\leq\varphi_{R,a}\leq 1$, \eqref{varphi-bound} and \eqref{Phi-bound}, we obtain $\gamma_1^p \lesssim R^{-(p-1)}$.

\textbf{$\bullet$  Estimation of $\gamma_2$.}
\begin{equation}
\label{gamma2}
\gamma_2^p
\lesssim
R^{-p}
\int_{\bf Z}
\left(
I_{2,1}(\xi_1)+I_{2,2}(\xi_1)
\right)^p
\fm(d\xi_1),
\end{equation}
where
\begin{align*}
I_{2,1}(\xi_1)&:=
\int_{\bf Z} 1_{\{r_2<r_1\}} 
\big\lVert D_{\xi_1,\xi_2}^2F_R(t)\big\rVert_{2p}^2
\fm(d\xi_2)\\
&\lesssim
|z_1|^2m_2
[\Phi^{(p)}_{t-r_1,R}(y_1)]^2
\int_0^{r_1}\int_{\mathbb R}
[K^{(p)}_{r_1-r_2}(y_1-y_2)]^2
dy_2dr_2
\\
&\lesssim
|z_1|^2m_2
[\Phi^{(p)}_{t-r_1,R}(y_1)]^2
\int_0^{r_1}
(r_1-r_2)^{-\frac12}
dr_2
\\
&\lesssim
|z_1|^2m_2
[\Phi^{(p)}_{t-r_1,R}(y_1)]^2,
\end{align*}
and
\begin{align*}
I_{2,2}(\xi_1)&:=
\int_{\bf Z} 1_{\{r_1<r_2\}} 
\big\lVert D_{\xi_1,\xi_2}^2F_R(t)\big\rVert_{2p}^2
\fm(d\xi_2) \\ 
&\lesssim
|z_1|^2m_2
\int_{r_1}^t\int_{\mathbb R}
[K^{(p)}_{r_2-r_1}(y_2-y_1)]^2
[\Phi^{(p)}_{t-r_2,R}(y_2)]^2
dy_2dr_2
\\
&\lesssim
|z_1|^2m_2
\varphi_{p(t-r_1),R}(y_1).
\end{align*}
by using of \eqref{DFR-bound}, \eqref{D2FR-bound}, \eqref{past} and \eqref{future}. Therefore,
\begin{align*}
\gamma_2^p&\lesssim
R^{-p}m_2^p m_{2p}
\left(
\int_0^t\int_{\mathbb R}
[\Phi^{(p)}_{t-r_1,R}(y_1)]^{2p}
dy_1dr_1
+
\int_0^t\int_{\mathbb R}
\varphi_{p(t-r_1),R}^p(y_1)
dy_1dr_1
\right).  
\end{align*}
By \eqref{varphi-bound} and \eqref{Phi-bound}, we obtain $\gamma_2^p \lesssim R^{-(p-1)}$.

\medskip
\textbf{$\bullet$  Estimation of $\gamma_3$.}
Choose $q=2p-1$. Then $q\in(1,2)$ and $q+1=2p$. 
\begin{align*}
\gamma_3
&\lesssim
R^{-p}
\int_{\bf Z}
\big\lVert D_{\xi}F_R(t)\big\rVert_{2p}^{2p}
\fm(d\xi)\lesssim
R^{-p}m_{2p}
\int_0^t\int_{\mathbb R}
[\Phi^{(p)}_{t-r,R}(y)]^{2p}
dydr
\end{align*}
By \eqref{Phi-bound}, $\gamma_3 \lesssim R^{-(p-1)}\lesssim R^{-\left(1-\frac1p\right)}$ for $R>1$.

\medskip
\textbf{$\bullet$  Estimation of $\gamma_4$.}
\begin{align*}
\gamma_4^p
&\lesssim
R^{-p} \int_{\bf Z} \big\lVert D_{\xi}F_R(t)\big\rVert_{2p}^{2p}
\fm(d\xi)\lesssim
R^{-p}m_{2p}
\int_0^t\int_{\mathbb R}
[\Phi^{(p)}_{t-r,R}(y)]^{2p}
dydr.
\end{align*}
By \eqref{Phi-bound}, $\gamma_4^p \lesssim R^{-(p-1)}$.

\medskip

\textbf{$\bullet$  Estimation of $\gamma_5$.}
\begin{align*}
\gamma_5^p \lesssim R^{-p}m_{2p}^2
\left(
I_{5,1}+I_{5,2}
\right),
\end{align*}
where
\[
I_{5,1}:=
\int_0^t\int_0^{r_1}
\int_{\mathbb R^2}
[K^{(p)}_{r_1-r_2}(y_1-y_2)]^{2p}
[\Phi^{(p)}_{t-r_1,R}(y_1)]^{2p}
dy_1dy_2dr_2dr_1,
\]
and
\[
I_{5,2}:=
\int_0^t\int_{r_1}^{t}
\int_{\mathbb R^2}
[K^{(p)}_{r_2-r_1}(y_2-y_1)]^{2p}
[\Phi^{(p)}_{t-r_2,R}(y_2)]^{2p}
dy_1dy_2dr_2dr_1.
\]
By symmetry, it is enough to estimate $I_{5,1}$. Using \eqref{K-bound} with
$\alpha=2p$ and \eqref{Phi-bound},
\begin{align*}
I_{5,1}
&\lesssim  \int_0^t\int_0^{r_1} \int_{\bR} \left(\int_{\bR} [K^{(p)}_{r_1-r_2}(y_1-y_2)]^{2p} dy_2 \right) [\Phi^{(p)}_{t-r_1,R}(y_1)]^{2p}
dy_1dr_2dr_1\\
&\lesssim
\int_0^t\int_0^{r_1}
(r_1-r_2)^{-\frac{2p-1}{2}}
\left(
\int_{\mathbb R}
[\Phi^{(p)}_{t-r_1,R}(y_1)]^{2p}
dy_1
\right)
dr_2dr_1
\\
&\lesssim
R.
\end{align*}
The last time integral is finite because $p<\frac32$. Hence $I_{5,1}+I_{5,2}\lesssim R$ and $\gamma_5^p
\lesssim R^{-(p-1)}$.

\medskip

\textbf{$\bullet$ Estimation of $\gamma_6$.}
\begin{equation}
    \label{gamma6}
\gamma_6^p
\lesssim
R^{-p}m_{2p}m_p
\left(
I_{6,1}+I_{6,2}
\right),    
\end{equation}
where
\begin{align*}
  I_{6,1}&:=
\int_0^t\int_0^{r_1}
\int_{\mathbb R^2}
[K^{(p)}_{r_1-r_2}(y_1-y_2)]^{p}
[\Phi^{(p)}_{t-r_1,R}(y_1)]^{2p}
dy_1dy_2dr_2dr_1\\
&\lesssim
\int_0^t\int_0^{r_1}
(r_1-r_2)^{-\frac{p-1}{2}}
\left(
\int_{\mathbb R}
[\Phi^{(p)}_{t-r_1,R}(y_1)]^{2p}
dy_1
\right)
dr_2dr_1\\
&\lesssim R,
\end{align*}
and
\begin{align*}
I_{6,2}&:=
\int_0^t\int_{r_1}^{t}
\int_{\mathbb R^2}
[K^{(p)}_{r_2-r_1}(y_2-y_1)]^p
[\Phi^{(p)}_{t-r_2,R}(y_2)]^p
[\Phi^{(p)}_{t-r_1,R}(y_1)]^p
dy_1dy_2dr_2dr_1  \\
&=
\int_0^t\int_{\mathbb R}
[\Phi^{(p)}_{t-r_1,R}(y_1)]^p
\left(
\int_{r_1}^{t}\int_{\mathbb R}
[K^{(p)}_{r_2-r_1}(y_2-y_1)]^p
[\Phi^{(p)}_{t-r_2,R}(y_2)]^p
dy_2dr_2
\right)
dy_1dr_1
\\
&\lesssim
\int_0^t\int_{\mathbb R}
[\Phi^{(p)}_{t-r_1,R}(y_1)]^p
\varphi_{p(t-r_1),R}(y_1)
dy_1dr_1
\\
&\lesssim R,
\end{align*}
by using \eqref{future} with $\alpha=p$ and $\beta=p$, $0\leq\varphi_{R,a}\leq 1$ and
\eqref{Phi-bound}. Hence $I_{6,1}+I_{6,2}\lesssim R$ and $\gamma_6^p
\lesssim R^{-(p-1)}$.

\medskip
\textbf{$\bullet$  Estimation of $\gamma_7$.}
\begin{equation}
    \label{gamma7}
    \gamma_7^p
\lesssim
R^{-p}m_2m_{2p-1}
\left(
I_{7,1}+ I_{7,2}
\right),
\end{equation}
where
\begin{align*}
I_{7,1}
&=
\int_0^t\int_{\mathbb R}
[\Phi^{(p)}_{t-r_2,R}(y_2)]^{2p-2}
\left(
\int_{r_2}^{t}\int_{\mathbb R}
K^{(p)}_{r_1-r_2}(y_1-y_2)
[\Phi^{(p)}_{t-r_1,R}(y_1)]^2
\,dy_1\,dr_1
\right)
dy_2\,dr_2 \\
&\lesssim
\int_0^t\int_{\mathbb R}
[\Phi^{(p)}_{t-r_2,R}(y_2)]^{2p-2}
\varphi_{p(t-r_2),R}(y_2)
\,dy_2\,dr_2 \\
&\leq
\int_0^t\int_{\mathbb R}
[\Phi^{(p)}_{t-r_2,R}(y_2)]^{2p-1}
\,dy_2\,dr_2 \\
&\lesssim R,
\end{align*}
and
\begin{align*}
I_{7,2}&:=
\int_0^t\int_{r_1}^{t}
\int_{\mathbb R^2}
K^{(p)}_{r_2-r_1}(y_2-y_1)
\Phi^{(p)}_{t-r_1,R}(y_1)
[\Phi^{(p)}_{t-r_2,R}(y_2)]^{2p-1}
dy_1dy_2dr_2dr_1\\
&\lesssim
\int_0^t\int_{\mathbb R}
\Phi^{(p)}_{t-r_1,R}(y_1)
\left(
\int_{r_1}^{t}\int_{\mathbb R}
K^{(p)}_{r_2-r_1}(y_2-y_1)
[\Phi^{(p)}_{t-r_2,R}(y_2)]^{2p-1}
dy_2dr_2
\right)
dy_1dr_1
\\
&\lesssim
\int_0^t\int_{\mathbb R}
\Phi^{(p)}_{t-r_1,R}(y_1)
\varphi_{p(t-r_1),R}(y_1)
dy_1dr_1
\\
&\lesssim R.
\end{align*}
For the above estimate of  $I_{7,1}$,  we used \eqref{future} with $\alpha=1$ and $\beta=2$, together with the fact that
$\varphi_{p(t-r_2),R}\leq\Phi^{(p)}_{t-r_2,R}.$. Then we used \eqref{Phi-bound} with $\alpha=2p-1 \in [1,2)$. For $I_{7,2}$,
we used \eqref{past} and \eqref{future}. Hence $I_{7,1}+I_{7,2}\lesssim R$ and $\gamma_7^p
\lesssim R^{-(p-1)}$.

\medskip

Combining the above estimates, we obtain
\[
\gamma_i
\lesssim
R^{-\left(1-\frac1p\right)},
\qquad i=1,\ldots,7.
\]

\subsection{Proof of Theorem \ref{FCLT}}

In this section, we give the proof of Theorem~\ref{FCLT}.

We first give some preliminary estimates.

\begin{lemma}
\label{fclt-lemma1}
For any $R>0$ and $0\leq s<t$,
\[
\int_0^s\int_{\bR}
\left|
\varphi_{t,R}(r,y)-\varphi_{s,R}(r,y) \right|^2dydr \leq C_1 (t-s)^{3/2}, 
\]
where $C_1=\frac{2}{\pi} \int_{\bR}\frac{(1-e^{-\eta^2/2})^2}{\eta^4}d\eta$.
\end{lemma}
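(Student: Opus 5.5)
The plan is to compute everything in Fourier space, as in the proof of Theorem~\ref{ergodic-cov}~b). For fixed $r\in[0,s]$, the function $y\mapsto \varphi_{t,R}(r,y)-\varphi_{s,R}(r,y)$ is $(G_{t-r}-G_{s-r})*1_{[-R,R]}$. Using \eqref{F-phi} with $t$ replaced by $t-r$ and $s-r$, its Fourier transform is
\[
2\Big(e^{-\frac{(t-r)|\xi|^2}{2}}-e^{-\frac{(s-r)|\xi|^2}{2}}\Big)\frac{\sin(R|\xi|)}{|\xi|}
=-2e^{-\frac{(s-r)|\xi|^2}{2}}\Big(1-e^{-\frac{(t-s)|\xi|^2}{2}}\Big)\frac{\sin(R|\xi|)}{|\xi|}.
\]

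By Plancherel's theorem, the $dy$-integral equals
\[
\frac{2}{\pi}\int_{\bR} e^{-(s-r)|\xi|^2}\Big(1-e^{-\frac{(t-s)|\xi|^2}{2}}\Big)^2\frac{\sin^2(R|\xi|)}{|\xi|^2}\,d\xi .
\]
We use $\sin^2\le 1$, which removes all dependence on $R$, and then integrate in $r$ over $[0,s]$ with Fubini. This gives
\[
\int_0^s e^{-(s-r)|\xi|^2}dr=\frac{1-e^{-s|\xi|^2}}{|\xi|^2}\le \frac{1}{|\xi|^2}.
\]
The whole expression is therefore bounded by
\[
\frac{2}{\pi}\int_{\bR}\frac{\big(1-e^{-(t-s)|\xi|^2/2}\big)^2}{|\xi|^4}\,d\xi .
\]

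Finally we substitute $\eta=(t-s)^{1/2}\xi$. Then $d\xi=(t-s)^{-1/2}d\eta$ and $|\xi|^{-4}=(t-s)^2|\eta|^{-4}$, so the bound becomes $(t-s)^{3/2}\frac{2}{\pi}\int_{\bR}\frac{(1-e^{-\eta^2/2})^2}{\eta^4}d\eta=C_1(t-s)^{3/2}$.

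It remains to check that $C_1<\infty$. Near $0$ we have $(1-e^{-\eta^2/2})^2\sim\eta^4/4$, so the integrand is bounded there. At infinity the integrand is at most $\eta^{-4}$, which is integrable.

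The only real obstacle is organizing the computation: one must integrate first in $r$ to absorb one factor $|\xi|^{-2}$ before discarding the oscillatory factor. The justification of Plancherel and Fubini is routine, since all integrands are nonnegative after the bound and each function lies in $L^2(\bR)$.
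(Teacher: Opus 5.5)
Your proposal is correct and follows the paper's proof essentially line by line: Plancherel with \eqref{F-phi}, factoring out $e^{-(s-r)|\xi|^2}$, bounding $\sin^2\le 1$, integrating in $r$ to gain the extra factor $|\xi|^{-2}$, and rescaling with $\eta=(t-s)^{1/2}\xi$. One small correction to your closing remark: the factor that must be kept until after the $r$-integration is $e^{-(s-r)|\xi|^2}$ (dropping it first would only give $s(t-s)^{1/2}$), not the oscillatory factor, which you in fact discard first.
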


\begin{proof} By Plancherel's theorem and \eqref{F-phi},
\begin{align*}
 \int_0^s\int_{\bR} \left|   \varphi_{t,R}(r,y)-\varphi_{s,R}(r,y)
\right|^2dydr & =\frac{2}{\pi} \int_0^s\int_{\bR} \left(e^{-(t-r)\xi^2/2}
-e^{-(s-r)\xi^2/2}\right)^2 \frac{\sin^2(R|\xi|)}{|\xi|^2} d\xi dr\\
& \leq \frac{2}{\pi}\int_{\bR}
 \frac{(1-e^{-(t-s)\xi^2/2})^2}{|\xi|^2}
\left(\int_0^s e^{-(s-r)\xi^2}dr\right)d\xi \\
& \leq \frac{2}{\pi} \int_{\bR}
 \frac{(1-e^{-(t-s)\xi^2/2})^2}{|\xi|^4}d\xi\\
 &=\frac{2(t-s)^{3/2}}{\pi} \int_{\bR}\frac{(1-e^{-\eta^2/2})^2}{|\eta|^4} d\eta,
\end{align*}
where for the last equality, we used the change of variable $\eta=\sqrt{t-s}\xi$.
The last integral is finite since the integrand is bounded near $0$ (since $1-e^{-x} \leq x$ for $x>0$), and behaves like $|\eta|^{-4}$ at infinity. 
\end{proof}

\begin{lemma}
\label{fclt-lemma2}
For every $R>0$ and $t>0$,
\[
\|\varphi_{t,R}-\mathbf 1_{[-R,R]}\|_{L^2(\bR)}^2 \leq C_2 t^{1/2},
\]
where $C_2=\frac{2}{\pi} \int_{\bR}\frac{(1-e^{-\eta^2/2})^2}{\eta^2}d\eta$.
\end{lemma}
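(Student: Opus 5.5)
We argue exactly as in the proof of Lemma~\ref{fclt-lemma1}, through Plancherel's theorem and the explicit Fourier transform \eqref{F-phi}. The Fourier transform of $\mathbf 1_{[-R,R]}$ is $2\sin(R|\xi|)/|\xi|$. Combined with \eqref{F-phi}, this gives
\[
\cF\big(\varphi_{t,R}-\mathbf 1_{[-R,R]}\big)(\xi)=2\big(e^{-t\xi^2/2}-1\big)\frac{\sin(R|\xi|)}{|\xi|}.
\]
With the normalization $\cF\varphi(\xi)=\int e^{-i\xi x}\varphi(x)dx$, Plancherel's theorem reads $\|f\|_{L^2}^2=\frac{1}{2\pi}\|\cF f\|_{L^2}^2$. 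Therefore
\[
\|\varphi_{t,R}-\mathbf 1_{[-R,R]}\|_{L^2(\bR)}^2=\frac{2}{\pi}\int_{\bR}\big(1-e^{-t\xi^2/2}\big)^2\frac{\sin^2(R|\xi|)}{|\xi|^2}\,d\xi .
\]

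Next we bound $\sin^2(R|\xi|)\le 1$, which removes all dependence on $R$. The change of variable $\eta=\sqrt t\,\xi$ then gives $d\xi=t^{-1/2}d\eta$ and $|\xi|^{-2}=t\,\eta^{-2}$. Hence the right-hand side is at most
\[
\frac{2}{\pi}\,t^{1/2}\int_{\bR}\frac{(1-e^{-\eta^2/2})^2}{\eta^2}\,d\eta=C_2\,t^{1/2}.
\]

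It remains to check that $C_2<\infty$. Near $0$, the bound $1-e^{-x}\le x$ shows that the integrand is at most $\eta^2/4$, so it is bounded. At infinity, the integrand is at most $\eta^{-2}$, which is integrable. There is no real obstacle in this argument. The only point needing care is the Plancherel constant: it must be $\frac{1}{2\pi}$ so that it matches the factor $\frac{2}{\pi}$ appearing in $C_2$, which is the same constant convention used in Lemma~\ref{fclt-lemma1}.
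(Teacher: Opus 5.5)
Your proposal is correct and is essentially the paper's own proof: Plancherel with the transform \eqref{F-phi}, the bound $\sin^2(R|\xi|)\le 1$, and the substitution $\eta=\sqrt{t}\,\xi$. Your constant bookkeeping ($\frac{1}{2\pi}\cdot 4=\frac{2}{\pi}$) and your check that $C_2<\infty$ (integrand at most $\eta^2/4$ near $0$ and at most $\eta^{-2}$ at infinity) are both accurate.
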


\begin{proof}
By Plancherel's theorem and \eqref{F-phi},
\begin{align*}
\int_{\bR} \left|   \varphi_{t,R}(y)-1_{[-R,R]}(y)\right|^2dy & =\frac{2}{\pi}\int_{\bR} \Big(1-e^{-\frac{t|\xi|^2}{2}}\Big)^2 \frac{\sin^2(R|\xi|)}{|\xi|^2} d\xi \\
& \leq \frac{2}{\pi} \int_{\bR}
 \frac{(1-e^{-t|\xi|^2/2})^2}{|\xi|^2} d\xi
 \\
 &=\frac{2t^{1/2}}{\pi} \int_{\bR}\frac{(1-e^{-\eta^2/2})^2}{|\eta|^2} d\eta,
\end{align*}
where for the last equality, we used the change of variable $\eta=\sqrt{t}\xi$.
The last integral is finite since the integrand is bounded near $0$ (since $(1-e^{-x})^2 \leq 1-e^{-x} \leq x$ for $x>0$), and behaves like $|\eta|^{-2}$ at infinity.

\end{proof}

{\em Proof of  Theorem \ref{FCLT}:} {\em Step 1.} (Existence of c\`adl\`ag modification)
By stochastic Fubini Theorem,
\begin{equation}
\label{FR-decomp}
F_R(t)
=
\int_0^t\int_{\bR}
\varphi_{t,R}(s,y)u(s,y)L(ds,dy):=C_R(t)+M_R(t),
\end{equation}
where
\begin{align*}
C_R(t) &=
\int_0^t\int_{\bR}
\left(\varphi_{t,R}(s,y)-\mathbf 1_{[-R,R]}(y)\right) u(s,y)L(ds,dy), \\
M_R(t) &=
\int_0^t\int_{\bR}
\mathbf 1_{[-R,R]}(y)u(s,y)L(ds,dy).
\end{align*}

Let $0\leq s<t\leq T$. By It\^o's isometry and \eqref{mom-u},
\[
\begin{aligned}
\bE|C_R(t)-C_R(s)|^2 & \les 
\int_0^s\int_{\bR}
\left|
\varphi_{t,R}(r,y)-\varphi_{s,R}(r,y)
\right|^2  dydr  \\
&+ \int_s^t\int_{\bR}
\left|
\varphi_{t,R}(r,y)-\mathbf 1_{[-R,R]}(y)
\right|^2 dydr .
\end{aligned}
\]
By Lemma \ref{fclt-lemma1}  the first term is bounded by
$|t-s|^{3/2}$. By Lemma \ref{fclt-lemma2},
\[
\begin{aligned}
\int_s^t\int_{\bR}
\left|
\varphi_{t,R}(r,y)-\mathbf 1_{[-R,R]}(y)
\right|^2dydr
&=
\int_0^{t-s}
\|\varphi_{r,R}-\mathbf 1_{[-R,R]}\|_{L^2(\bR)}^2dr  \\
&\lesssim
\int_0^{t-s}r^{1/2}dr
\lesssim |t-s|^{3/2}.
\end{aligned}
\]
Therefore,
\begin{equation}
 \label{C_R}
\mathbb E|C_R(t)-C_R(s)|^2
\lesssim |t-s|^{3/2}.   
\end{equation}
By Kolmogorov-Chentsov criterion (Theorem 4.23 of \cite{kallenberg21}), $C_R$ has a
continuous modification. On the other hand, the process $M_R$ is a square-integrable stochastically continuous martingale. By Doob's Regularity Theorem (Theorem 3.40 of \cite{PZ07}), $M_R$ has a c\`adl\`ag modification. Consequently, $F_R=C_R+M_R$ has a c\`adl\`ag modification.

\medskip

{\em Step 2. (finite dimensional convergence)} In this step, we show that for any integer $m\geq 1$ and for any $t_1,\ldots,t_m>0$,
\[
\Big(\frac{1}{R^{1/2}} F_R(t_1),\ldots, \frac{1}{R^{1/2}}F_R(t_m)  \Big)
\stackrel{d}{\to} \Big(\cG(t_1),\ldots, \cG(t_m) \Big) \quad \mbox{as $R \to \infty$}.
\]
By Cram\'er-Wold theorem, this is equivalent to showing that for any $b_1,\ldots,b_m \in \bR$,
\[
X_R:=\frac{1}{R^{1/2}}\sum_{j=1}^{m}b_jF_R(t_j) \stackrel{d}{\to}X:=\sum_{j=1}^{m}b_j \cG(t_j) 
\quad \mbox{as $R \to \infty$}.
\]

As on page 4214 of \cite{BZ24}, it is enough to prove that $X_R/\tau_R \stackrel{d}{\to} Z$ as $R \to \infty$, where $\tau_R^2={\rm Var}(X_R) \to \tau^2={\rm Var}(X)$ and $Z \sim N(0,1)$. For this, we apply Proposition \ref{tara}:
\begin{equation}
d_{\mathrm{W}}\left(\frac{X_R}{\tau_R}, \mathcal{N}(0,1)\right)
\leq \gamma_1 + \gamma_2 + \gamma_3,
\end{equation}
where $\gamma_1$, $\gamma_2$, and $\gamma_3$ are defined as in \eqref{tara} with $F = X_R$.
Using the same method as in the proof of Theorem \ref{QCLT}, it can be proved that
$\gamma_i^p \lesssim R^{-(p-1)}$ for $i=1,2,3$.

\medskip
{\em Step 3. (Tightness).} From {\em Step 1}, for any $R>1$ and $ 0\leq s<t\leq T$, we have 
\[
\mathbb E\left|
\frac{C_R(t)-C_R(s)}{\sqrt R}
\right|^2
\lesssim |t-s|^{3/2}.
\]

By Theorem 23.7 of \cite{kallenberg21}, $\{R^{-1/2}C_R\}_{R\geq1}$ is tight in $C[0,T]$, as $C_R(0) = 0$ for any $R > 1$. Hence, $\{R^{-1/2}C_R\}_{R\geq1}$  is tight in $(D[0,T],U)$ where $U$ is the uniform topology. Since $U$ is stronger than $J_1$, $\{R^{-1/2}C_R\}_{R\geq1}$ is also tight in $(D[0,T],J_1)$. 

Next, we will prove that $\{R^{-1/2}M_R\}_{R\geq1}$ is tight in $(D[0,T],J_1)$. By Theorem 13.5 of \cite{billingsley99}, it is enough to prove that there exist $\beta>0$, $\alpha>\frac{1}{2}$ such that for all $0\leq r\le s\le t\leq T$,
\begin{equation}
\label{tight-related}
    \mathbb E\left[
\left|
\frac{M_R(s)-M_R(r)}{\sqrt R}
\right|^{2\beta}
\left|
\frac{M_R(t)-M_R(s)}{\sqrt R}
\right|^{2\beta}
\right]
\lesssim |t-r|^{2\alpha}.
\end{equation}

Recall that by hypothesis, $p\in(1,\frac32)$ is such that $m_p<\infty$ and $m_{2p}<\infty$. We set
\(\beta=\frac p2\) and \(\alpha=\frac{p+1}{4}>\frac{1}{2}\).
By Proposition \ref{ros1}, 
for any
\(0\le s\le t\le T\),
\begin{align}
\label{M_R-1}
 \bE|M_R(t)-M_R(s)|^{2p}
&\lesssim
\bE\left(
\int_s^t\int_{-R}^{R}|u(\theta,y)|^2dyd\theta
\right)^p
+
\bE\int_s^t\int_{-R}^{R}|u(\theta,y)|^{2p}dyd\theta .
\end{align}
Since \(p>1\), by H\"older's inequality, 
\begin{align}
\label{M_R-2}
\bE\left(
\int_s^t\int_{-R}^{R}|u(\theta,y)|^2dyd\theta
\right)^p
&\lesssim (t-s)^{p-1}(2R)^{p-1}
\bE\int_s^t\int_{-R}^{R}|u(\theta,y)|^{2p}dyd\theta .
\end{align}
{For the second term, by Fubini's theorem and}
\eqref{u-pth-moment}, we have
\begin{equation}
\label{M_R-3}
 \bE
\int_s^t\int_{-R}^{R}|u(\theta,y)|^{2p}dyd\theta
\lesssim R(t-s).
\end{equation}
{Combining} \eqref{M_R-1}, \eqref{M_R-2}, and
\eqref{M_R-3}, we obtain
\begin{equation}
    \label{M_R}
\bE|M_R(t)-M_R(s)|^{2p}
\lesssim R^p(t-s)^p+R(t-s).
\end{equation}

{Now let \(0\le r\le s\le t\le T\).}
Using Jensen's inequality for conditional expectation for the concave function $\varphi(x)=x^{p/2},x>0$, and the fact that $M_R$ is a martingale whose predictable quadratic variation is $\langle M_R \rangle(t) =m_2 \int_0^t \int_{-R}^R  |u(\theta,y)|^2 dy d\theta$, we have:
\begin{align}
\nonumber
\bE \left[  \left| M_R(t)-M_R(s) \right|^p \bigg| \cF_s  \right] 
&\leq \left( \bE \left[  \big(M_R(t)-M_R(s) \big)^2 \bigg| \cF_s  \right] \right)^{p/2}\\
\nonumber
& = \left(\bE \bigg[M_R^2(t)-M_R^2(s)|\cF_s\bigg] \right)^{p/2}=\left(\bE \bigg[\langle M_R\rangle(t)-\langle M_R\rangle(s)|\cF_s\bigg] \right)^{p/2}\\
\label{cond-bound}
&=
\left( m_2
\bE\left[
\int_s^t\int_{-R}^{R}|u(\theta,y)|^2dyd\theta
\,\bigg|\,\cF_s
\right]
\right)^{p/2}.
\end{align}

Conditioning on \(\cF_s\), and then using the Cauchy-Schwarz inequality, we have:
\begin{align*}
 &\qquad \bE\left[
|M_R(s)-M_R(r)|^p |M_R(t)-M_R(s)|^p
\right]  \\
&=
\bE\left[
|M_R(s)-M_R(r)|^p
\bE\left[ \left|M_R(t)-M_R(s)
\right|^p
\bigg|\cF_s
\right]
\right]\\
&\le \left(\bE|M_R(s)-M_R(r)|^{2p}\right)^{1/2}
\left\{
\bE \left[ \left(
\bE\left[ \left|M_R(t)-M_R(s)
\right|^p
\bigg|\cF_s
\right]
\right)^2 \right]
\right\}^{1/2}.
\end{align*}

By \eqref{cond-bound}, and Jensen's inequality for conditional expectation,
\begin{align*}
\bE\left[ \left(
\bE\left[ \left|M_R(t)-M_R(s)
\right|^p
\bigg|\cF_s
\right]
\right)^2 \right]
&\lesssim
\bE \left[ \left(
\bE\left[
\int_s^t\int_{-R}^{R}|u(\theta,y)|^2dyd\theta
\,\bigg|\,\cF_s
\right]
\right)^p \right] \\
& \les \bE\left[ \left(
\int_s^t\int_{-R}^{R}|u(\theta,y)|^2dyd\theta
\right)^p
\right]\\
&\lesssim R^p(t-s)^p,
\end{align*}
where the last inequality {follows from} \eqref{M_R-2} and \eqref{M_R-3}.
Therefore,
\begin{align*}
    &\qquad \bE\left[
|M_R(s)-M_R(r)|^p |M_R(t)-M_R(s)|^p
\right] \\
&\lesssim
\left(R^{p/2}(s-r)^{p/2}+R^{1/2}(s-r)^{1/2}\right)
R^{p/2}(t-s)^{p/2}.
\end{align*}
Dividing by \(R^p\), and using \(R\ge1\) and \(p>1\), we obtain
\[
\begin{aligned}
&\bE\left[
\left|
\frac{M_R(s)-M_R(r)}{\sqrt R}
\right|^p
\left|
\frac{M_R(t)-M_R(s)}{\sqrt R}
\right|^p
\right]  \\
&\quad \lesssim
(s-r)^{p/2}(t-s)^{p/2}
+
(s-r)^{1/2}(t-s)^{p/2}
\lesssim |t-r|^{(p+1)/2}.
\end{aligned}
\]
Since \(p=2\beta\) and \((p+1)/2=2\alpha\), this proves
\eqref{tight-related}.

\medskip
By Theorem~4.1 of
\cite{W80}, the addition map $(x,y)\mapsto x+y$ on $(D[0,T],J_1)\times(D[0,T],J_1)$ is continuous at every pair of
functions with no common discontinuities. Hence its restriction to
$C[0,T]\times(D[0,T],J_1)$ is continuous. Since $\{R^{-1/2}C_R\}_{R\geq1}$ is tight in $C[0,T]$ and $\{R^{-1/2}M_R\}_{R\geq1}$ is
tight in $(D[0,T],J_1)$, the sum  $\{R^{-1/2}F_R\}_{R\geq1}$ is tight in $(D[0,T],J_1)$.

\medskip

{\em Step 4.} Theorem 13.1 of \cite{billingsley99}, combined with Steps 2 and 3, shows that 
\[
\frac{1}{\sqrt R}F_R(\cdot)
\xrightarrow{d}
\cG(\cdot) \quad \mbox{in $(D[0,T],J_1)$ as $R \to \infty$}.
\]
By Theorem 23.9.(iii) of \cite{kallenberg21}, this convergence holds also in $(D[0,T],U)$, since $\cG$ is continuous. This concludes the proof of Theorem \ref{FCLT}.

\bigskip

{\em Acknowledgment.} The authors would like to thank Guangqu Zheng for reading the manuscript and providing useful comments.

\end{document}